\documentclass[11pt]{amsart}

\usepackage[a4paper,margin=1in]{geometry}
\usepackage{amssymb}
\usepackage{mathrsfs}
\usepackage{xcolor}
\usepackage{hyperref}
\usepackage{amsmath,amssymb,graphicx}

\newcommand{\rightovernotleft}{%
  \mathrel{\vcenter{\offinterlineskip
    \halign{\hfil$##$\hfil\cr
      \rightharpoonup\cr
      \noalign{\kern-0.35ex}
      \ooalign{%
        $\leftharpoondown$\cr
        \hidewidth\raisebox{0.2ex}{\scalebox{0.4}{$~\mathbf{/}$}}\hidewidth\cr
      }\cr
    }%
  }}%
}

\newcommand{\rightoverquesmarkleft}{%
  \mathrel{\vcenter{\offinterlineskip
    \halign{\hfil$##$\hfil\cr
      \rightharpoonup\cr
      \noalign{\kern-0.35ex}
      \ooalign{%
        $\leftharpoondown$\cr
        \hidewidth\raisebox{-0.45ex}{\scalebox{0.55}{~?}}\hidewidth\cr
      }\cr
    }%
  }}%
}

\newtheorem{theorem}{Theorem}[section]
\newtheorem{lemma}{Lemma}[section]
\newtheorem{proposition}{Proposition}[section]
\newtheorem{corollary}{Corollary}[section]
\newtheorem{claim}{\quad Claim}
\theoremstyle{definition}

\newtheorem{question}{Question}

\theoremstyle{remark}
\newtheorem{remark}{Remark}

\allowdisplaybreaks[2]

\newcommand{\N}{\mathbb{N}}
\newcommand{\Zp}{\mathbb{N}_{0}}
\newcommand{\Fps}{\mathscr{F}_{\mathrm{ps}}}
\newcommand{\Fs}{\mathscr{F}_{\mathrm{s}}}
\newcommand{\Ft}{\mathscr{F}_{\mathrm{t}}}
\newcommand{\Fpubd}{\mathscr{F}_{\mathrm{pubd}}}
\newcommand{\Finf}{\mathscr{F}_{\mathrm{inf}}}
\newcommand{\PRzero}{\text{-}\mathrm{PR}_{0}}
\newcommand{\orb}{\operatorname{Orb}^{+}}

\newcommand{\Mzero}{{M}_{0}}
\newcommand{\Ezero}{{E}_{0}}
\newcommand{\Lang}{\mathcal{L}}
\newcommand{\Sub}{\operatorname{Sub}}
\newcommand{\Cyl}[1]{[#1]}

\newcommand{\Ascr}{H}

\begin{document}

\title[Disjointness and \(\mathscr{F}\)-PR with respect to zero entropy systems]
{On disjointness and \(\mathscr{F}\)-product recurrence with respect to
zero entropy systems}

\author[X. Wu]{Xinxing Wu}
\address{School of Mathematics and Statistics, Guizhou University of
Finance and Economics, Guiyang, Guizhou 550025, China}
\email{wuxinxing5201314@163.com}

\subjclass[2020]{Primary 37B20; Secondary 37B05, 37B40, 37B10}
\keywords{Product recurrence, disjointness, Furstenberg family,
topological entropy, minimality}

\begin{abstract}
This paper studies disjointness and \(\mathscr{F}\)-product recurrence
with respect to zero-entropy systems. First, we prove that, for a point
whose orbit closure has zero topological entropy,
\[
\text{distality}\rightleftharpoons
\Finf\PRzero\rightleftharpoons \Fpubd\PRzero
\rightleftharpoons \Fps\PRzero \rightovernotleft
\Fs\PRzero.
\]
We also construct a minimal topological dynamical system with uniform
positive entropy which is disjoint from all zero-entropy \(M\)-systems
but is not disjoint from some zero-entropy \(E\)-system, i.e.,
\(\Ezero^{\perp}\subsetneq \Mzero^{\perp}\), giving an affirmative
answer to a question of \cite[W.~Huang, K.~K. Park and X.~Ye,
Topological disjointness from entropy zero systems,
Bull. Soc. Math. France \textbf{135} (2007), 259--282]{HPY}.
Moreover, the same construction shows that there exists an
\(\Fps\PRzero\) point which is not \(\Fpubd\PRzero\), i.e.,
$\Fps\PRzero$ $\nRightarrow \Fpubd\PRzero$,
giving a negative answer to a question
of \cite[P.~Oprocha and G.~H. Zhang,
On weak product recurrence and synchronization of return
times, Adv. Math. \textbf{244} (2013), 395--412]{OZ}.
\end{abstract}

\maketitle

\section{Introduction}

Furstenberg~\cite{Furstenberg} introduced disjointness in ergodic
theory and topological dynamics and used it to characterize classes of
processes and flows by their disjointness relations. In the topological
setting, he proved that every totally transitive system with dense
periodic points is disjoint from all minimal systems and that every
weakly mixing system is disjoint from all minimal distal systems.
He also asked~\cite[Problem~G]{Furstenberg} for
descriptions of the classes of systems
disjoint from all minimal systems and from all distal systems.

A \textit{topological dynamical system} (TDS for short) is a pair
\((X,f)\), where \((X,d)\) is a nontrivial compact metric space and
\(f\colon X\to X\) is a continuous map. According to
Furstenberg~\cite{Furstenberg}, two TDSs \((X,f)\)
and \((Y,g)\) are \textit{disjoint} if their only joining is
\(X\times Y\), where a \textit{joining} is a nonempty closed invariant
subset of \(X\times Y\) projecting onto each coordinate space.
For a class \(\mathscr{X}\) of TDSs, write \(\mathscr{X}^{\perp}\)
for the class of TDSs disjoint from all members of \(\mathscr{X}\).
Let \(\mathscr{D}\), \(\mathscr{M}\), \(M\), and \(E\) denote the
classes of distal systems, minimal systems,
transitive systems with dense minimal points, and transitive systems
admitting an invariant Borel probability measure with full support,
respectively. The zero-entropy subclasses of $\mathscr{M}$, \(M\),
and \(E\) are denoted by $\mathscr{M}_0$, \(\Mzero\), and \(\Ezero\),
respectively.

Petersen~\cite{Petersen} settled the latter question by proving that
$\mathscr{D}^{\perp}$ consists precisely of the weakly mixing minimal
systems. For the former question concerning $\mathscr{M}^{\perp}$,
Huang and Ye~\cite{HY} proved that every TDS in
\(\mathscr{M}^{\perp}\) has dense minimal points, that every transitive
system in \(\mathscr{M}^{\perp}\) is weakly mixing, and that every
weakly mixing system with dense regular minimal points belongs to
\(\mathscr{M}^{\perp}\). They also constructed transitive systems
having no periodic points and a distal system in
\(\mathscr{M}^{\perp}\). Oprocha~\cite{Oprocha2010} stated
that every weakly mixing system with dense
distal points belongs to \(\mathscr M^{\perp}\) (see also~\cite{DSY}).
Dong, Shao and Ye~\cite{DSY} showed that every weakly mixing $M$-system is
disjoint from all minimal PI systems.
Oprocha~\cite{Oprocha2019} gave
a sufficient criterion for a TDS to belong to \(\mathscr{M}^{\perp}\) and
constructed a transitive system $(X, f)$ in
\(\mathscr{M}^{\perp}\) such that the return times set
$N_{f}(x, U)$ is not an \(\mathrm{IP}^{*}\)-set for
some nonempty open set $U$ of $X$ and every $x\in X$. Huang, Shao and
Ye~\cite{HSY} established the converse of Oprocha's sufficient criterion
\cite{Oprocha2019} in a stronger uniform form: a transitive system
is in \(\mathscr{M}^{\perp}\) if and only if it is weakly mixing and
there exists some countable dense subset $D$ of $X$
consisting of minimal points such that for
any minimal system $(Y, g)$, any point $y\in Y$ and any open
neighbourhood $V$ of $y$, and for any nonempty open subset
$U$ of $X$, there exists $x\in D\cap U$ such that
$\{n\in \Zp: f^{n}(x)\in U, g^{n}(y)\in V\}$ is syndetic.
As applications of this characterization, they further proved that finite Cartesian
powers and positive iterates of a transitive system in
\(\mathscr{M}^{\perp}\) remain in \(\mathscr{M}^{\perp}\),
and that a transitive system belongs to \(\mathscr{M}^{\perp}\) if and
only if its hyperspace system does. Then, Huang et al.~\cite{HSXY}
gave an intrinsic characterization for arbitrary systems:
\((X,f)\in\mathscr{M}^{\perp}\) if and only if \(X\) contains minimal
subsystems \((G_n, f|_{G_n})_{n\in \N}\) such that
$\overline{\bigcup_{n\in \N}G_{n}}=X$ and
\((G_n, f|_{G_n})\perp (X, f)\) for every \(n\in\N\). Recently,
Guo et al.~\cite{GQXY} also characterized the transitive systems disjoint
from all totally minimal systems.

For the zero-entropy subclasses,
Blanchard~\cite{Blanchard} proved that every diagonal system
belongs to \(\mathscr{M}_0^\perp\). Huang, Park and Ye~\cite{HPY}
considered the classes \(\Mzero\) and \(\Ezero\). In particular, they proved that every
minimal diagonal system belongs to \(\Mzero^\perp\), while every TDS
in \(\Mzero^\perp\) is minimal and has c.p.e. Moreover, every TDS
in \(\Ezero^\perp\) is minimal and has c.p.e., and a
minimal system belongs to \(\Ezero^\perp\) if each of its invariant
measures defines a measure-theoretic \(K\)-system. They further
characterized the transitive systems in $\mathscr{M}_0^{\perp}$
by introducing the notion of $zm$-sets. Clearly
\(\mathscr{M}_0 \subseteq \Mzero \subseteq \Ezero\)
and \(\Ezero^\perp \subseteq \Mzero^\perp \subseteq \mathscr{M}_0^\perp\).
Huang, Park and Ye~\cite{HPY} further showed that
\(\Mzero^\perp\subsetneq\mathscr{M}_0^\perp\) and
naturally asked whether \(\Ezero^\perp
\subseteq\Mzero^\perp\) is strict.
Dong, Shao and Ye~\cite{DSY} later stated the same
question as open:
\begin{question}[{\cite{HPY,DSY}}]
\label{ques:disjointness}
Is there a TDS in
\(\Mzero^{\perp}\setminus\Ezero^{\perp}\)?
\end{question}

Our first main result gives an affirmative answer
to Question~\ref{ques:disjointness}.

\begin{theorem}
\label{thm:disjointness}
There exists a minimal system in
$\Mzero^{\perp}\setminus\Ezero^{\perp}$
which has u.p.e. In particular,
\(\Ezero^{\perp}\subsetneq\Mzero^{\perp}
\subsetneq\mathscr{M}_0^\perp\).
\end{theorem}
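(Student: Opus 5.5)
The strategy is to build a minimal subshift $(X,\sigma)$ with u.p.e. that lies in $\Mzero^{\perp}$ and is not disjoint from some $(Y,\sigma)\in\Ezero$. The strict inclusion $\Mzero^\perp\subsetneq\mathscr{M}_0^\perp$ is already known from \cite{HPY}, and $\Ezero^\perp\subseteq\Mzero^\perp$ is trivial. So the ``in particular'' part follows once we exhibit such an $X$.

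\textbf{Construction.} I would take a Toeplitz-like or hierarchical block construction $X=\overline{\orb(x)}$ over $\{0,1\}$, defined through words $B_{n+1}$ concatenated from $B_n$-type words.

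\emph{Entropy and minimality.} At each stage, free concatenations of all words in a family $\mathcal{W}_n$ should appear. This forces u.p.e., via the standard criterion that every pair of cylinders is visited ``independently'' along a positive-density set. Requiring each $B_{n+1}$ to contain every $B_n$-word at bounded gaps gives minimality.

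\emph{Non-disjointness from $\Ezero$.} Additionally, insert long runs of a fixed pattern (say $0^{\ell_n}$ blocks, or copies of a fixed word) at positions $n\in S$. Here $S$ should be a piecewise syndetic but not syndetic set, of positive upper Banach density but zero upper density. The plan is then:
\begin{enumerate}
\item Let $Y$ be the orbit closure of $1_S$ (with a suitable marker), chosen to be transitive with full-support invariant measure and zero entropy. Zero entropy comes from the sparse structure; the full-support measure comes from the positive Banach density.
\item Observe that $Y$ is not an $M$-system because $S$ is not syndetic, so minimal points are not dense.
\item Show that the joining $\overline{\orb(x,1_S)}$ is proper. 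This holds because whenever the second coordinate reads $1$, the first coordinate is forced into a fixed cylinder.
\end{enumerate}

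\textbf{Disjointness from $\Mzero$.} This is the main obstacle. I would use the Huang--Park--Ye characterization of transitive systems in $\mathscr{M}_0^\perp$ via $zm$-sets, adapted to $M_0$. Equivalently, a minimal $X$ is in $\Mzero^\perp$ iff for every $(Y,g)\in\Mzero$ and transitive point $y$, the pair $(x,y)$ is transitive in $X\times Y$ for suitable $x$.

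The key point is that return times of zero-entropy $M$-systems are ``piecewise syndetic along a thick skeleton'' (dense minimal points). The forced pattern in $X$ occurs only along the zero-upper-density set $S$. The free positions of $X$, in contrast, realize arbitrary words along any set of positive upper Banach density that is syndetic inside minimal pieces.

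So I would show the following. For open $U_1,\dots,U_k\subseteq X$ and $V\subseteq Y$, the set $N(y,V)$ contains, near a minimal point, a syndetic-in-a-block pattern. The zero entropy of $Y$ means the sequence of visits is essentially predictable. The free blocks of $X$ then have enough room, by a counting argument comparing $e^{o(n)}$ patterns against $e^{cn}$ free choices, to match any prescribed $U_i$ simultaneously. This step requires choosing the free-block lengths growing fast relative to the pattern-block lengths $\ell_n$, and it is where the delicate bookkeeping lies.
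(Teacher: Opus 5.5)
Your architecture matches the paper's: a minimal u.p.e.\ subshift $X$, a zero-entropy $E$-system $Y=\overline{\orb(1_S,\sigma)}$, and a proper joining because $\sigma^n(x)$ lies in a fixed clopen set $C\subsetneq X$ for every $n\in S$. But your requirement that $S$ be piecewise syndetic makes the plan self-contradictory.

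Suppose $S\in\Fps$. Then $Y$ contains a minimal point $y^{*}$ with $y^{*}_0=1$: take a limit of $\sigma^{s_N}(1_S)$ along long intervals on which $S$ has gaps at most $L$, pass to a minimal point of its orbit closure (which still has gaps at most $L$), and shift it to read $1$ at coordinate $0$. Then $M=\overline{\orb(y^{*},\sigma)}$ is minimal with zero entropy. Put $J=\overline{\orb((x,1_S),\sigma\times\sigma)}$. The set $J\cap(X\times M)$ is a joining of $X$ and $M$: its second projection is $M$ because $\mathrm{Proj}_2(J)=Y\supseteq M$, and its first is $X$ by minimality. Since $X\in\Mzero^{\perp}$ (or trivially, if $M$ is a point), it must equal $X\times M$. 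Yet it misses the nonempty open set $(X\setminus C)\times(\Cyl{1}\cap M)$.

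So the set along which you force the pattern must \emph{fail} to be piecewise syndetic; equivalently, the region where $X$ is free must be thickly syndetic. This is exactly the paper's choice $H=\Zp\setminus G$ with $H\notin\Fps$ (Lemma~\ref{lem:H-not-ps}). Moreover $H$ has lower density at least $\frac12$, so sparseness of the forced set is beside the point. For the same reason your step~2 is a non sequitur: non-syndeticity of $S$ only says the transitive point is not minimal. What shows $Y\notin\Mzero$ is $N_{\sigma|_Y}(a,\Cyl{1}_Y)\notin\Fps$ together with Lemma~\ref{lem:ME}\textup{(i)}. In any case $Y\notin\Mzero$ is forced by $X\in\Mzero^{\perp}$ and $X\not\perp Y$.

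The other claims about $Y$ are also unsupported. Positive upper Banach density of $S$ only gives an invariant measure charging $\Cyl{1}$. Full support needs every return set $N_\sigma(y,\Cyl{y_0\cdots y_{r-1}})$ of the transitive point to lie in $\Fpubd$ (Lemma~\ref{lem:ME}\textup{(ii)}); the paper verifies this in Lemma~\ref{lem:a-recurrent} via the sets $R^r\subseteq q_{K_0}\Zp$ of positive lower density. Likewise, once $BD^*(S)>0$, sparseness yields no entropy bound. The paper gets $p_a(n)\le 64n^3$ from the hierarchical periodic structure of $G$ with $q_{k+1}=2q_k^2$ (Lemma~\ref{lem:zero-entropy}).

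On the other hand, the step you call the main obstacle is immediate. U.p.e.\ implies diagonal, and minimal diagonal systems lie in $\Mzero^{\perp}$ by Lemma~\ref{thm:HPY}. So no $zm$-set or counting argument is needed, and the one you sketch is not a proof. The real work is building a minimal u.p.e.\ $X$ whose generating point $\hat x$ vanishes on $H\setminus\{0\}$. In Proposition~\ref{prop:block-construction}, the concatenation $K_m$ of all current words, including the free family $\mathcal{V}_m$, is inserted into $P_{m+1}$ only in the window $G_{k_{m+1},0}$. Conditions (C3)--(C4) then keep every $1$ of $\hat x$ inside $G$, while (C6)--(C7) provide independence sets $I_{\pi,m}$ of density bounded below, which gives u.p.e.
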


Recurrence is another central issue in the study of topological dynamics.
Regarding ``recurrence in pairs'', which was later called product
recurrence by Auslander and Furstenberg~\cite{AuslanderFurstenberg},
Furstenberg~\cite[Theorem~9.11]{FurstenbergBook}
proved that a point is product recurrent if and only if it is distal
if and only if it is \(\mathrm{IP}^{*}\)-recurrent. Following
Auslander and Furstenberg \cite{AuslanderFurstenberg}, a point $x$ in a TDS is said to be product
recurrent if, for every recurrent point $y$ in any TDS,
$(x, y)$ is recurrent under the product system.
Auslander and Furstenberg~\cite{AuslanderFurstenberg}
extended the equivalence of product recurrence and
distality to more general semigroup actions
and asked whether recurrence in pairs with every minimal point still
forces distality. A point with this property is called weakly
product recurrent~\cite{HaddadOtt}, or equivalently,
\(\Fs\)-product recurrent~\cite{DSY}.
Haddad and Ott~\cite{HaddadOtt} proved that every point with dense orbit in the
one-sided full shift on finitely many symbols, or in a mixing shift of
finite type, is weakly product recurrent. In the full shift such a
point is not distal, giving a negative answer to the question.
Furthermore, Oprocha~\cite{Oprocha2010} gave sufficient
conditions, formulated in terms of weak mixing, under which the set of
weakly product recurrent points is residual.
Glasner and Weiss~\cite{GlasnerWeiss} then constructed a weakly
mixing minimal system every point of which is weakly
product recurrent but not distal.
Combining product recurrence with Furstenberg families, Dong, Shao
and Ye~\cite{DSY} introduced the notion of \(\mathscr{F}\)-PR.
In particular, under their framework, product recurrence is
\(\Finf\)-PR, and weak product recurrence is
\(\Fs\)-PR. They also proved that the orbit closure of every \(\Fs\)-PR point
is an \(M\)-system and every \(\Fps\)-PR point is minimal.
Later, Oprocha and Zhang~\cite{OZ} showed that \(\Fps\)-PR and
\(\Fpubd\)-PR are both equivalent to \(\Finf\)-PR,
and hence to distality, thereby completing the corresponding results
of Dong, Shao and Ye~\cite{DSY}. To sum up, we have
\[
\text{distality}\rightleftharpoons\Finf\text{-PR}
\rightleftharpoons \Fpubd\text{-PR}
\rightleftharpoons \Fps\text{-PR} \rightovernotleft
\Fs\text{-PR}.
\]

Denote by $\mathscr{F}\text{-}\mathrm{PR}_{0}$ the restriction of
$\mathscr{F}$-PR obtained by considering recurrence in pairs
only with points $y$ in TDSs having zero
topological entropy.
Dong, Shao and Ye~\cite{DSY} showed that the orbit closure of every
\(\Fs\text{-}\mathrm{PR}_{0}\) point is an
\(E\)-system, and every \(\Fps\PRzero\) point is minimal.
Since \(\Fs\subseteq \Fps\subseteq \Fpubd
\subseteq \Finf\), one always has
\[
 \Finf\PRzero \Rightarrow\Fpubd\PRzero
 \Rightarrow \Fps\PRzero \Rightarrow
 \Fs\PRzero.
\]
Dong, Shao and Ye~\cite{DSY} established that
$\text{distality}\Leftrightarrow\Finf\PRzero$,
$\Fpubd\PRzero\nRightarrow\Finf\PRzero$, and
$\Fs\PRzero\nRightarrow \Fps\PRzero$.
Therefore
\begin{equation}
\label{eq:PR0-implication-1}
\text{distality}\rightleftharpoons
\Finf\PRzero\rightovernotleft \Fpubd\PRzero
\rightoverquesmarkleft \Fps\PRzero \rightovernotleft
\Fs\PRzero.
\end{equation}
Inspired by these results, Dong, Shao and Ye~\cite{DSY}
posed the following open question (see
also \cite[p.~403]{OZ}):

\begin{question}[{\cite{OZ,DSY}}]
\label{ques:product-recurrence}
Does \(\Fps\PRzero \Rightarrow \Fpubd\PRzero\)
hold?
\end{question}

Our second main result gives a negative answer
to Question~\ref{ques:product-recurrence}.

\begin{theorem}
\label{thm:main}
There exists an \(\Fps\PRzero\) point which is not
\(\Fpubd\PRzero\), i.e., $\Fps\PRzero \nRightarrow
\Fpubd\PRzero$.
\end{theorem}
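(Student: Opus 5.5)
We prove Theorem~\ref{thm:main} with the same minimal u.p.e.\ system $(X,f)$ that will give Theorem~\ref{thm:disjointness}, taking $x$ to be any point of $X$. The key observation links disjointness to product recurrence. If $(X,f)$ is minimal and disjoint from every system in $\Mzero$, then for every $y$ in a zero-entropy system with $\overline{\orb(y)}\in \Mzero$, the orbit closure of $(x,y)$ is a joining of $X$ and $\overline{\orb(y)}$, hence equals the whole product. Consequently the return time set $N((x,y),U\times V)$ is nonempty for all neighbourhoods $U$ of $x$ and $V$ of $y$. Minimality of $X$ together with the density of minimal points in $\overline{\orb(y)}$ should then upgrade this to \emph{piecewise syndetic} returns. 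Meanwhile, failure of $\Ezero$-disjointness should supply a zero-entropy $E$-system $(Y,g)$ and a point $y$ whose joint return times with $x$ are nonempty but have upper Banach density zero.

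\textbf{Step 1: construction of $(X,f)$.} We build a minimal subshift with u.p.e.\ by a block-concatenation scheme. At stage $k$ there is a family of words $\mathcal{W}_k$. Each $w\in\mathcal{W}_{k+1}$ is a concatenation of all words of $\mathcal{W}_k$, which enforces minimality, followed by free blocks that give positive entropy in every pair of cylinders, which enforces u.p.e.

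Interspersed with these we insert rare ``synchronizing markers''. Their frequency tends to $0$, but they occur in every long window, so they are syndetic at each scale with gaps growing superexponentially. Separately, we fix a zero-entropy sequence $y\in\{0,1\}^{\Zp}$ with the following properties:
\begin{itemize}
\item $y$ is recurrent;
\item $y$ has $1$'s exactly at a density-zero set $S$ that is not piecewise syndetic in the relevant sense;
\item the orbit closure of $y$ carries a fully supported invariant measure, namely the point mass at $0^\infty$ combined with transitivity.
\end{itemize}
Strictly, full support needs care: we take $Y$ to be the orbit closure together with $0^\infty$, and arrange recurrence of $y$ so that $Y$ is transitive.

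\textbf{Step 2: $x$ is $\Fps\PRzero$.} Given a recurrent $y'$ in a zero-entropy system, we reduce to showing that $N((x,y'),U\times V)$ is piecewise syndetic. By the result of Dong--Shao--Ye, $\overline{\orb(y')}$ is an $E$-system; within it we pick a minimal point $z$ near $y'$ and along a return time of $y'$ (piecewise syndetic returns come from a neighbourhood of a minimal subsystem). The plan is then to use disjointness of $X$ from the zero-entropy minimal system $\overline{\orb(z)}$. This gives syndetic joint returns of $(x',z)$ for suitable $x'$, and by minimality and translation these transfer to piecewise syndetic joint returns of $(x,y')$. The cleanest route is to prove $X\in\Mzero^\perp$ directly, for instance via the $zm$-set characterization of~\cite{HPY} or a u.p.e.\ plus combinatorial independence argument, and then to derive $\Fps\PRzero$ by this transfer argument. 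I expect this step to be the \textbf{main obstacle}. The difficulty is that zero-entropy minimal systems can be arbitrary, so we must show that the free blocks of $X$ are flexible enough to synchronize with any zero-entropy minimal pattern. Our first attempt will be the independence/entropy-pair machinery: u.p.e.\ with a suitable uniformity in the construction gives intersection sets of positive density that are compatible with any zero-entropy syndetic set.

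\textbf{Step 3: $x$ is not $\Fpubd\PRzero$.} We choose $V=[1]$, so that $N(y,V)=S$, and choose $U$ to be a cylinder of a long word that occurs in $X$ only inside the synchronizing-marker region. The construction aligns the markers so that $N(x,U)\cap S\neq\emptyset$, which gives recurrence of $(x,y)$. However, $N(x,U)\cap S$ is contained in a set of zero upper Banach density, because both the marker positions and $S$ are sparse, with superexponential gaps. Hence $(x,y)$ is not $\Fpubd$-recurrent. Since $Y$ has zero entropy, this shows that $x$ is not $\Fpubd\PRzero$. The delicate point here is arranging that $x$ hits $U$ along $S$ often enough to be recurrent, yet not with positive upper Banach density; this must be done while the marker density in $X$ tends to zero so as not to destroy u.p.e.
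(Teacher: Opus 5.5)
\textbf{Step 3 proves the wrong property, using an inadmissible test point.} By definition, $x$ fails to be $\Fpubd\PRzero$ only if some $\Fpubd$-recurrent point $y$ of a zero-entropy system makes the pair $(x,y)$ \emph{not recurrent at all}. The notion $\mathscr{F}$-PR asks for plain recurrence of $(x,y)$, not $\mathscr{F}$-recurrence. You deliberately arrange $N(x,U)\cap S\neq\varnothing$, so that $(x,y)$ is recurrent, and then show only that the joint returns have zero upper Banach density. That is fully compatible with $x$ being $\Fpubd\PRzero$, so it refutes nothing. Your opening paragraph has the same confusion, where failure of $\Ezero$-disjointness is supposed to produce sparse but nonempty joint returns. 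Separately, your $y$ is not admissible. If the $1$'s of $y$ lie on a set $S$ with superexponentially growing gaps, then $BD^{*}(S)=0$. Unless $y=0^{\infty}$, some cylinder around $y$ fixes a $1$ at a position $j$, so its return set lies in $S-j$ and has zero upper Banach density; hence $y$ is not $\Fpubd$-recurrent. Likewise $\delta_{0^\infty}$ is not fully supported on $Y$. In fact every invariant measure on $Y$ gives $[1]_Y$ measure $0$, since every window of a point of $Y$ is a window of $y$. So $Y$ is not an $E$-system.

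\textbf{What is missing.} Since $x$ is to be $\Fps\PRzero$, any witness $y$ must be $\Fpubd$-recurrent but \emph{not} $\Fps$-recurrent, so its orbit closure must lie in $\Ezero\setminus\Mzero$. The witness therefore needs a return set of positive density that is not piecewise syndetic, and $x$ must be built to avoid that set completely. The paper takes $G=\bigcup_k G_k$, where $G_k$ consists of blocks of length $q_{k-1}$ repeated with period $q_k$ (with $q_{k+1}=2q_k^2$), and sets $H=\Zp\setminus G$. Then $D_*(H)\geq\frac12$ and $H\notin\Fps$. The point $a=\mathbf{1}_H$ has polynomial complexity, hence zero entropy. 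It is $\Fpubd$-recurrent because the periodic structure of the first $K_0$ levels forces returns along a subset of $q_{K_0}\Zp$ of positive lower density. The minimal u.p.e.\ subshift is then built so that every $1$ of $\hat x$ at a positive time lies in $G$. Off $G_{k_{m+1}}$, the periodic extension of $P_{m+1}$ agrees with that of $P_m$. The block $K_mF_m^{(P)}$, which carries all previous-level words including the free blocks, is placed exactly on $G_{k_{m+1},0}$. Hence $N(\hat x,[1])\cap N(a,[1])=\varnothing$, and $(\hat x,a)$ never returns to $[1]_X\times[1]_Y$. Your Step 1 (concatenate all previous words, then free blocks) resembles $K_m$ followed by $\mathcal{V}_m$, but it lacks this coordination. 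Finally, what you call the main obstacle in Step 2 is a known result: minimal diagonal systems, in particular minimal u.p.e.\ ones, lie in $\Mzero^{\perp}$ (Huang--Park--Ye). The orbit closure of an $\Fps$-recurrent point is an $M$-system (Huang--Ye), not merely an $E$-system via Dong--Shao--Ye as you wrote. Given that, $\omega_{f\times g}(x,y)$ is a joining and equals $X\times\overline{\orb(y,g)}$, so $(x,y)$ is recurrent. No piecewise syndetic joint returns and no transfer through minimal points are needed.
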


Combining Theorem~\ref{thm:main} with
\eqref{eq:PR0-implication-1}, we have
\[
\text{distality}\rightleftharpoons
\Finf\PRzero\rightovernotleft \Fpubd\PRzero
\rightovernotleft \Fps\PRzero \rightovernotleft
\Fs\PRzero.
\]

Our third main result shows that, for a point whose orbit closure has
zero topological entropy, the first four properties in the preceding diagram
are equivalent:
\begin{theorem}\label{thm:collapse}
Let \((X, f)\) be a TDS and \(x\in X\). If
\(h_{\mathrm{top}}(f|_{\overline{\orb(x,f)}})=0\),
the following statements are equivalent:
\begin{enumerate}
\renewcommand{\labelenumi}{\textup{(\roman{enumi})}}
\renewcommand{\theenumi}{\roman{enumi}}
\item \(x\) is distal;
\item $x$ is $\Finf\PRzero$;
\item $x$ is $\Fpubd\PRzero$;
\item $x$ is $\Fps\PRzero$.
\end{enumerate}
\end{theorem}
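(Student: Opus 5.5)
The implications (i)$\Rightarrow$(ii)$\Rightarrow$(iii)$\Rightarrow$(iv) need no new work. Distality is equivalent to $\Finf\PRzero$ by the result of Dong, Shao and Ye recalled above. The remaining arrows follow from the inclusions $\Fps\subseteq\Fpubd\subseteq\Finf$. So the content of Theorem~\ref{thm:collapse} is (iv)$\Rightarrow$(i). The key observation is that when $h_{\mathrm{top}}(f|_{\overline{\orb(x,f)}})=0$, the point $x$ itself lives in a zero-entropy system. It can therefore be tested against copies of its own orbit closure and against products of it with other zero-entropy systems, all of which remain zero-entropy.

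I would argue as follows. Let $x$ be $\Fps\PRzero$ and put $X_x=\overline{\orb(x,f)}$. By the result of Dong, Shao and Ye, $x$ is minimal, so $(X_x,f)$ is a minimal zero-entropy system. To show $x$ distal, fix any $y\in X_x$ with $(x,y)$ proximal; we must prove $y=x$. Since $X_x$ is minimal, there is a minimal point in $\overline{\orb((x,y),f\times f)}$ of the form $(u,u)$ lying in the proximal cell, with $u$ an idempotent-related point. Following the Ellis semigroup argument, choose a minimal idempotent $v$ with $vx=y$, noting $y$ is a minimal point of a zero-entropy system. The intended test point is a point $z$ in a zero-entropy system $(Z,g)$ such that $(x,z)$ fails to be $\Fps$-recurrent unless $x=vx$. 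Concretely, I would take $Z=X_x$ and $z=vx$, or more robustly a point of $X_x\times X_x$ built from $x$ and $y$, which is still zero-entropy.

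The main step is to show that if $x\neq vx$, then $N((x,z),U\times V)$ is not piecewise syndetic for suitable neighbourhoods. I would use the standard fact that for a minimal point $x$ and $z$ with $(x,z)$ recurrent along a piecewise syndetic set, the point $(x,z)$ returns in a way that forces $(x,z)$ to lie in a minimal subset of $X_x\times Z$ after applying a minimal idempotent. More precisely, piecewise syndetic recurrence of $(x,z)$ means there is a minimal point $(x',z')$ in $\overline{\orb((x,z))}$ with returns controlled by a thick intersection. Combining this with Oprocha--Zhang's argument for $\Fps$-PR $\Rightarrow$ distality, I would adapt their proof verbatim. Their test points come from arbitrary systems, so I would need to verify that the points they use can be chosen inside zero-entropy systems, namely inside $X_x$, $X_x\times X_x$, or the enveloping-semigroup orbit closures $\overline{\orb(vx)}\subseteq X_x$. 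All of these have entropy at most $2h_{\mathrm{top}}(f|_{X_x})=0$.

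The main obstacle is this verification: checking that in the Oprocha--Zhang proof that $\Fps$-PR implies distality, every auxiliary system can be replaced by a subsystem of a finite power of $X_x$, or a factor thereof. If their argument uses a universal object such as $\beta\N$ or an arbitrary minimal system, I would first try to replace it by $(\overline{\orb(y,f)},f)\subseteq X_x$. The idea is that non-distality is witnessed by a single proximal pair $(x,y)$ with $y\ne x$ minimal in $X_x$, and the pair $(x,y)$ itself then serves as the zero-entropy counterexample to $\Fps$-product recurrence.
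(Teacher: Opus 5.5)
Your reduction to (iv)$\Rightarrow$(i) matches the paper, as does your use of the Dong--Shao--Ye result to make $X_x=\overline{\orb(x,f)}$ a minimal zero-entropy system. However, the core step is never carried out, and the route you sketch cannot work.

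Every test point you actually name ($y$, $vx$, points of $\overline{\orb(vx,f)}\subseteq X_x$) lies in the minimal system $X_x$. So it is a minimal point, and you are only using $\Fs$-recurrent test points. A proximal pair $(x,y)$ with $y\neq x$ can perfectly well be recurrent; a transitive point of $X\times X$ for weakly mixing minimal $X$ is such a pair. If testing against minimal points sufficed, you would have shown that $\Fs\PRzero$ implies distality for points with zero-entropy orbit closure. That is false: in a weakly mixing, doubly minimal, zero-entropy system every point is $\Fs\PRzero$ and none is distal (Remark~\ref{Fps0=distai-Remark}).

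Using $(x,y)\in X_x\times X_x$ as the test point is vacuous. The triple $(x,(x,y))$ is recurrent exactly when $(x,y)$ is, and that is already required for $(x,y)$ to be an admissible ($\Fps$-recurrent) test point. You also aim at the wrong target. What is needed is an $\Fps$-recurrent $z$ with $(x,z)$ \emph{not recurrent}. It is not enough that $(x,z)$ fails to be $\Fps$-recurrent, or that some return-time set fails to be piecewise syndetic.

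The missing idea is a synchronization of return times with entropy control. Take neighbourhoods $V\ni x$ and $U\ni y$ with disjoint closures, let $\varepsilon$ be half the distance between them, and let $T=\{n: d(f^nx,f^ny)<\varepsilon\}$, which is thick. You need a zero-entropy system containing an $\Fps$-recurrent point $z$ and a neighbourhood $W$ of $z$ with $N(z,W)\subseteq N_f(y,U)\cap T$. Then $f^nx\notin V$ whenever $g^nz\in W$, so $(x,z)$ never enters $V\times W$.

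Such a $z$ cannot in general be taken minimal, and you give no argument that it can be found in a power of $X_x$. The Oprocha--Zhang test point is a synchronized symbolic sequence rather than a point of a power of $X_x$, and nothing there controls its entropy. So ``check that their auxiliary systems are zero-entropy'' is not a routine verification; it is the actual content of the theorem.

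The paper supplies this in two steps.
\begin{enumerate}
\item Lemma~\ref{lem:binary-realization} codes the returns of $y$ to $U$ by a minimal zero-entropy $a\in\Sigma_2$ with $a_0=1$ and $N_\sigma(a,\Cyl{1})\subseteq N_f(y,U)$. This uses the natural extension, a zero-dimensional extension of equal entropy, and a clopen coding.
\item Lemma~\ref{prop:entropy-sync} then builds $z$ with $N_\sigma(z,\Cyl{1})\subseteq N_\sigma(a,\Cyl{1})\cap T$. It copies $a$ on one family of islands of $T$ and plants copies of the prefixes $z_0\cdots z_m$ at every $m(m+1)$-th return of $a$ to $\Cyl{a_0\cdots a_m}$ on the other islands. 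The planting makes $z$ $\Fps$-recurrent, and the complexity estimate $p_z(n)\le 432\,n^{11/2}p_a(n+2\lceil\sqrt{n}\rceil)$ shows that $\overline{\orb(z,\sigma)}$ still has zero entropy.
\end{enumerate}
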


In particular, combining Theorem~\ref{thm:collapse} with
Remark~\ref{Fps0=distai-Remark}, we further obtain the following
relation for points whose orbit closures have zero topological entropy:
\[
\text{distality}\rightleftharpoons
\Finf\PRzero\rightleftharpoons \Fpubd\PRzero
\rightleftharpoons \Fps\PRzero \rightovernotleft
\Fs\PRzero.
\]

\section{Preliminaries}
\label{sec:preliminaries}

This section recalls some basic definitions used throughout the paper.

\subsection{Furstenberg family}

Let \(\N=\{1, 2, 3, \ldots\}\), \(\Zp=\{0, 1, 2, \ldots\}\),
and \(\mathbb{Z}=\{\ldots, -1, 0, 1, \ldots\}\).
For \(x\in\mathbb{R}\), let \(\lfloor x\rfloor\) and
\(\lceil x\rceil\) denote the greatest integer not exceeding
\(x\) and the least integer not less than \(x\), respectively.
Denote by $\mathcal{P}(\Zp)$ the set of all subsets of $\Zp$.
Following Akin~\cite{Akin}, a collection \(\mathscr{F}
\subseteq \mathcal{P}(\Zp)\) is a (\textit{Furstenberg}) \textit{family} if it is
hereditary upward; that is, \(F_{1}\in\mathscr{F}\) and \(F_{1}\subseteq F_{2}\)
imply \(F_{2}\in\mathscr{F}\). A family \(\mathscr{F}\) is
\textit{proper} if it is neither empty nor all of \(\mathcal{P}(\Zp)\).
Since \(\mathscr{F}\) is hereditary upward, it is proper if and only if
\(\varnothing \notin \mathscr{F}\) and \(\Zp \in \mathscr{F}\).
A proper family \(\mathscr{F}\) is a \textit{filter} if it is
closed under intersections, i.e.:
\[
  F_{1},F_{2}\in\mathscr{F}
  \Rightarrow
  F_{1}\cap F_{2}\in\mathscr{F}.
\]

For a family $\mathscr{F}$, the \textit{dual family}
of $\mathscr{F}$, denoted by $\kappa\mathscr{F}$, is defined as
\[
\kappa\mathscr{F}=\{F\in \mathcal{P}(\Zp):
F\cap F_1\neq \varnothing \text{ for any }
F_1\in \mathscr{F}\}.
\]

A set \(A\subseteq \Zp\) is
\begin{itemize}
  \item \textit{thick} if it contains arbitrarily long runs of
  $\Zp$, i.e., for any \(n\in \N\), there exists an \(i\in \Zp\)
  such that \(\{i, i+1, \ldots, i+n\}\subseteq A\);
  \item \textit{syndetic} if it has bounded gaps, i.e.,
  there exists \(L\in\N\) such that \([i, i+L]\cap A
  \neq \varnothing\) for any \(i\in \Zp\);
  \item \textit{piecewise syndetic} if it is an intersection
  of a syndetic set with a thick set.
\end{itemize}

For \(A\subseteq \Zp\), the \textit{upper Banach density} and
\textit{lower Banach density} of \(A\) are, respectively,
\[
 BD^{*}(A)=\limsup_{n-m\to\infty}\frac{\#(A\cap [m, n])}{n-m+1},
 \quad
 BD_{*}(A)=\liminf_{n-m\to\infty}\frac{\#(A\cap [m, n])}{n-m+1}.
\]
%%where \(I\) ranges over intervals of \(\Zp\).
The \textit{upper density} and
\textit{lower density} of \(A\) are, respectively,
\[
 D^{*}(A)=\limsup_{n\to\infty}
 \frac{\#(A\cap[0, n])}{n+1},
 \quad
 D_{*}(A)=\liminf_{n\to\infty}
 \frac{\#(A\cap[0, n])}{n+1}.
\]
We denote by $\Finf$, $\Ft$, $\Fs$, $\Fps$, and $\Fpubd$
the family of all infinite subsets, thick subsets, syndetic
subsets, piecewise syndetic subsets, and subsets with positive upper
Banach density of $\Zp$, respectively.
It is easy to see that (1) $F\in \Fps$ if and only if
$\bigcup_{i=0}^{m} (F-i)\in \Ft$
for some $m\in\Zp$ (\cite[Remark~4.46~(c) and
Theorem~4.49]{HS}), where $F-i=\{n-i: n\in F\}\cap \Zp$;
(2) $\kappa\Ft=\Fs$ and $\kappa\Fs=\Ft$.

\subsection{Topological dynamics}
Let $(X, f)$ be a TDS. A subset $X_1$ of $X$ is
\textit{clopen} if it is both open and closed.
A non-empty closed invariant subset
$X_1\subseteq X$ defines naturally a \textit{subsystem}
$(X_1, f|_{X_1})$  of $(X, f)$. The \textit{orbit} of
a point \(x\in X\) under \(f\) is the set
\(\orb(x,f)=\{f^{n}(x):n\in \Zp\}.\)
The \textit{$\omega$-limit set} of \(x\in X\)
is \(\omega_{f}(x)=\{y\in X: \exists n_{k}\to +\infty
\text{ s.t. } \lim_{k\to\infty}f^{n_{k}}(x)=y\}\).
It is easy to see
\(\omega_{f}(x)=\bigcap_{n\in \Zp}\overline{\{f^{m}(x): m\geq n\}}.\)
For $x\in X$ and $A$, $B\subseteq X$,
we define the \textit{return-time sets}
\(N_{f}(x,B)=\{n \in \N: f^{n}(x)\in B\}\)
and \(N_{f}(A,B)=\{n\in\N:f^{n}(A)\cap B\neq\varnothing\}.\)

Let $(X, f)$ and $(Y, g)$ be two TDSs. If there exists a continuous
surjection $\pi: X\to Y$ such that $\pi \circ f=g\circ \pi$, then
we say that $\pi$ is a \textit{factor map} from $X$ to $Y$, the
system $(Y, g)$ is a \textit{factor} of $(X, f)$ or $(X, f)$ is
an \textit{extension} of $(Y, g)$.

A TDS \((X,f)\) is \textit{transitive} if
\(N_{f}(U,V)\neq\varnothing\) for any non-empty open sets
\(U\), \(V\subseteq X\); \textit{weakly mixing} if
the product system $(X\times X, f\times f)$ is transitive;
and \textit{minimal} if \(\operatorname{Tran}(X,f)=X\).
Equivalently, $(X, f)$ is minimal if and only if it
contains no proper subsystems.

A point \(x\in X\) is called
\begin{itemize}
  \item a \textit{recurrent point} of \((X,f)\) if
\(N_{f}(x,U)\neq\varnothing\) for any neighborhood \(U\) of \(x\).
  \item a \textit{transitive point} of \((X,f)\) if \(\overline{\orb(x,f)}=X\).
  \item a \textit{minimal point} if
  \((\overline{\orb(x,f)},f|_{\overline{\orb(x,f)}})\) is minimal.
\item a \textit{distal point} if $\liminf_{n\to \infty}d(f^{n}(x), f^{n}(y))
  >0$ for any $y\in \overline{\orb(x,f)}\setminus \{x\}$.
\end{itemize}
Denote by
\(\operatorname{Rec}(X,f)\) and \(\operatorname{Tran}(X,f)\), respectively,
the sets of all recurrent points and transitive points of \((X,f)\).
It is well known that $x\in X$ is a minimal point of
$(X, f)$ if and only if for any neighborhood $U$ of $x$,
$N_{f}(x, U)\in \Fs$.

We say that a TDS \((X, f)\) is
\begin{itemize}
  \item an \textit{\(M\)-system} if it is transitive and the set of all
  minimal points is dense in \(X\);
  \item an \textit{\(E\)-system} if it is transitive and has an
  invariant Borel probability measure with full support;
  \item an \textit{$M_0$-system} if it is an $M$-system with zero topological
  entropy;
  \item an \textit{$E_0$-system} if it is an $E$-system with zero topological
  entropy.
\end{itemize}
We denote by $\mathscr{M}$, $M$, $E$, $M_0$, and $E_0$ the
classes of all minimal systems, $M$-systems,
\(E\)-systems, $M_0$-systems, and $E_0$-systems,
respectively. The following return-time characterizations for $M$-
and $E$-systems are standard; see \cite[Lemma~2.1~\textup{(1)}]{HY}
for part~\textup{(\ref{item:ME-ps})} and \cite[Lemma~3.6]{HPY}
for part~\textup{(\ref{item:ME-pubd})}.

\begin{lemma}[{\cite[Lemma~2.1\textup{(1)}]{HY}; \cite[Lemma~3.6]{HPY}}]
\label{lem:ME}
Let $(X, f)$ be a transitive TDS and \(x\in \operatorname{Tran}(X,f)\).
Then,
\begin{enumerate}
\renewcommand{\labelenumi}{\textup{(\roman{enumi})}}
\renewcommand{\theenumi}{\roman{enumi}}
\item\label{item:ME-ps} \(x\) is \(\Fps\)-recurrent if and only if \((X, f)\) is an
      \(M\)-system.
\item\label{item:ME-pubd} \(x\) is \(\Fpubd\)-recurrent if and only if \((X, f)\) is an
      \(E\)-system.
\end{enumerate}
\end{lemma}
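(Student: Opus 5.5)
For both parts I would work directly from the definition: \(x\) is \(\mathscr{F}\)-recurrent if \(N_f(x,U)\in\mathscr{F}\) for every neighborhood \(U\) of \(x\). The two parts run in parallel. Each forward direction transfers a recurrence property from a "good" point (a minimal point, or a \(\mu\)-typical point) to \(x\) by continuity, using \(x\in\operatorname{Tran}(X,f)\). Each converse produces the good object near \(x\) by a limiting argument along long windows. A single translation trick then spreads it over all of \(X\): given a nonempty open \(V\), pick \(n\) with \(f^n(x)\in V\), and apply the conclusion at \(x\) to the neighborhood \(U=f^{-n}(V)\). I will also routinely pass to a smaller open \(V'\) with \(\overline{V'}\subseteq V\), so that closedness issues disappear.

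For (i), forward direction, assume \((X,f)\) is an \(M\)-system, and fix an open neighborhood \(U\) of \(x\).
1. Pick a minimal point \(z\in U\) and an open \(V\ni z\) with \(V\subseteq U\). Then \(S=N_f(z,V)\) is syndetic, say with gap bound \(L\).
2. For each \(k\), the finitely many \(s\in S\cap[0,k]\) satisfy \(f^s(z)\in V\). By continuity there is a neighborhood \(W_k\) of \(z\) with \(f^s(W_k)\subseteq V\) for all these \(s\).
3. Since \(x\) is transitive, choose \(m_k\) with \(f^{m_k}(x)\in W_k\). Then \(N_f(x,U)\supseteq m_k+(S\cap[0,k])\).
4. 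Hence \(\bigcup_{i=0}^{L}(N_f(x,U)-i)\) contains \([m_k,m_k+k-L]\) for every \(k\). This set is thick, so \(N_f(x,U)\in\Fps\) by the criterion recalled in the preliminaries.

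For (i), converse, assume \(x\) is \(\Fps\)-recurrent, and let \(U\) be an open neighborhood of \(x\).
1. Put \(F=N_f(x,U)\) and take \(m\) with \(\bigcup_{i\le m}(F-i)\) thick, containing runs \([a_k,a_k+k]\).
2. Then \(f^t(x)\in K:=\bigcup_{i\le m}f^{-i}(\overline U)\) for all \(t\) in these runs, and \(K\) is closed.
3. Let \(y\) be a limit point of \(f^{a_k}(x)\). Then the whole forward orbit of \(y\), and hence \(\overline{\orb(y,f)}\), lies in \(K\).
4. \(\overline{\orb(y,f)}\) contains a minimal point \(w\). Since \(w\in K\), we have \(f^i(w)\in\overline U\) for some \(i\le m\), and \(f^i(w)\) is again minimal.
5. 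So every closed neighborhood of \(x\) contains a minimal point. The translation trick, with \(f^n\) of a minimal point still minimal, gives density of minimal points, so \((X,f)\) is an \(M\)-system.

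For (ii), forward direction, let \(\mu\) be an invariant measure with full support, and fix an open neighborhood \(U\) of \(x\).
1. Then \(\mu(U)=a>0\). For each \(N\), invariance gives \(\int \frac1N\sum_{n<N}1_U(f^ny)\,d\mu=a\), so some \(y\) satisfies \(\#\{n<N: f^ny\in U\}\ge aN\).
2. Because \(U\) is open, the set \(O_N=\{y:\#\{n<N:f^ny\in U\}\ge \lceil aN\rceil\}\) is open, and it is nonempty by step 1.
3. The orbit of \(x\) enters \(O_N\) at some time \(m_N\). This yields windows \([m_N,m_N+N)\) on which \(N_f(x,U)\) has density at least \(a\), so \(BD^*(N_f(x,U))\ge a>0\).

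For (ii), converse, assume \(x\) is \(\Fpubd\)-recurrent, and fix a countable base \(\{B_k\}\).
1. For each \(k\), pick \(n_k\) with \(f^{n_k}(x)\in B_k\) and set \(U_k=f^{-n_k}(B_k)\), an open neighborhood of \(x\).
2. Choose windows \(I_j\) of length tending to \(\infty\) on which \(N_f(x,U_k)\) has density at least \(\delta_k>0\).
3. A weak\(^*\) limit \(\nu_k\) of the empirical measures \(\frac1{|I_j|}\sum_{n\in I_j}\delta_{f^nx}\) is \(f\)-invariant, and by the portmanteau theorem \(\nu_k(\overline{U_k})\ge\delta_k\).
4. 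By invariance, \(\nu_k(\overline{B_k})\ge\nu_k(f^{-n_k}\overline{B_k})\ge\nu_k(\overline{U_k})>0\).
5. Then \(\mu=\sum_k2^{-k}\nu_k\) is invariant. Every nonempty open set contains some \(\overline{B_k}\), since the base may be taken to include sets with closures inside any given open set, so \(\mu\) has full support.

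The step needing most care is the converse of (i): extracting the minimal point inside \(\overline U\) itself and not merely in its orbit. This is why I use the closed set \(K\) and the bounded shift \(i\le m\) rather than working with \(U\) directly.
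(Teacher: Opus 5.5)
Your argument is correct in all four directions. The paper does not prove this lemma; it imports it from Huang--Ye (Lemma 2.1(1)) and Huang--Park--Ye (Lemma 3.6). So there is no in-text proof to compare against, and what you have written is a complete, self-contained version of the standard argument.

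In (i), you transport the syndetic return set of a nearby minimal point to $x$ on finite windows by continuity. Conversely, you extract a minimal point from a limit of long runs inside the closed set $K$. The bounded shift $i\le m$ is exactly what places that point in $\overline{U}$ itself, and the translation trick with $U=f^{-n}(V')$, $\overline{V'}\subseteq V$, then gives density.

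In (ii), you are in effect proving, for a transitive point $x$ and open $U$, the two halves of $\sup_{\mu}\mu(U)\le BD^{*}(N_f(x,U))\le\sup_{\mu}\mu(\overline{U})$, with the supremum over invariant measures. The lower bound comes from the open sets $O_N$ and uses only invariance and transitivity, with no ergodic theorem or ergodic decomposition. The upper bound comes from weak$^{*}$ limits of empirical measures along the windows, and gluing the $\nu_k$ then gives full support.

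Only cosmetic points remain:
\begin{enumerate}
\item With the paper's convention $N_f(x,B)\subseteq\N$, a possible return at time $0$ should be discarded. This changes window densities only by $O(1/N)$.
\item In step 4 of the converse of (ii), the inequality is actually an equality, because $\overline{U_k}\subseteq f^{-n_k}(\overline{B_k})$ and $\nu_k$ is invariant.
\item No special choice of base is needed in step 5: in a metric space, every nonempty open set contains the closure of some member of any base.
\end{enumerate}
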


Let $(X, f)$ and $(Y, g)$ be two TDSs. For \(A\subseteq X\times Y\),
let \(\mathrm{Proj}_{i}(A)\) denote the projection of \(A\) to the
\(i\)-th coordinate, \(i=1,2\). We say that a non-empty closed
set \(J\subseteq X\times Y\) is a \textit{joining} of \((X,f)\) and \((Y,g)\)
if it is invariant under the joint action of \(f\times g\) and
projects onto each coordinate space, i.e.,
$\mathrm{Proj}_{1}(J)=X$ and $\mathrm{Proj}_{2}(J)=Y$.
If each joining is equal to \(X\times Y\), then we say that \((X,f)\) and
\((Y,g)\) are \textit{disjoint} and denote this fact by
\((X,f) \perp (Y,g)\) or simply by \(f\perp g\).
Clearly, if $(X, f)\perp (Y, g)$, then $f$ and $g$
are surjective maps.

If $\mathcal{U}$ is an open cover of $X$, let $N(\mathcal{U})$
denote the minimum cardinality of a subcover of $\mathcal{U}$.
Given two open covers $\mathcal{U}$ and $\mathcal{V}$ of $X$,
$\mathcal{U}$ is said to be a \textit{refinement} of $\mathcal{V}$,
written $\mathcal{U}\succcurlyeq \mathcal{V}$, if
for any $U\in \mathcal{U}$, there exists $V\in \mathcal{V}$
such that $U\subseteq V$; their \textit{join}
$\mathcal{U}\vee\mathcal{V}$
is defined by
\[
\mathcal{U}\vee\mathcal{V}
=\{U\cap V:U\in\mathcal{U},\ V\in\mathcal{V}\}.
\]
Clearly $\mathcal{U}\vee\mathcal{V}\succcurlyeq \mathcal{U}$
and $\mathcal{U}\vee\mathcal{V}\succcurlyeq \mathcal{V}$.
Similarly we can define the join $\bigvee_{i=1}^{n}\mathcal{U}_i$
of any finite collection of open covers of $X$.
The \textit{topological entropy of $\mathcal{U}$
with respect to $f$} is
\[
h_{\mathrm{top}}(f,\mathcal{U})
=\lim_{n\to\infty}\frac{1}{n}\log N\left(
\bigvee_{i=0}^{n-1}f^{-i}\mathcal{U}\right),
\]
and the \textit{topological entropy of $f$} is
\[
h_{\mathrm{top}}(f)
=\sup_{\mathcal{U}}h_{\mathrm{top}}(f,\mathcal{U}),
\]
where $\mathcal{U}$ ranges over all open covers of $X$.
By \cite{Walters}, we know that if $\mathcal{U}\succcurlyeq
\mathcal{V}$, then $h_{\mathrm{top}}(f,\mathcal{U})\geq
h_{\mathrm{top}}(f,\mathcal{V})$.

A pair \((x_{1},x_{2})\in X\times X\) is an \textit{entropy pair}
if \(x_{1}\neq x_{2}\) and for any disjoint closed neighborhoods
\(U_{i}\) of \(x_{i}\), the open cover
\(\{X\setminus U_{1}, X\setminus U_{2}\}\)
has positive entropy. Let \(E_{2}(X,f)\) denote the set of
all entropy pairs for $(X, f)$.

According to Blanchard~\cite{Blanchard}, a TDS \((X,f)\)
\begin{itemize}
  \item has \textit{uniform positive entropy}
  (u.p.e.) if \(E_{2}(X, f)=X\times X\setminus\{(x,x):x\in X\}\).
  \item is a \textit{diagonal system} if \(E_{2}(X, f)\supseteq
  \{(x, f(x)): x\in X,\ x\neq f(x)\}\).
\end{itemize}
Clearly u.p.e.\ implies diagonality \cite{HPY}.
Furthermore, Blanchard \cite{Blanchard}
proved that a diagonal system is disjoint from all
minimal systems with zero entropy and that
$h_{\mathrm{top}}(f)>0$ if and only if $E_{2}(X, f)
\neq \varnothing$. For minimal
diagonal systems, Huang, Park and Ye \cite{{HPY}}
obtained the following result:
\begin{lemma}[{\cite[Theorem~2.7]{HPY}}]
\label{thm:HPY}
If \((X,f)\) is a minimal diagonal TDS, then
\((X, f)\in\Mzero^{\perp}.\)
\end{lemma}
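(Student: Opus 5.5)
The plan is to reduce the statement to Blanchard's theorem recalled above: a diagonal system is disjoint from every minimal system with zero entropy. The idea is to exploit the density of minimal points in an \(\Mzero\)-system and the closedness of joinings.

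Let \((X,f)\) be a minimal diagonal TDS and \((Y,g)\) an \(\Mzero\)-system, and let \(J\subseteq X\times Y\) be a joining. We must show \(J=X\times Y\).

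\textbf{Step 1: a minimal joining over each minimal orbit closure.} Fix a minimal point \(y\in Y\) and put \(Y_y=\overline{\orb(y,g)}\). Then \((Y_y,g|_{Y_y})\) is a minimal system, and \(h_{\mathrm{top}}(g|_{Y_y})\le h_{\mathrm{top}}(g)=0\).
\begin{itemize}
\item Since \(\mathrm{Proj}_2(J)=Y\), there is \(x\in X\) with \((x,y)\in J\).
\item The orbit closure \(\overline{\orb((x,y),f\times g)}\subseteq J\) is a closed invariant set, so it contains a minimal subset \(W\).
\item \(\mathrm{Proj}_2(W)\) is a nonempty closed invariant subset of \(Y_y\), hence equals \(Y_y\) by minimality.
\item Likewise \(\mathrm{Proj}_1(W)=X\), because \((X,f)\) is minimal.
\end{itemize}
Thus \(W\) is a joining of \((X,f)\) and \((Y_y,g|_{Y_y})\).

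\textbf{Step 2: apply Blanchard's theorem.} \((X,f)\) is diagonal and \((Y_y,g|_{Y_y})\) is minimal with zero entropy, so Blanchard's result gives \((X,f)\perp(Y_y,g|_{Y_y})\). Hence \(W=X\times Y_y\). In particular \(X\times\{y\}\subseteq J\).

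\textbf{Step 3: density.} Let \(Y_{\min}\) denote the set of minimal points of \(Y\). Step 2 holds for every \(y\in Y_{\min}\), so \(X\times Y_{\min}\subseteq J\). Since \((Y,g)\) is an \(M\)-system, \(Y_{\min}\) is dense in \(Y\), and \(J\) is closed. Therefore \(J\supseteq\overline{X\times Y_{\min}}=X\times Y\).

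All the real difficulty sits in Blanchard's disjointness theorem, which we take as given from the earlier discussion. The only point to check carefully is that the joining \(W\) is taken in the subsystem \(Y_y\) rather than in \(Y\), so that the zero-entropy minimal hypothesis applies. Notably, transitivity of \(Y\) is not used; only zero entropy and density of minimal points are needed.
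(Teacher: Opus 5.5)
Your proof is correct. The paper gives no argument for this lemma and simply quotes it from \cite{HPY}. You instead derive it from Blanchard's theorem, also quoted in the paper, that diagonal systems are disjoint from all minimal zero-entropy systems.

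All steps check out. Minimality of \(X\) forces the part of \(J\) lying over \(Y_y=\overline{\orb(y,g)}\) to project onto \(X\). Blanchard's theorem then gives \(X\times Y_y\subseteq J\). Density of minimal points together with closedness of \(J\) finishes the argument.

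Two simplifications are available. You can skip passing to the minimal subset \(W\), since \(J\cap(X\times Y_y)\) is already a joining of \((X,f)\) and \((Y_y,g|_{Y_y})\). And when \(y\) is a fixed point, \(Y_y\) is a single point, which the paper's nontriviality convention for TDSs may exclude; in that case \(X\times\{y\}\subseteq J\) follows directly from the first projection being \(X\), without Blanchard.

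Compared with the bare citation, your route exposes the mechanism. It uses only that \(X\) is minimal and that \((X,f)\in\mathscr{M}_0^\perp\), so among minimal systems, membership in \(\mathscr{M}_0^\perp\) and in \(\Mzero^\perp\) coincide. Combined with the result of \cite{HPY} that every member of \(\Mzero^\perp\) is minimal, this gives \(\Mzero^\perp=\{(X,f)\in\mathscr{M}_0^\perp:(X,f)\text{ minimal}\}\).
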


\subsection{Product recurrence}

Let $(X, f)$ be a TDS and $\mathscr{F}$ be a family. Following
Auslander and Furstenberg \cite{AuslanderFurstenberg}
and Dong, Shao and Ye~\cite{DSY}, we say that a point
\(x\in X\) is
\begin{itemize}
  \item \textit{\(\mathscr{F}\)-recurrent} if \(N_{f}(x,U)\in\mathscr{F}\)
for any neighborhood \(U\) of \(x\).
  \item \textit{product recurrent} if for any recurrent point \(y\) in
any TDS \((Y,g)\), the pair \((x, y)\) is recurrent
for \((X\times Y, f\times g)\).
\item \textit{weakly product recurrent} if for any recurrent point \(y\)
in any minimal TDS \((Y,g)\), the pair \((x, y)\) is recurrent
for \((X\times Y, f\times g)\).
  \item \textit{\(\mathscr{F}\)-product recurrent}
(\(\mathscr{F}\)-PR for short) if for any \(\mathscr{F}\)-recurrent
point \(y\) in any TDS \((Y,g)\), the pair \((x, y)\) is recurrent
for \((X\times Y, f\times g)\).
  \item \textit{\(\mathscr{F}\text{-}\mathrm{PR}_{0}\)} if for
any $\mathscr{F}$-recurrent point $y$ in any TDS $(Y, g)$ having zero
topological entropy, the pair $(x, y)$ is recurrent
for $(X\times Y, f\times g)$.
\end{itemize}

It is noteworthy that although the definition of \
\(\mathscr{F}\text{-}\mathrm{PR}_{0}\) differs from the
original one in \cite[Definition~5.1]{DSY}, under which only
$\overline{\orb(y, g)}$ is required to have zero topological entropy,
it is easy to see that the two definitions for
$\mathscr{F}\text{-}\mathrm{PR}_{0}$ are equivalent.
Meanwhile, by the definitions,
it is easy to see that a point is product
recurrent if and only if it is $\Finf$-PR, and is
weakly product recurrent if and only if it is $\Fs$-PR.

\subsection{Symbolic dynamics}
For \(\Sigma=\{0, 1\}\),
endow \(\Sigma_2:=\Sigma^{\Zp}\) with the product
topology of the discrete topology on \(\Sigma\).
The \textit{shift map} \(\sigma\colon\Sigma_2 \to \Sigma_2\)
is defined by \((\sigma(x))_{i}=x_{i+1}\). A \textit{subshift}
is the restriction \(\sigma|_{X_1}\) of \(\sigma\) to
a nonempty closed subset \(X_1\subseteq \Sigma_2\) satisfying
\(\sigma(X_1)\subseteq X_1\). For \(m\in \N\) and a word
\(w=w_0w_1\cdots w_{m-1}\in \Sigma^m\),
define the \textit{cylinder set}
\[
  \Cyl{w}=\{x\in \Sigma_2:
  x_i=w_i\text{ for }0\leq i\leq m-1\}.
\]
For a subshift \(X_1\subseteq \Sigma_2\), put
\(\Cyl{w}_{X_1}=X_1\cap\Cyl{w}\) and let \(\Lang_{n}(X_1)\) and
\(\Lang(X_1)\) denote its sets of words of length \(n\) and
of all finite words in $X_1$, respectively,
i.e., $\Lang_{n}(X_1)=\{x_jx_{j+1}\cdots x_{j+n-1}: j\in \Zp, x
=(x_i)_{i\in \Zp} \in X_1\}$ and $\Lang(X_1)=\bigcup_{n\in \N}\Lang_{n}(X_1)$.
It is easy to see that the cylinder sets \(\Cyl{w}_{X_1}\),
\(w\in \Lang(X_1)\), are clopen subsets
of $X_1$ and form a base for \(X_1\).

For \(w\in\Lang(\Sigma_{2})\), write \(|w|\) for its length and
\(\Sub(w)\) for the set of all words occurring in \(w\),
i.e., $\Sub(w)=\{w_i\cdots w_j: 0\leq i\leq j\leq |w|-1\}$.

For any $x=(x_i)_{i\in \Zp}\in \Sigma_2$ and $n\in \N$, let
\(p_{x}(n)\) denote the number of words of length
\(n\) occurring in \(x\), i.e.,
$p_{x}(n)=\#(\{x_{j}x_{j+1}\cdots x_{j+n-1}:
j\in \Zp\})$. It is easy to see that
$p_{x}(n)=\Lang_{n}(\overline{\orb(x, \sigma)})$.
According to \cite[Theorem~7.13~(i)]{Walters},
\begin{equation}\label{eq:entropy-sigma}
h_{\mathrm{top}}(\sigma|_{\overline{\orb(x,\sigma)}})
=\lim_{n\to \infty}\frac{1}{n}\log p_{x}(n).
\end{equation}

\section{$\Fps\PRzero$ on zero-entropy orbit closures}

This section first provides a sufficient condition for
\(\Fps\PRzero\) and establishes an entropy-preserving synchronization
construction. We then use this construction to characterize
\(\Fps\PRzero\) points whose orbit closures have zero topological entropy,
thereby proving Theorem~\ref{thm:collapse} and the strictness
of \(\Fps\PRzero \rightovernotleft
\Fs\PRzero\) stated in
Remark~\ref{Fps0=distai-Remark}.

Combining Lemmas~\ref{lem:ME} and \ref{thm:HPY}, we
first have the following sufficient condition for
\(\Fps\PRzero\):
\begin{proposition}
\label{prop:product-recurrence}
Let \((X,f)\) be a minimal diagonal system.
Then every point of \(X\) is \(\Fps\PRzero\).
\end{proposition}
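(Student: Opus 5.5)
Fix $x\in X$, a TDS $(Y,g)$ with $h_{\mathrm{top}}(g)=0$, and an $\Fps$-recurrent point $y\in Y$. The goal is to show that $(x,y)$ is recurrent for $f\times g$. The plan is to use Lemma~\ref{lem:ME} and Lemma~\ref{thm:HPY} to get a product-type joining, and then read off recurrence of the pair from it.

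First I reduce to the orbit closure. Put $Z=\overline{\orb(y,g)}$. Then $(Z,g|_Z)$ has zero entropy, since entropy does not increase on subsystems. Because $y$ is recurrent, $Z$ is transitive with $y\in\operatorname{Tran}(Z,g|_Z)$. The return-time sets of $y$ to neighborhoods relative to $Z$ are the same as in $Y$, so $y$ is still $\Fps$-recurrent in $Z$. Lemma~\ref{lem:ME}(i) then shows that $(Z,g|_Z)$ is an $M$-system, so it is an $M_0$-system. Since $(X,f)$ is minimal and diagonal, Lemma~\ref{thm:HPY} gives $(X,f)\perp(Z,g|_Z)$.

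Next I build a joining. Consider $J=\overline{\orb((x,y),f\times g)}$, and let $W=\omega_{f\times g}((x,y))$. Since $y$ is recurrent, $y\in\omega_g(y)$, so the projection of $W$ to $Z$ contains $y$. As $W$ is closed and invariant, that projection is all of $Z$. The projection of $W$ to $X$ is a nonempty closed invariant subset of the minimal system $X$, so it equals $X$. Hence $W$ is a joining, and disjointness gives $W=X\times Z$.

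Finally I conclude. Since $(x,y)\in X\times Z=W=\omega_{f\times g}((x,y))$, the pair $(x,y)$ is recurrent, so $x$ is $\Fps\PRzero$.

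The only delicate point is checking that the $\omega$-limit set projects onto both factors, so that it is a genuine joining; minimality of $X$ and recurrence of $y$ handle this. If one prefers, the full orbit closure $J$ can be used instead, since it also projects onto both factors and so equals $X\times Z$. But passing through $\omega$-limits makes the recurrence conclusion immediate, so I would use that route.
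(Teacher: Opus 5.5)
Your proof is correct and is essentially the paper's argument: you pass to the $M_0$-system $\overline{\orb(y,g)}$ via Lemma~\ref{lem:ME}(i), check that $\omega_{f\times g}(x,y)$ is a joining, and apply Lemma~\ref{thm:HPY} to get $\omega_{f\times g}(x,y)=X\times\overline{\orb(y,g)}\ni(x,y)$. The only cosmetic difference is in the first projection: you get $\mathrm{Proj}_1=X$ directly from minimality, since every nonempty closed invariant subset of $X$ is $X$, whereas the paper first shows $x\in\mathrm{Proj}_1(J)$ using the recurrence of $x$.
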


\begin{proof}
Fix any \(x\in X\), and let \(y\) be an \(\Fps\)-recurrent point of
a zero-entropy TDS \((Y,g)\).
Lemma~\ref{lem:ME}~\textup{(\ref{item:ME-ps})} gives
\((\overline{\orb(y,g)}, g|_{\overline{\orb(y,g)}})\in \Mzero\).
Set \(J=\omega_{f\times g}(x,y).\)
Clearly \(J\) is a nonempty closed invariant set
of $f\times g|_{\overline{\orb(y,g)}}$. Meanwhile, by
the minimality of $(X, f)$, there exists an increasing sequence
$(n_{j})$ such that $\lim_{j\to \infty}f^{n_{j}}(x)=x$.
By the compactness of $\overline{\orb(y, g)}$,
there exists a subsequence $(n'_{j})$ such that
$(g^{n'_{j}}(y))$ has a limit $y_{0}\in \overline{\orb(y, g)}$.
Then
\[
\lim_{j\to \infty}(f\times g)^{n'_{j}}
(x, y)=(x, y_{0})\in J,
\]
implying
$x\in \mathrm{Proj}_{1}(J)$. Thus
$\orb(x, f)\subseteq \mathrm{Proj}_{1}
(\orb((x, y_{0}), f\times g))\subseteq
\mathrm{Proj}_{1}(J)$, and hence
$X=\overline{\orb(x, f)}=
\mathrm{Proj}_{1}(J)$.
Similarly, it can be verified
$\overline{\orb(y,g)}=\mathrm{Proj}_{2}(J)$.
Therefore $J$ is a joining of $(X, f)$
and $(\overline{\orb(y,g)},
g|_{\overline{\orb(y,g)}})$. This, together with
Lemma~\ref{thm:HPY}, implies
\[
 \omega_{f\times g}(x,y)=J=X\times \overline{\orb(y,g)}.
\]
In particular \((x, y)\) is a recurrent point
of $f\times g$. This means that $x$ is \(\Fps\PRzero\).
\end{proof}

We next establish the entropy-preserving synchronization
construction needed below.

\begin{lemma}
\label{prop:entropy-sync}
Let \(a\in\Sigma_2\) be a minimal point with \(a_0=1\). Then, for
any \(T\in \Ft\), there exists \(z\in\Sigma_2\)
with \(z_0=1\) such that
\begin{enumerate}
\renewcommand{\labelenumi}{\textup{(\roman{enumi})}}
\renewcommand{\theenumi}{\roman{enumi}}
\item\label{item:sync-recurrence}
\(z\) is \(\Fps\)-recurrent;
\item\label{item:sync-support}
\(N_\sigma(z, \Cyl{1})\subseteq
N_\sigma(a, \Cyl{1})\cap T\);
\item\label{item:sync-entropy}
\(h_{\mathrm{top}}(\sigma|_{\overline{\orb(z,\sigma)}})
 =h_{\mathrm{top}}(\sigma|_{\overline{\orb(a,\sigma)}}).\)
\end{enumerate}
\end{lemma}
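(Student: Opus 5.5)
The plan is to build \(z\) as a limit of finite words \(w_1\subseteq w_2\subseteq\cdots\), where each \(w_k=z_{[0,n_k)}\) is a \emph{masked copy} of \(a\): \(z_n\in\{0,a_n\}\) for every \(n\), and \(z_n=1\) only at positions \(n\) lying in \(T\) (or at \(n=0\)). Since \(N_\sigma(\cdot,\cdot)\) only counts \(n\in\N\), the condition \(z_0=1\) does not conflict with (\ref{item:sync-support}). Start with \(w_1=1\).

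Given \(w_k\), I will use thickness of \(T\) to choose a very long interval \(J_{k}=[t_k,t_k+L_k]\subseteq T\) with \(t_k>n_k\). By minimality of \(a\), the set \(R_k=\{p: a_{[p,p+n_k)}=a_{[0,n_k)}\}\) is syndetic with some gap \(g_k\). Inside \(J_k\), pick occurrences \(p\in R_k\) with consecutive gaps at most \(g_k+n_k\) (non-overlapping), and write a copy of \(w_k\) at each such \(p\). This is legitimate because \(w_k\le a_{[0,n_k)}=a_{[p,p+n_k)}\) coordinatewise, so (\ref{item:sync-support}) is preserved. Every other position in \((n_k,t_k+L_k]\) is set to \(0\), and \(w_{k+1}=z_{[0,t_k+L_k]}\).

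Property (\ref{item:sync-support}) then holds by construction. For (\ref{item:sync-entropy}), the inequality \(\le\) should follow because every word of \(z\) is obtained from a word of \(a\) by turning some \(1\)'s into \(0\)'s along the hierarchical pattern. I would bound \(p_z(n)\) by the number of words of \(a\) times the number of admissible zero-masks, and the masks are determined by a subexponential amount of data once \(L_k\) grows fast. The inequality \(\ge\) is more delicate. Here I would modify the stage-\(k\) filling: in a sub-interval of \(J_k\) of length \(\ell_k\to\infty\), copy \(a\) exactly, i.e. \(z_n=a_n\). This puts \(\Lang_{\ell_k}(\overline{\orb(a,\sigma)})\)-many distinct words into \(z\) up to boundary effects. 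Because \(\overline{\orb(a,\sigma)}\) is minimal, each of its words of length \(\ell\) appears in every sufficiently long segment of \(a\), so \(p_z(\ell)\ge p_a(\ell)\) for all \(\ell\). Combined with (\ref{eq:entropy-sigma}), this gives equality.

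The main obstacle is (\ref{item:sync-recurrence}): showing that \(N_\sigma(z,[w_k])\) is piecewise syndetic for each fixed \(k\). The construction gives occurrences of \(w_k\) at gap \(\le g_k+n_k\) only inside \(J_k\), and at later stages these are diluted by zero padding, so the windows of bounded gap need not have unbounded length. My first attempt would be to enforce uniformity: at stage \(j\), fill \(J_j\) not only with copies of \(w_{j-1}\) but, on disjoint sub-intervals of length \(\ge j\cdot(g_k+n_k)\), with copies of \(w_k\) at gap \(\le g_k+n_k\) for every \(k<j\). Thus for each \(k\), \(\bigcup_{i\le g_k+n_k}(N_\sigma(z,[w_k])-i)\) contains intervals of length \(\to\infty\). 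Hence it is thick, and by fact (1) on \(\Fps\) this set is piecewise syndetic. As a fallback, I would instead verify via Lemma~\ref{lem:ME}~(\ref{item:ME-ps}) that \(\overline{\orb(z,\sigma)}\) is an \(M\)-system. For this I would exhibit, for each \(k\), a minimal point (a limit of the periodic-like arrangement of \(w_k\)-copies along \(R_k\)) whose cylinder around \(w_k\) is hit. I must check that the extra stages do not destroy the entropy count; they only add masked copies of \(a\), so the upper bound argument is unaffected.
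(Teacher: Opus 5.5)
Your arguments for (i), (ii) and for the inequality $h_{\mathrm{top}}(\sigma|_{\overline{\orb(z,\sigma)}})\geq h_{\mathrm{top}}(\sigma|_{\overline{\orb(a,\sigma)}})$ are sound and essentially coincide with the paper's. The paper also sets $z=a$ on infinitely many disjoint intervals of $T$ (the $I_{2j-1}$). For each level it places copies of a prefix of $z$ at occurrences of the corresponding prefix of $a$ on intervals of $T$ of unbounded length (the $I_{2^m(2j-1)}$), so a finite union of shifts of the return set is thick.

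The gap is the reverse inequality. It is the technical core of the lemma, and you only assert it. Being coordinatewise below $a$ gives no entropy bound by itself: for $a=1^\infty$ every point of $\Sigma_2$ is below $a$. So the remark that later stages ``only add masked copies of $a$'' proves nothing. As stated, your construction can in fact violate (iii), because you allow any choice of occurrences $p\in R_k$ with consecutive gaps at most $g_k+n_k$. Take $a=1^\infty$ and $T=\N$. For $w_1=1$ one has $R_1=\Zp$, and every placement with gaps in $\{1,2\}$ is admissible. Realizing all such patterns across the infinitely many $w_1$-regions your fix for (i) requires puts every binary word without $00$ into $z$. This gives $h_{\mathrm{top}}(\sigma|_{\overline{\orb(z,\sigma)}})\geq\log\frac{1+\sqrt5}{2}>0=h_{\mathrm{top}}(\sigma|_{\overline{\orb(a,\sigma)}})$.

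Two ingredients are missing.

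First, the copy positions must be a rigid function of $a$ up to bounded data. The paper takes every $m(m+1)$-th element of the ordered occurrences of $a_0\cdots a_m$ in the interval. A length-$n$ word meeting level $m$ is then recovered injectively from an $a$-word of length $n+2m$, the phase $\nu\bmod m(m+1)$, and two offsets. Since $a$ may have positive entropy, the enlargement $2m$ must be $o(n)$, so this encoding is only used for $m\leq\lceil\sqrt n\rceil$.

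Second, you need to control windows meeting copies of long prefixes, of which there are infinitely many levels. Fast growth of $L_k$ only ensures that a window meets at most one $J_k$, the analogue of the paper's separation of the intervals $I_n$. It says nothing about the inside of a copy region. In your scheme consecutive copies of $w_k$ may abut, so windows see junctions of the form (suffix of $w_k$)$\,0^g\,$(prefix of $w_k$), and you never bound their number. In the paper, consecutive level-$m$ copies start at least $m(m+1)$ apart. Hence for $m>\sqrt n$ a window meets at most one copy and reads $0^\alpha u0^\beta$ with $u$ a subword of $z_0\cdots z_m$. A strong induction on $m$ then reduces $u$ to one of the earlier cases without branching, yielding $p_z(n)\leq 432\,n^{11/2}p_a(n+2\lceil\sqrt n\rceil)$. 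Without rigidity and separation of this kind, your claim that the masks carry only subexponential information is unsupported.
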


\begin{proof}
By \(T\in \Ft\), there exist intervals
\(I_{n}=[b_{n}, b_{n}+n-1]\cap \N \subseteq T\),
\(n \in \N\), such that
\begin{equation}\label{eq:island-separation}
 b_{n}>n \text{ and }
 b_{n+1}-(b_{n}+n)>n
 \quad (\forall n \in \N).
\end{equation}

Noting that every \(n\in \N\) has a unique representation \(n=2^{m} (2j-1)\),
where \(m \in \Zp\) and \(j\in \N\), we shall use this representation throughout.
For \(m, j\in \N\), enumerate the elements of
$[b_{2^{m}(2j-1)},
 b_{2^{m}(2j-1)}+2^{m}(2j-1)-m-1]
 \cap N_\sigma(a,\Cyl{a_{0}\cdots a_{m}})$
increasingly as
\[
 \gamma_{1}^{(m,j)}<\cdots<\gamma_{N(m,j)}^{(m,j)},
\]
where
\(N(m,j)=\#([b_{2^{m}(2j-1)},
 b_{2^{m}(2j-1)}+2^{m}(2j-1)-m-1]
 \cap N_\sigma(a,\Cyl{a_{0}\cdots a_{m}}))\),
and set
\[
\mathscr{G}_{m,j}
 =\bigcup_{\ell=1}^{\left\lfloor\frac{N(m,j)}{m(m+1)}\right\rfloor}
 [\gamma_{\ell m(m+1)}^{(m,j)},\gamma_{\ell m(m+1)}^{(m,j)}+m],
 \quad
 \mathscr{G}_{m}=\bigcup_{j\in\N}\mathscr{G}_{m,j},
\]
and
\[
\mathscr{G}_{m}^{L}
=\bigcup_{j\in\N}
 \left\{\gamma_{\ell m(m+1)}^{(m,j)}:
 1\leq\ell\leq\left\lfloor\frac{N(m,j)}{m(m+1)}\right\rfloor\right\}.
 \]
By these constructions, the uniqueness of the representation
\(2^{m}(2j-1)\), and \eqref{eq:island-separation},
it can be verified that
\begin{itemize}
  \item \(\mathscr{G}_{m}=\bigcup_{k\in\mathscr{G}_{m}^{L}}[k,k+m]\);
  \item The family
\(\{[k,k+m]:m\in\N,\ k\in\mathscr{G}_{m}^{L}\}
\cup\{I_{2j-1}:j\in\N\}\cup\{\{0\}\}\) is pairwise disjoint.
\end{itemize}

Now we define \(z=(z_{i})_{i\in\Zp}\in\Sigma_{2}\) as follows:
\begin{itemize}
  \item \(z_{0}=1\);
  \item \(z_{i}=a_{i}\) for
\(i\in\bigcup_{j\in\N}I_{2j-1}\);
  \item \(z_{i}=0\) for
\(i\in\Zp\setminus(\{0\}\cup\bigcup_{j\in\N}I_{2j-1}
 \cup\bigcup_{m\in \N}\mathscr{G}_{m})\);
  \item The remaining values of \(z\) are defined inductively
on the sets of coordinates \(\mathscr{G}_{m}\), \(m\in\N\). The pairwise
disjointness above guarantees that the domains of all these assignments are
pairwise disjoint. First, we observe that, for \(m\in\N\), by
\eqref{eq:island-separation} and
the construction of \(\mathscr{G}_{m',j}\),
it follows that, for any \(m'\geq m\) and any \(j\in\N\),
\[
 \mathscr{G}_{m',j}\subseteq I_{2^{m'}(2j-1)}
 \subseteq[b_{2^{m'}(2j-1)},+\infty)
 \subseteq [2^{m},+\infty)
 \subseteq[m+1,+\infty),
\]
and hence
\begin{equation}\label{eq:[0,m]-values}
 [0,m]\cap\bigcup_{m'\geq m}\mathscr{G}_{m'}=\varnothing.
\end{equation}

When \(m=1\), this shows that \(z_{0}\) and \(z_{1}\) are assigned by the
first three items. In particular, \(z_{0}=1\) and \(z_{1}=0\).
For any \(k\in\mathscr{G}_{1}^{L}\), take \(z_{k}z_{k+1}=z_{0}z_{1}=10\).
The pairwise disjointness above ensures that the values on \(\mathscr{G}_{1}\)
are uniquely determined.

Now let \(m\geq2\), and suppose that the values on \(\mathscr{G}_{m'}\)
have been defined for all \(1\leq m'<m\). By \eqref{eq:[0,m]-values},
each value $z_{i}$, $0\leq i\leq m$, has either been assigned initially
by the first three items or determined on some \(\mathscr{G}_{m'}\) with
\(1\leq m'<m\), so \(z_{0}\cdots z_{m}\) is determined.
Then, for any \(k\in\mathscr{G}_{m}^{L}\), take
\(z_{k}\cdots z_{k+m}=z_{0}\cdots z_{m}\). The
pairwise disjointness above ensures that the values on \(\mathscr{G}_{m}\)
are uniquely determined.
\end{itemize}

Thus these assignments define \(z\) uniquely. In particular,
it can be verified
\begin{equation}\label{eq:z-definition}
 z_{i}=
 \begin{cases}
  1, & i=0,\\
  a_{i}, & i\in \bigcup_{j\in \N}I_{2j-1},\\
  z_{i-k}, & i\in[k,k+m] \text{ for } k\in \mathscr{G}_{m}^{L} \ (m\in \N),\\
  0, & \text{otherwise}.
 \end{cases}
\end{equation}

Next we verify the three conditions:

\smallskip

(i) For any neighborhood $U$ of $z$, there exists some $m\in \N$
such that $\Cyl{z_{0}\cdots z_{m}}\subseteq U$. Since \(a\) is a
minimal point, \(N_\sigma(a, \Cyl{a_{0}\cdots a_{m}})\) is syndetic,
implying that there exists \(L_{m}\in\N\) such that for any $i\in \Zp$,
\([i, i+L_{m}-1]\cap N_\sigma(a,\Cyl{a_{0}\cdots a_{m}})\neq\varnothing\).
This, together with the definition of $N(m, j)$ and
\(\lim_{j\to \infty}(2^{m}(2j-1)-m)=+\infty\), yields that there
exists \(j_{m}\in\N\) such that, for any \(j\geq j_{m}\),
$N(m,j)\geq 2m(m+1)$. Meanwhile, by the choices of
$\gamma_{1}^{(m,j)}, \gamma_{2}^{(m,j)},\ldots,
\gamma_{N(m,j)}^{(m,j)}$, we have
that, for any \(j\geq j_{m}\),
\[
\begin{cases}\gamma_{1}^{(m,j)}-b_{2^{m}(2j-1)}\leq L_{m}-1,\\
\gamma_{\ell+1}^{(m,j)}-\gamma_{\ell}^{(m,j)}\leq L_{m}\quad
(1\leq \ell<N(m,j)),\\
b_{2^{m}(2j-1)}+2^{m}(2j-1)-m-1
-\gamma_{N(m,j)}^{(m,j)}\leq L_{m}-1,
\end{cases}
\]
implying
\[
\begin{cases}
0\leq \gamma_{m(m+1)}^{(m,j)}-b_{2^{m}(2j-1)}
\leq m(m+1)L_{m},\\
\gamma_{(\ell+1)m(m+1)}^{(m,j)}-\gamma_{\ell m(m+1)}^{(m,j)}
\leq m(m+1)L_{m} \quad (1\leq \ell <
\left\lfloor\frac{N(m,j)}{m(m+1)}\right\rfloor), \\
b_{2^{m}(2j-1)}+2^{m}(2j-1)-m-1-
\gamma_{\left\lfloor\frac{N(m,j)}{m(m+1)}\right\rfloor m(m+1)}^{(m,j)}
\leq m(m+1)L_{m}.
\end{cases}
\]
Together with \(\left\{\gamma_{\ell m(m+1)}^{(m,j)}:1\leq\ell\leq
\left\lfloor\frac{N(m,j)}{m(m+1)}\right\rfloor\right\}\subseteq\mathscr{G}_{m}^{L}\), we obtain
\[
 \bigcup_{j\geq j_{m}}[{b_{2^{m}(2j-1)}},
 {b_{2^{m}(2j-1)}+2^{m}(2j-1)-1}-m(m+1)L_{m}-m]
 \subseteq\bigcup_{i=0}^{m(m+1)L_{m}+m}({\mathscr{G}_{m}^{L}}-i),
\]
i.e., $\bigcup_{i=0}^{m(m+1)L_{m}+m}({\mathscr{G}_{m}^{L}}-i)
\in \Ft$. Thus \(\mathscr{G}_{m}^{L}\in\Fps\).

On the other hand, for any \(n\in\mathscr{G}_{m}^{L}\),
the definition of \(z\) gives
\(z_{n}\cdots z_{n+m}=z_{0}\cdots z_{m}\), and hence
\(\mathscr{G}_{m}^{L}\subseteq
 N_\sigma(z, \Cyl{z_{0}\cdots z_{m}})
 \subseteq N_\sigma(z, U)\in\Fps\) by \(\mathscr{G}_{m}^{L}\in\Fps\).
 Therefore \(z\) is \(\Fps\)-recurrent.

\smallskip

(ii)
We first inductively prove that \(z_{n}\leq a_{n}\) for all $n\in \Zp$.

For $n=0$, it is clear that $z_{0}= a_{0}$.
For $n\in \Zp$, suppose that $z_{i}\leq a_{i}$ holds for
all $0\leq i\leq n$. It suffices to check $z_{n+1}\leq a_{n+1}$.
We consider the following two cases:

1) If $n+1\in \Zp\setminus (\bigcup_{m\in\N}
\bigcup_{k\in\mathscr{G}_{m}^{L}}[k, k+m])$,
by the definition of $z$, it is clear that
$z_{n+1}\leq a_{n+1}$.

2) If $n+1\in \bigcup_{m\in\N}
\bigcup_{k\in\mathscr{G}_{m}^{L}}[k, k+m]$, then
there exist $m\in \N$ and $k\in \mathscr{G}_{m}^{L}$
such that $n+1=k+i$ for some $0\leq i\leq m$.
Since $i<k+i=n+1$, the induction hypothesis,
together with the definition of $z$ and
$\mathscr{G}_{m}^{L}\subseteq N_\sigma(a,\Cyl{a_{0}\cdots a_{m}})$, gives
$z_{n+1}=z_{k+i}=z_{i}\leq a_{i}=a_{k+i}=a_{n+1}$.

Thus $z_{n}\leq a_{n}$ for all $n\in\Zp$.

Now, for any \(n\in N_\sigma(z,\Cyl{1})\),
we have \(z_{n}=1\),
and then \(a_{n}=1\). By \eqref{eq:z-definition}, either
\(n\in I_{2j-1}\) for some \(j\in\N\), or
\(n\in\mathscr{G}_{m,j}\subseteq I_{2^{m}(2j-1)}\) for some
\(m,j\in\N\), implying \(n\in T\), and thus
\( n\in N_\sigma(a,\Cyl{1})\cap T.\) Therefore
\(N_\sigma(z, \Cyl{1})\subseteq N_\sigma(a,\Cyl{1})\cap T\).

\smallskip

(iii) For simplicity, denote
\(X_{a}=\overline{\orb(a,\sigma)}\) and
\(X_{z}=\overline{\orb(z,\sigma)}\).

\smallskip

iii-1) $h_{\mathrm{top}}(\sigma|_{X_{z}})
\geq h_{\mathrm{top}}(\sigma|_{X_{a}})$.

\smallskip

Fix any \(n\in \N\). For any $i\in \Zp$, since $\sigma^{i}(a)$
is a minimal point in $X_a$, we have
\(N_{\sigma}(\sigma^{i}(a),
[a_i\cdots$ $a_{i+n-1}]_{X_{a}})\in \Fs\),
implying \(N_{\sigma}(\sigma^{i}(a),
\Cyl{a_i\cdots a_{i+n-1}}_{X_{a}})+i\subseteq
N_{\sigma}(a,\Cyl{a_i\cdots a_{i+n-1}}_{X_{a}})\in\Fs.\)
Moreover,  since $\{a_i\cdots a_{i+n-1}: i\in \Zp\}$
is a finite set, there exists
\(L(n)\in\N\) such that for any $j\in \Zp$,
$[j, j+L(n)-1]\cap
N_{\sigma}(a,\Cyl{u}_{X_{a}})\neq \varnothing$ for all $u\in
\{a_{i}\cdots a_{i+n-1}:
i\in \Zp\}$. This means that every
length-(\(L(n)+n-1\)) word occurring in $a$ contains
all length-\(n\) words occurring in \(a\).

Let \(j_0=L(n)+n+1\). Noting that \(z_{b_{2j_0-1}}\cdots
z_{b_{2j_0-1}+2j_0-2}=a_{b_{2j_0-1}}\cdots
a_{b_{2j_0-1}+2j_0-2}\) by \eqref{eq:z-definition}
and $|a_{b_{2j_0-1}}\cdots a_{b_{2j_0-1}+2j_0-2}|=2j_0-1>L(n)+n-1$,
the word \(z_{b_{2j_0-1}}\cdots z_{b_{2j_0-1}+L(n)+n-2}\) is a
length-(\(L(n)+n-1\)) word occurring in \(a\), and
therefore contains all length-\(n\) words occurring in
\(a\). Thus \(\{a_i\cdots a_{i+n-1}: i\in \Zp\}
 \subseteq \{z_i\cdots z_{i+n-1}: i\in \Zp\},\)
so \(p_{z}(n)\geq p_{a}(n)\). Hence
\begin{align*}
h_{\mathrm{top}}
(\sigma|_{X_{z}})
= & \lim_{n\to \infty}\frac{1}{n}\log \#\{z_i\cdots z_{i+n-1}: i\in \Zp\}\\
\geq & \lim_{n\to \infty}\frac{1}{n}\log \#\{a_i\cdots a_{i+n-1}: i\in \Zp\}\\
= & h_{\mathrm{top}}(\sigma|_{X_{a}}).
\end{align*}

iii-2) $h_{\mathrm{top}}(\sigma|_{X_{z}})
\leq h_{\mathrm{top}}(\sigma|_{X_{a}})$.

\smallskip

Fix any \(n\in\N\).
By \eqref{eq:island-separation}, every interval $[t, t+n-1]$,
$t\in \Zp$, that avoids \(I_1,\ldots,I_n\) meets at most one interval among
\(I_{n+1},I_{n+2},\ldots\), since, for every \(s\geq n+1\),
\(b_{s+1}-(b_s+s-1)>s+1>n.\)
Moreover, by \eqref{eq:z-definition},
\begin{equation}\label{eq:z-support}
 \{i\in\Zp:z_i=1\}\subseteq\{0\}\cup\bigcup_{s\in\N}I_s.
\end{equation}

For each \(t\in\Zp\) such that \(z_t\cdots z_{t+n-1}\neq 0^n\),
we consider the following four cases:

\textup{(1)} Suppose that \([t,t+n-1]\cap
 \left(\{0\}\cup\bigcup_{s=1}^{n}I_s\right)\neq\varnothing\).
Noting that if \([t,t+n-1]\cap I_s\neq\varnothing\), then
\(t\in[b_s-n+1,b_s+s-1]\cap\Zp,\)
whereas \([t,t+n-1]\cap\{0\}\neq\varnothing\) implies \(t=0\).
Consequently, the number of length-$n$ words in this case
is at most
\[
 1+\sum_{s=1}^{n}(s+n-1)\leq 2n^2.
\]

\textup{(2)} Suppose that \([t,t+n-1]\cap \left(\{0\}
\cup\bigcup_{s=1}^{n}I_s\right)=\varnothing\) and
\([t, t+n-1]\cap (\bigcup_{j\in \N}I_{2j-1})\neq \varnothing\).
By the preceding observation, \([t,t+n-1]\) meets exactly one interval
among \(I_{n+1},I_{n+2},\ldots\), and this interval is odd-indexed.
Hence, by \eqref{eq:z-definition}, there exist
\(u,\alpha,\beta\in\Zp\) and \(1\leq d\leq n\) with
$\alpha+d+\beta=n$ such that
\[
z_{t}\cdots z_{t+n-1}=0^\alpha a_u\cdots a_{u+d-1}0^\beta.
\]
For any fixed \(1\leq d\leq n\), the equation \(\alpha+d+\beta=n\) has \(n-d+1\)
solutions in \((\alpha,\beta)\in\Zp^2\). Hence, the number
of length-$n$ words in this case is at most
\[
 \sum_{d=1}^{n}(n-d+1)p_a(d)\leq n^2p_a(n).
\]

\textup{(3)} Suppose that \([t,t+n-1]\cap \left(\{0\}\cup
\bigcup_{s=1}^{n}I_s\right)=\varnothing\) and
\([t, t+n-1]\cap \left(\bigcup_{j\in \N}
\bigcup_{m=1}^{\lceil\sqrt{n}\rceil}
I_{2^m(2j-1)}\right)\neq
\varnothing\). By the preceding observation, \([t,t+n-1]\) meets exactly
one interval among \(I_{n+1},I_{n+2},\ldots\), and this interval is
\(I_{2^m(2j-1)}\) for unique \(m,j\in\N\) with
\(1\leq m\leq\lceil\sqrt{n}\rceil\).

For each fixed \(1\leq m\leq\lceil\sqrt{n}\rceil\), let
\[
\begin{split}
 \mathscr{S}_{3,m}=\bigg\{z_t\cdots z_{t+n-1}:{}&t\in\Zp,\
 z_t\cdots z_{t+n-1}\neq0^n,\
 [t,t+n-1]\cap\bigg(\{0\}\cup\bigcup_{s=1}^{n}I_s\bigg)=\varnothing,\\
 & [t,t+n-1]\cap \bigg(\bigcup_{j\in \N}I_{2^m(2j-1)}
 \bigg)\neq\varnothing
 \bigg\}.
\end{split}
\]

For any \(w\in\mathscr{S}_{3,m}\), define
\[
\begin{split}
 t(w)=\min\bigg\{t\in\Zp:{}&
 z_t\cdots z_{t+n-1}=w,\
 [t,t+n-1]\cap\bigg(\{0\}\cup\bigcup_{s=1}^{n}I_s\bigg)
 =\varnothing,\\
 & [t,t+n-1]\cap \bigg(\bigcup_{j\in \N}I_{2^m(2j-1)}
 \bigg)\neq\varnothing\bigg\}.
\end{split}
\]
Then, there exists a unique \(j(w)\in\N\) such that
\([t(w),t(w)+n-1]\) meets \(I_{2^m(2j(w)-1)}\). Put
\(s(w)=2^m(2j(w)-1),\)
\(c(w)=\max\{b_{s(w)},t(w)-m\},\) and
\(e(w)=\min\{b_{s(w)}+s(w)-m-1,t(w)+n-1\}.\)

We first note that, for any \(k\in\mathscr{G}_{m}^{L}\cap I_{s(w)}\),
\begin{equation}\label{eq:equivalence}
 [k,k+m]\cap[t(w),t(w)+n-1]\neq\varnothing
 \Longleftrightarrow k\in[c(w),e(w)].
\end{equation}
In fact, by the construction of \(\mathscr{G}_{m}^{L}\) and
\(k\in\mathscr{G}_{m}^{L}\cap I_{s(w)}\), we have
\(b_{s(w)}\leq k\leq b_{s(w)}+s(w)-m-1.\)
Together with
\[
 [k,k+m]\cap[t(w),t(w)+n-1]\neq\varnothing
 \Longleftrightarrow
 t(w)-m\leq k\leq t(w)+n-1,
\]
this implies that $[k,k+m]\cap[t(w),t(w)+n-1]
\neq\varnothing$
if and only if
\[
 \max\{b_{s(w)},t(w)-m\}\leq k
 \leq\min\{b_{s(w)}+s(w)-m-1,t(w)+n-1\},
\]
which is equivalent to \(k\in[c(w),e(w)]\).

Next, since \(z_{t(w)}\cdots z_{t(w)+n-1}=w\neq0^n\),
there exists \(t(w)\leq i\leq t(w)+n-1\) such that
\(z_i=1\). This, together with \eqref{eq:z-support} and
the definition of \(t(w)\), implies
\(i\in I_{s(w)}\). Hence, by \eqref{eq:z-definition}, there exists
\(k\in\mathscr{G}_{m}^{L}\cap I_{s(w)}\) such that
\(i\in[k,k+m].\)
Since also \(i\in[t(w),t(w)+n-1]\),
\([k,k+m]\cap[t(w),t(w)+n-1]\neq\varnothing.\)
The equivalence above therefore gives \(k\in[c(w),e(w)]\),
and thus
\([c(w),e(w)]\cap\mathscr{G}_{m}^{L}\neq\varnothing.\)
Hence the integer
\[
 \nu(w)=\min\left\{r\in\{1,\ldots,N(m,j(w))\}:
 \gamma_r^{(m,j(w))}\in[c(w),e(w)]\right\}
\]
is well defined. Set
\[
 v(w)=a_{c(w)}\cdots a_{c(w)+n+2m-1},\quad
 \delta(w)=c(w)-t(w),
\]
\[
\epsilon(w)=e(w)-t(w), \quad
 \rho(w)=\nu(w)\bmod m(m+1).
\]
Clearly \(c(w)\leq e(w)\) by
\([c(w),e(w)]\cap\mathscr{G}_{m}^{L}\neq\varnothing\). Moreover
\(t(w)-m\leq c(w)\leq e(w)\leq t(w)+n-1.\)
Consequently
\[
\begin{cases}
 -m\leq\delta(w)\leq
 \epsilon(w)\leq n-1,\\
 0\leq e(w)-c(w)\leq n+m-1,\\
 e(w)+m\leq c(w)+n+2m-1.
\end{cases}
\]

Then, define
\[
 \Phi_m:\mathscr{S}_{3,m}\rightarrow \mathscr{P}_m,
 \quad
 \Phi_m(w)=(v(w),\rho(w),\delta(w),\epsilon(w)),
\]
where
\[
\mathscr{P}_m=
\left\{(v,\rho,\delta,\epsilon):
\begin{array}{l}
v\in\{a_r\cdots a_{r+n+2m-1}:r\in\Zp\},\\
\rho\in\{0,\ldots,m(m+1)-1\},\\
(\delta,\epsilon)\in\mathbb Z^2,\
-m\leq\delta\leq\epsilon\leq n-1
\end{array}
\right\}.
\]

We claim that $\Phi_m$ is injective.

For any \(w,w'\in\mathscr{S}_{3,m}\) with \(w\neq w'\),
by $w=z_{t(w)}\cdots z_{t(w)+n-1}$ and
$w'=z_{t(w')}\cdots z_{t(w')+n-1}$,
there exists \(0\leq h\leq n-1\) such that
\(z_{t(w)+h}\neq z_{t(w')+h}.\)
Without loss of generality, assume
\(z_{t(w)+h}=1\) and \(z_{t(w')+h}=0.\)

If
\((v(w),\delta(w),\epsilon(w))\neq
 (v(w'),\delta(w'),\epsilon(w')),\)
then \(\Phi_m(w)\neq\Phi_m(w')\).
Now suppose that
\((v(w),\delta(w),\epsilon(w))=
 (v(w'),\delta(w'),\epsilon(w')).\)
For each \(x\in\{w,w'\}\), the definitions of \(c(x)\) and \(e(x)\) give
\([c(x),e(x)]\subseteq [b_{s(x)},b_{s(x)}+s(x)-m-1],\)
and
\(e(x)-c(x)=\epsilon(x)-\delta(x).\) Hence, for any
\(\gamma\in\Zp\), the definition of
\(\gamma_{1}^{(m,j(x))},\ldots,
\gamma_{N(m,j(x))}^{(m,j(x))}\) gives
\[
\begin{split}
 &\gamma\in
 \{\gamma_r^{(m,j(x))}:1\leq r\leq N(m,j(x))\}
 \cap[c(x),e(x)]\\
 &\quad\Longleftrightarrow
 \gamma\in[c(x),e(x)]\cap N_\sigma(a,\Cyl{a_0\cdots a_m})\\
 &\quad\Longleftrightarrow
 \gamma=c(x)+d\text{ for some }
 d\in[0,\epsilon(x)-\delta(x)]\cap\Zp
 \text{ such that }\\
 &\hspace{48mm}
 v(x)_d\cdots v(x)_{d+m}=a_0\cdots a_m,
\end{split}
\]
implying
\begin{equation}\label{eq:local-return-coordinates}
\begin{split}
 &\{\gamma_r^{(m,j(x))}:1\leq r\leq N(m,j(x))\}
 \cap[c(x),e(x)]\\
 &\quad =\{c(x)+d:0\leq d\leq\epsilon(x)-\delta(x),\
 v(x)_d\cdots v(x)_{d+m}=a_0\cdots a_m\}.
\end{split}
\end{equation}

By $(v(w),\delta(w),\epsilon(w))=
 (v(w'),\delta(w'),\epsilon(w'))$,
we have
$\{d\in[0, \epsilon(w)-\delta(w)]\cap \Zp:
 v(w)_d\cdots$ $v(w)_{d+m}=a_0\cdots a_m\}
 =\{d\in[0, \epsilon(w')-\delta(w')] \cap \Zp:
 v(w')_d\cdots v(w')_{d+m}=a_0\cdots a_m\}$,
and enumerate it as \(\{d_1<\cdots<d_R\}.\)
This, together with the definition of \(\nu(x)\)
and the increasing order of
\(\gamma_1^{(m,j(x))},\ldots,\gamma_{N(m,j(x))}^{(m,j(x))}\),
implies that, for \(x\in\{w,w'\}\) and \(1\leq r\leq R\),
\begin{equation}\label{eq:gamma-index-shift}
 c(x)+d_r=\gamma_{\nu(x)+r-1}^{(m,j(x))}
\end{equation}
and by \(\rho(x)\equiv\nu(x)\pmod{m(m+1)}\),
\begin{equation}\label{eq:selected-gamma-index}
 c(x)+d_r\in\mathscr{G}_m^L
 \Longleftrightarrow
 \nu(x)+r-1\equiv 0\!\!\!\pmod{m(m+1)}
 \Longleftrightarrow
 \rho(x)+r-1 \equiv 0\!\!\!\pmod{m(m+1)}.
\end{equation}

Since \(z_{t(w)+h}=1\),
\eqref{eq:z-support} and the fact that
\([t(w),t(w)+n-1]\) meets only \(I_{s(w)}\) give
\(t(w)+h\in I_{s(w)}\). Hence, by \eqref{eq:z-definition}, there exists
\(k\in\mathscr{G}_m^L\cap I_{s(w)}\) such that
\(t(w)+h\in[k,k+m]\) and \(z_{t(w)+h}=z_{t(w)+h-k}=1.\)
This implies \(t(w)+h\in [k,k+m]\cap[t(w),t(w)+n-1]\neq\varnothing\),
and thus \(k\in[c(w),e(w)]\) by \eqref{eq:equivalence}.
Consequently,
\(k\in\mathscr{G}_m^L\cap[c(w),e(w)]\). By the construction of
\(\mathscr{G}_m^L\),
\[
 k\in
 \{\gamma_r^{(m,j(w))}:1\leq r\leq N(m,j(w))\}
 \cap[c(w),e(w)].
\]
From \eqref{eq:gamma-index-shift}, there exists a unique
\(1\leq r_0\leq R\) such that \(k=c(w)+d_{r_0}\). Since
\(k\in\mathscr{G}_m^L\), applying \eqref{eq:selected-gamma-index} to this
\(r_0\) yields \(\rho(w)+r_0-1\equiv0\pmod{m(m+1)}.\)

Moreover, \(k=c(w)+d_{r_0}\) and
\(\delta(w)=c(w)-t(w)\), together with the equality
\(z_{t(w)+h}=z_{t(w)+h-k}=1\) above, give
\[
 z_{t(w)+h}
 =z_{h-\delta(w)-d_{r_0}}=1.
\]
If
\(\rho(w')+r_0-1\equiv0\pmod{m(m+1)}\), then
\eqref{eq:selected-gamma-index} gives
\(k'=c(w')+d_{r_0}\in\mathscr{G}_m^L\). Moreover,
\begin{equation}\label{eq:relative-coordinate}
 t(w')+h-k'
 =h-\delta(w')-d_{r_0}
 =h-\delta(w)-d_{r_0}
 =t(w)+h-k.
\end{equation}
Since \(t(w)+h\in[k,k+m]\), this gives
\(t(w')+h\in[k',k'+m]\). Together
with~\eqref{eq:relative-coordinate} and
\eqref{eq:z-definition}, we have
\[
z_{t(w')+h}= z_{t(w')+h-k'}=z_{h-\delta(w')-d_{r_0}}
 =z_{h-\delta(w)-d_{r_0}}=1,
\]
contrary to \(z_{t(w')+h}=0\). Therefore
\(\rho(w')+r_0-1\not\equiv0\pmod{m(m+1)},\)
and hence \(\rho(w)\neq\rho(w')\). Thus
\(\Phi_m(w)\neq\Phi_m(w')\), proving that \(\Phi_m\) is injective.

Applying this claim yields
\[
 \#\mathscr{S}_{3,m}\leq \#\mathscr{P}_m
 \leq m(m+1)\binom{n+m+1}{2}
 p_a(n+2m).
\]
Finally, let
\(\mathscr{S}_3=\bigcup_{m=1}^{\lceil\sqrt{n}\rceil}\mathscr{S}_{3,m}.\)
Hence the number of length-$n$
words in this case is at most $\# \mathscr{S}_3$, and
\[
\begin{aligned}
 \#\mathscr{S}_3
 &\leq\sum_{m=1}^{\lceil\sqrt{n}\rceil}\#\mathscr{S}_{3,m}
 \leq\sum_{m=1}^{\lceil\sqrt{n}\rceil}
 m(m+1)\binom{n+m+1}{2}p_a(n+2m)\\
 &\leq
 2\sqrt{n}\,(2\sqrt{n})^{2}
 \frac{(3n)^{2}}{2}
 p_a(n+2\lceil\sqrt{n}\rceil)
 =36n^{7/2}p_a(n+2\lceil\sqrt{n}\rceil).
\end{aligned}
\]

\textup{(4)} Suppose that \([t,t+n-1]\cap
\left(\{0\}\cup\bigcup_{s=1}^{n}I_s\right)=\varnothing\) and
\([t,t+n-1]\cap \left(\bigcup_{j\in\N}
\bigcup_{m=\lceil\sqrt{n}\rceil+1}^{\infty}
I_{2^m(2j-1)}\right)$ $\neq\varnothing\).
By the preceding observation, \([t,t+n-1]\) meets
exactly one interval among \(I_{n+1},I_{n+2},\ldots\), and this interval
is \(I_{2^m(2j-1)}\) for unique \(m,j\in\N\) with
\(m\geq\lceil\sqrt{n}\rceil+1\).

We first prove by strong induction on
\(m\geq\lceil\sqrt{n}\rceil+1\) that, for
every \(1\leq\ell\leq n\), every nonzero word
\(z_t\cdots z_{t+\ell-1}\)
satisfying
\([t,t+\ell-1]\cap
\left(\{0\}\cup\bigcup_{s=1}^{n}I_s\right)=\varnothing\)
and \([t,t+\ell-1]\cap
\left(\bigcup_{j\in\N}I_{2^m(2j-1)}\right)\neq\varnothing\)
can be written as
\[
 z_t\cdots z_{t+\ell-1}=0^\alpha u0^\beta,
 \quad \alpha,\beta\in\Zp,\quad 1\leq d\leq\ell,\quad
 \alpha+d+\beta=\ell,
\]
where \(u=z_q\cdots z_{q+d-1}{\neq0^d}\)
and \(q\in\Zp\) satisfies one of the following:

(a) $[q,q+d-1]\cap
\left(\{0\}\cup \bigcup_{s=1}^{n}I_s\right)
\neq\varnothing;$

(b) $[q,q+d-1]\cap \left(\{0\}\cup
\bigcup_{s=1}^{n}I_s\right)
=\varnothing$ and
\([q,q+d-1]\cap\left(\bigcup_{j'\in\N}I_{2j'-1}\right)
\neq\varnothing;\)

(c) $[q,q+d-1]\cap
\left(\{0\}\cup \bigcup_{s=1}^{n}I_s\right)
=\varnothing$ and
\([q,q+d-1]\cap
\left(
\bigcup_{j'\in\N}
\bigcup_{m'=1}^{\lceil\sqrt{n}\rceil}
I_{2^{m'}(2j'-1)}
\right)
\neq\varnothing.\)

\smallskip

For
\(m=\lceil\sqrt{n}\rceil+1\), let \(1\leq\ell\leq n\) and
\(z_t\cdots z_{t+\ell-1}\) be a nonzero word satisfying
the two conditions above. The
preceding observation gives a unique \(j\in\N\) such that
\([t,t+\ell-1]\cap I_{2^m(2j-1)}\neq\varnothing.\)
Since $z_t\cdots z_{t+\ell-1}\neq 0^{\ell}$, there exists
\(t\leq i\leq t+\ell-1\) such that \(z_i=1\). This, together with
\eqref{eq:z-definition} and \eqref{eq:z-support}, implies that
there exists \(k\in\mathscr{G}_{m}^{L}\cap I_{2^m(2j-1)}\)
such that \(i\in[k,k+m]\). Moreover, consecutive
components of \(\mathscr{G}_{m,j}\) have distance at least
\(m(m+1)-m=m^2>n\geq\ell.\)
Hence \([t,t+\ell-1]\) meets a unique component \([k,k+m]\). Put
\[
 q_0=\max\{0,t-k\},\quad
 d_0=\min\{m,t+\ell-1-k\}-q_0+1,
\]
\[
 \alpha_0=\max\{0,k-t\},\quad
 \beta_0=\ell-\alpha_0-d_0.
\]
Then
\(z_{t}\cdots z_{t+\ell-1}
 =0^{\alpha_0}z_{q_0}\cdots z_{q_0+d_0-1}0^{\beta_0}\)
and \([q_0,q_0+d_0-1]\subseteq[0, m].\)
Since \(i\in[k,k+m]\) and \(z_i=z_{i-k}=1\),
the word $z_{q_0}\cdots z_{q_0+d_0-1}$ is nonzero.
If the word $z_{q_0}\cdots z_{q_0+d_0-1}$ satisfies one of
\textup{(a)}--\textup{(c)}, this case follows.
Otherwise, by \(z_{q_0}\cdots z_{q_0+d_0-1}\neq 0^{d_0}\)
and \eqref{eq:z-support}, there exist
\(m'\geq \lceil\sqrt{n}\rceil+1\) and \(j'\in\N\)
such that
\([q_0,q_0+d_0-1]\cap I_{2^{m'}(2j'-1)}\neq\varnothing.\)
As in the preceding discussion, there exists
\(k'\in\mathscr{G}_{m'}^{L}\cap I_{2^{m'}(2j'-1)}\) such that
\([q_0,q_0+d_0-1]\cap[k',k'+m']\neq\varnothing\), implying
\([q_0,q_0+d_0-1]\cap\mathscr{G}_{m',j'}\neq\varnothing\).
Noting that
\([q_0,q_0+d_0-1]\subseteq[0,m]\) and
\(\mathscr{G}_{m',j'}\subseteq\mathscr{G}_{m'}\), we have
\([0,m]\cap\mathscr{G}_{m'}\neq\varnothing\). This, together with
\eqref{eq:[0,m]-values}, implies
\[
 \lceil\sqrt{n}\rceil+1 \leq m'<m=\lceil\sqrt{n}\rceil+1,
\]
which is impossible. Thus it holds for \(m=\lceil\sqrt{n}\rceil+1\).

\smallskip

Now let \(m>\lceil\sqrt{n}\rceil+1\), and suppose that the assertion
holds for all \(\lceil\sqrt{n}\rceil+1\leq m'<m.\)
Let \(1\leq\ell\leq n\) and
\(z_t\cdots z_{t+\ell-1}\) be a nonzero word satisfying
the two conditions above. Arguing as in the case of
\(m=\lceil\sqrt{n}\rceil+1\), we obtain
\[
 z_t\cdots z_{t+\ell-1}
 =0^{\alpha_0}z_{q_0}\cdots z_{q_0+d_0-1}0^{\beta_0},
 \quad \alpha_0+d_0+\beta_0=\ell,
\]
where \([q_0,q_0+d_0-1]\subseteq[0,m]\) and
\(z_{q_0}\cdots z_{q_0+d_0-1}\) is nonzero. If the word
\(z_{q_0}\cdots z_{q_0+d_0-1}\) satisfies one of
\textup{(a)}--\textup{(c)}, the assertion follows. Otherwise, the same
argument gives \(m'\geq\lceil\sqrt{n}\rceil+1\), \(j'\in\N\), and
\(k'\in\mathscr{G}_{m'}^{L}\cap I_{2^{m'}(2j'-1)}\) such that
\([q_0,q_0+d_0-1]\cap[k',k'+m']\neq\varnothing\), implying
\([q_0,q_0+d_0-1]\cap\mathscr{G}_{m',j'}\neq\varnothing\). Noting that
\([q_0,q_0+d_0-1]\subseteq[0,m]\) and
\(\mathscr{G}_{m',j'}\subseteq\mathscr{G}_{m'}\), we have
\([0,m]\cap\mathscr{G}_{m'}\neq\varnothing\). This, together with
\eqref{eq:[0,m]-values}, implies
\[
 \lceil\sqrt{n}\rceil+1\leq m'<m.
\]

Since condition \textup{(a)} does not hold for
\(z_{q_0}\cdots z_{q_0+d_0-1}\), we get
\([q_0,q_0+d_0-1]\cap
 \left(\{0\}\cup\bigcup_{s=1}^{n}I_s\right)=\varnothing.\)
Moreover, by \([q_0,q_0+d_0-1]\cap\mathscr{G}_{m',j'}\neq\varnothing\)
and \(\mathscr{G}_{m',j'}\subseteq I_{2^{m'}(2j'-1)}
 \subseteq\bigcup_{j\in\N}I_{2^{m'}(2j-1)},\)
we obtain
\([q_0,q_0+d_0-1]\cap
 \left(\bigcup_{j\in\N}I_{2^{m'}(2j-1)}\right)\neq\varnothing.\)
Then the induction hypothesis for $m'$ gives
\[
 z_{q_0}\cdots z_{q_0+d_0-1}=0^{\alpha_1}u0^{\beta_1},
 \quad \alpha_1+d+\beta_1=d_0,
\]
where \(u\) satisfies one of \textup{(a)}--\textup{(c)}. Thus
\[
 z_t\cdots z_{t+\ell-1}
 =0^{\alpha_0+\alpha_1}u0^{\beta_0+\beta_1},
 \quad (\alpha_0+\alpha_1)+d+
 (\beta_0+\beta_1)=\ell,
\]
which completes the strong induction.

Fix any \(1\leq\ell\leq n\). If a word \(u=z_q\cdots
z_{q+\ell-1}\) satisfies \textup{(a)}, then either \(q=0\), or
\([q,q+\ell-1]\cap I_s\neq\varnothing\) for some \(1\leq s\leq n\).
In the latter case, \({q\in[b_s-\ell+1,b_s+s-1]\cap\Zp.}\)
Therefore, the number of words \(u\) satisfying
\textup{(a)} is at most
\[
1+\sum_{s=1}^{n}(s+\ell-1)\leq 2n^2.
\]

Since \(\ell\leq n\), condition \textup{(b)} implies
\([q,q+\ell-1]\cap
 \left(\{0\}\cup\bigcup_{s=1}^{\ell}I_s\right)=\varnothing.\)
Therefore, applying the argument in case \textup{(2)}
with the word length (\(n\)) replaced by \(\ell\), the number of words
\(u\) satisfying \textup{(b)} is at most
\[
 {\sum_{d=1}^{\ell}(\ell-d+1)p_a(d)
 \leq \ell^2p_a(\ell)\leq n^2p_a(n).}
\]

For words \(u\) satisfying \textup{(c)}, fix
\(1\leq h\leq\lceil\sqrt{n}\rceil\). The injectivity argument in case
\textup{(3)} for each fixed \(m\) does not use the relation
between \(m\) and the word length. Hence, with the word
length \(n\) and the parameter \(m\)
replaced by \(\ell\) and \(h\), respectively, it gives the
bound
\[
 h(h+1)\binom{\ell+h+1}{2}p_a(\ell+2h).
\]
Summing over
\(1\leq h\leq\lceil\sqrt{n}\rceil\) as in case \textup{(3)}, the number
of words \(u\) satisfying \textup{(c)} is at most
\[
 36n^{7/2}
 p_a\bigl(n+2\lceil\sqrt{n}\rceil\bigr).
\]

By the monotonicity of \(p_a\) and \(p_a(n)\geq1\), each
of the preceding three bounds is at most
\[
 36n^{7/2}
 p_a\bigl(n+2\lceil\sqrt{n}\rceil\bigr).
\]
For a fixed word \(u\) of length \(\ell\), the
equation \(\alpha+\ell+\beta=n\) has \(n-\ell+1\) solutions in
\((\alpha,\beta)\in\Zp^2\). Therefore, the number of words in case
\textup{(4)} is at most
\[
 3\sum_{\ell=1}^{n}(n-\ell+1)
 36n^{7/2}p_a\bigl(n+2\lceil\sqrt{n}\rceil\bigr)
 \leq 108n^{11/2}
 p_a(n+2\lceil\sqrt{n}\rceil).
\]

For \(n\geq2\), after including the word \(0^n\) in the estimate for case
\textup{(1)}, each of the four resulting bounds is at
most the bound obtained in case \textup{(4)}. Hence,
\begin{equation*}
 p_z(n)\leq 4\cdot108n^{11/2}
 p_a (n+2\lceil\sqrt{n}\rceil)
 =432n^{11/2}p_a (n+2\lceil\sqrt{n}\rceil).
\end{equation*}
Therefore, by \eqref{eq:entropy-sigma},
\begin{align*}
 h_{\mathrm{top}}(\sigma|_{X_z})
 &=\lim_{n\to\infty}\frac{1}{n}\log p_z(n)
 \leq \limsup_{n\to\infty}\frac{1}{n}\log
 (432n^{11/2}p_a (n+2\lceil\sqrt{n}\rceil))\\
 &=\limsup_{n\to\infty}
 \left(\frac{\log(432n^{11/2})}{n}
 +\frac{n+2\lceil\sqrt{n}\rceil}{n}
 \frac{\log p_a(n+2\lceil\sqrt{n}\rceil)}
 {n+2\lceil\sqrt{n}\rceil}\right)\\
 &=h_{\mathrm{top}}(\sigma|_{X_a}).
\end{align*}

Combining the two inequalities proves
assertion~\textup{(\ref{item:sync-entropy})} and completes the proof.
\end{proof}

We now encode return times in a minimal zero-entropy
system by a minimal binary point.

For a surjective TDS \((X, f)\), recall its \emph{natural
extension} \((\tilde{X},\tilde{f})\) defined as follows:
\[
 \tilde{X}
 =\left\{(x_1,x_2,\cdots)\in\prod_{n\in \N} X:
 f(x_{n+1})=x_n,\ n\in\N\right\},
\]
and
\[
 \tilde{f}(x_1,x_2,\cdots)=(f(x_1),x_1,x_2,\cdots).
\]
The map $\tilde{f}$ is a homeomorphism, and the projection
$p_1: \tilde{X}\to X$ sending $(x_1, x_2, \cdots)$
to $x_1$ is a factor map.
It is well known that $(X, f)$ and $(\tilde{X},\tilde{f})$
share many dynamical properties. In particular,
$h_{\mathrm{top}}(f)=h_{\mathrm{top}}(\tilde{f})$
(\cite[Corollary~3.2]{YeInverseLimitEntropy}),
and $(X, f)$ is minimal if and only if
$(\tilde{X}, \tilde{f})$ is minimal
(\cite[Section~2.2]{HuangKolyadaZhang}).
\begin{lemma}%%[Zero-entropy binary realization]
\label{lem:binary-realization}
Let \((X,f)\) be a minimal zero-entropy system and $x\in X$.
Then, for any neighborhood $U$ of $x$, there exists a minimal
point \(a\in\Sigma_2\) such that
\begin{enumerate}
\renewcommand{\labelenumi}{\textup{(\roman{enumi})}}
\renewcommand{\theenumi}{\roman{enumi}}
\item\label{item:a0=1}
 $a_0=1$;
\item\label{item:a-support}
\(N_\sigma(a, \Cyl{1}) \subseteq N_f(x,U)\);
\item\label{item:a-entropy}
\(h_{\mathrm{top}}(\sigma|_{\overline{\orb(a,\sigma)}})=0\).
\end{enumerate}
\end{lemma}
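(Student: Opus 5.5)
The plan is to pass to the natural extension so that we can use a small boundary property, code the dynamics by a neighbourhood with null boundary, and extract a minimal point of the resulting symbolic joining over a lift of $x$.

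\emph{Step 1 (reduction to a homeomorphism with small boundaries).} Let $(\tilde X,\tilde f)$ be the natural extension of $(X,f)$ and $p_1\colon\tilde X\to X$ the projection. By the facts recalled above, $(\tilde X,\tilde f)$ is a minimal homeomorphism with $h_{\mathrm{top}}(\tilde f)=0$. Fix $\tilde x\in p_1^{-1}(x)$; then $p_1^{-1}(U)$ is an open neighbourhood of $\tilde x$.

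If $X$ is finite, $x$ is periodic with some period $p$, and we simply take $a=(10^{p-1})^\infty$. Otherwise $(\tilde X,\tilde f)$ is an aperiodic minimal homeomorphism of zero entropy, hence of zero mean dimension. I would then invoke Lindenstrauss' theorem that such a system has the small boundary property. This gives an open set $V$ with $\tilde x\in V$, $\overline V\subseteq p_1^{-1}(U)$, and $\mu(\partial V)=0$ for every $\tilde f$-invariant Borel probability measure $\mu$.

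\emph{Step 2 (a zero-entropy symbolic joining).} Define
\[
K=\{(y,\omega)\in\tilde X\times\Sigma_2:\ \omega_n=1\Rightarrow \tilde f^n(y)\in\overline V,\ \ \omega_n=0\Rightarrow \tilde f^n(y)\notin V\ \ (\forall n\in\Zp)\}.
\]
The set $K$ is closed, since each condition is closed. It is $(\tilde f\times\sigma)$-invariant, and it projects onto $\tilde X$, since $(y,1_V\text{-coding of }y)\in K$.

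The main point is $h_{\mathrm{top}}(\tilde f\times\sigma|_K)=0$. Let $\nu$ be any invariant measure on $K$, with projection $\mu$ on $\tilde X$. Since $\mu(\partial V)=0$, for $\nu$-a.e.\ $(y,\omega)$ no iterate $\tilde f^n(y)$ lies in $\partial V$. For such $y$ the fibre of $K$ over $y$ is a single point, because $\omega$ is then determined by $\omega_n=1_V(\tilde f^n y)$. Hence the projection $(K,\nu)\to(\tilde X,\mu)$ is a measure-theoretic isomorphism mod $0$, and $h_\nu=h_\mu\le h_{\mathrm{top}}(\tilde f)=0$. The variational principle then yields $h_{\mathrm{top}}(K)=0$.

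\emph{Step 3 (choosing a minimal point over $\tilde x$).} Take a minimal subset $M\subseteq K$. Its projection to $\tilde X$ is closed, invariant and nonempty, so it equals $\tilde X$ by minimality. Pick $(\tilde x,a)\in M$. Then $(\tilde x,a)$ is a minimal point, so its image $a$ under the factor map to $\Sigma_2$ is a minimal point of $\Sigma_2$. Moreover $\overline{\orb(a,\sigma)}$ is a factor of $M$, whence $h_{\mathrm{top}}(\sigma|_{\overline{\orb(a,\sigma)}})\le h_{\mathrm{top}}(K)=0$, which is \textup{(\ref{item:a-entropy})}.

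Since $\tilde x\in V$, the rule ``$a_0=0\Rightarrow\tilde x\notin V$'' forces $a_0=1$, which is \textup{(\ref{item:a0=1})}. Finally, if $a_n=1$ then $\tilde f^n(\tilde x)\in\overline V\subseteq p_1^{-1}(U)$, so $f^n(x)=p_1(\tilde f^n\tilde x)\in U$, which is \textup{(\ref{item:a-support})}.

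The step I expect to be the main obstacle is Step 1, obtaining a neighbourhood whose boundary is null for all invariant measures simultaneously. A naive choice of radius $r$ for balls $B(\tilde x,r)$ fails when there are uncountably many ergodic measures. This is exactly where the natural extension is needed: it makes the system invertible so that Lindenstrauss' small boundary property theorem applies. If that citation were unavailable, the fallback would be to obtain boundaries of uniformly small upper Banach frequency, giving fibre entropy at most $\varepsilon$, and to combine codings over a sequence $\varepsilon_k\to0$. Doing this while keeping $a$ minimal would be considerably messier.
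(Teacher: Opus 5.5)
Your proof is correct. It follows the paper's outline: pass to the natural extension, build a zero-entropy extension of $(\tilde X,\tilde f)$ in which a neighbourhood of a lift of $x$ becomes clopen, take a minimal subset, pick a point in the fibre over the lift, and read off $a$ from the coding. You obtain that extension differently.

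The paper cites \cite[Proposition~2.4]{GlasnerWeissQF} for a zero-dimensional extension $(Z,g)$ of $(\tilde X,\tilde f)$ with $h_{\mathrm{top}}(g)=0$. It then chooses a clopen $V\subseteq (p_1\circ\pi|_{Z^*})^{-1}(U)$, so continuity of the coding map and the entropy bound come at once, with no case distinction.

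You instead invoke Lindenstrauss' small boundary property. Zero entropy gives mean dimension zero, and $(\tilde X,\tilde f)$ is aperiodic minimal once $X$ is infinite. You then construct the extension by hand as the closed set $K$, in which $\{\omega_0=1\}$ is clopen, and prove $h_{\mathrm{top}}(K)=0$ via the variational principle. That entropy step is sound. A cleaner phrasing: every invariant $\nu$ on $K$ equals $\Psi_*\mu$ for the Borel equivariant map $\Psi(y)=\bigl(y,(1_V(\tilde f^{n}y))_{n\in\Zp}\bigr)$. Hence $(K,\nu)$ is a measure-theoretic factor of $(\tilde X,\mu)$ and $h_\nu\le h_\mu=0$, with no need to invert $\pi_1$ on a full-measure set.

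The paper's route is shorter and needs no aperiodicity or finite-case split. Yours rests on a deeper external theorem but makes the entropy control explicit. It shows $K\to\tilde X$ preserves the entropy of every invariant measure, so the same construction gives $h_{\mathrm{top}}(\sigma|_{\overline{\orb(a,\sigma)}})\le h_{\mathrm{top}}(f)$ for any infinite minimal system of finite entropy.
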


\begin{proof}
Let \((\tilde{X}, \tilde{f})\) be the natural extension of \((X,f)\).
Then \((\tilde{X},\tilde{f})\) is a minimal zero-entropy system.
By
\cite[Proposition~2.4]{GlasnerWeissQF}, there exist a zero-dimensional
space \(Z\), a homeomorphism \(g\colon Z\to Z\), and a continuous
surjection \(\pi\colon Z\to\tilde{X}\)
such that
\[
 \tilde{f}\circ\pi=\pi\circ g
 \text{ and }
 h_{\mathrm{top}}(g)
 =h_{\mathrm{top}}(\tilde{f})=0.
\]
Fix a minimal point \(z^{*}\in Z\) and put
\(Z^{*}=\overline{\orb(z^{*},g)}\).  Since \(\pi(Z^{*})\) is a nonempty closed
invariant subset of the minimal system \((\tilde{X},\tilde{f})\),
we have \(\pi(Z^{*})=\tilde{X}\), and thus $\pi|_{Z^{*}}
: Z^{*}\to \tilde{X}$
is a factor map. Take
\(z_0\in (p_1\circ\pi|_{Z^*})^{-1}(x)\).
Since \(Z^{*}\) is zero-dimensional and $U$ is
a neighborhood of $x$, there exists a clopen neighborhood
\(V\) of \(z_0\) in \(Z^*\) such that
\(V\subseteq (p_1\circ\pi|_{Z^*})^{-1}(U)\).
Then it can be verified that the map
\(\Phi\colon Z^{*}\to\Sigma_2\) defined by
\begin{equation}\label{eq:Phi-Def}
(\Phi(z))_i=1 \text{ if and only if } g^i(z)\in V
\quad (i \in \Zp,\ z\in Z^{*})
\end{equation}
is continuous and satisfies
\(\Phi\circ(g|_{Z^{*}})=\sigma\circ\Phi\), i.e.,
\(\Phi: Z^{*}\to \Phi(Z^{*})\) is a factor map.
Noting that
\((Z^{*},g|_{Z^{*}})\) is minimal, we have
that \((\Phi(Z^{*}),
\sigma|_{\Phi(Z^{*})})\) is minimal, and thus
\(a=\Phi(z_0)\) is a minimal point of $\sigma$.
Clearly $a_0=(\Phi(z_0))_0=1$ by $z_0\in V$
and \eqref{eq:Phi-Def}.
Moreover
\(
h_{\mathrm{top}}(\sigma|_{\overline{\orb(a,\sigma)}})
\leq h_{\mathrm{top}}(g|_{Z^{*}})
\leq h_{\mathrm{top}}(g)=0.
\)
Meanwhile, for any $n\in N_\sigma(a, \Cyl{1})$,
i.e., \(a_n=1\),
we have $g^{n}(z_0)\in V$ by \eqref{eq:Phi-Def},
and thus
\[
f^n(x)=(f^{n}\circ p_1\circ \pi|_{Z^*}) (z_0)
=(p_1\circ \pi|_{Z^*}\circ g^n)(z_0)
\in (p_1\circ \pi|_{Z^*})(V)\subseteq U,
\]
implying \(N_\sigma(a, \Cyl{1})\subseteq N_f(x,U)\).
\end{proof}

Combining the preceding two lemmas, we obtain a zero-entropy
subshift containing a transitive \(\Fps\)-recurrent point whose returns to a
suitable neighborhood are simultaneously constrained by a return-time set of
the original system and a prescribed thick set.

\begin{theorem}%%[Zero-entropy synchronization]
\label{thm:zero-sync}
Let \((X, f)\) be a minimal zero-entropy system and \(x\in X\).
Then, for any neighborhood \(U\) of \(x\) and any \(T\in \Ft\),
there exist a zero-entropy subshift
\((Z,\sigma|_Z)\) of $\Sigma_2$, a transitive \(\Fps\)-recurrent point
\(z\in Z\), and a neighborhood \(W\) of \(z\) such that
\[
 N_{\sigma|_Z}(z, W)\subseteq N_f(x, U)\cap T.
\]
\end{theorem}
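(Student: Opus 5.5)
The plan is to chain Lemma~\ref{lem:binary-realization} with Lemma~\ref{prop:entropy-sync}.

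\emph{Step 1: encode the returns of $x$ to $U$ in a binary sequence.} Given the minimal zero-entropy system $(X,f)$, the point $x$ and the neighborhood $U$, apply Lemma~\ref{lem:binary-realization}. This produces a minimal point $a\in\Sigma_2$ with three properties:
\begin{itemize}
\item $a_0=1$;
\item $N_\sigma(a,\Cyl{1})\subseteq N_f(x,U)$;
\item $h_{\mathrm{top}}(\sigma|_{\overline{\orb(a,\sigma)}})=0$.
\end{itemize}

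\emph{Step 2: synchronize with the thick set.} Feed $a$ and the given $T\in\Ft$ into Lemma~\ref{prop:entropy-sync}. This yields $z\in\Sigma_2$ with $z_0=1$ such that:
\begin{itemize}
\item $z$ is $\Fps$-recurrent;
\item $N_\sigma(z,\Cyl{1})\subseteq N_\sigma(a,\Cyl{1})\cap T$;
\item $h_{\mathrm{top}}(\sigma|_{\overline{\orb(z,\sigma)}})=h_{\mathrm{top}}(\sigma|_{\overline{\orb(a,\sigma)}})=0$.
\end{itemize}

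\emph{Step 3: define $Z$ and $W$.} Set $Z=\overline{\orb(z,\sigma)}$. This is a closed $\sigma$-invariant subset of $\Sigma_2$, so $(Z,\sigma|_Z)$ is a subshift. Its entropy is zero by Step~2. The point $z$ is transitive in $Z$ by construction.

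The point $z$ is also $\Fps$-recurrent in $(Z,\sigma|_Z)$. Indeed, every neighborhood of $z$ in $Z$ has the form $U'\cap Z$ with $U'$ open in $\Sigma_2$. Since the orbit of $z$ stays in $Z$, we have $N_{\sigma|_Z}(z,U'\cap Z)=N_\sigma(z,U')\in\Fps$.

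Take $W=\Cyl{1}_Z$. It is a clopen neighborhood of $z$ in $Z$ because $z_0=1$.

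\emph{Step 4: verify the inclusion.} For $n\in N_{\sigma|_Z}(z,W)$ we have $z_n=1$. Hence $n\in N_\sigma(z,\Cyl{1})\subseteq N_\sigma(a,\Cyl{1})\cap T\subseteq N_f(x,U)\cap T$.

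A minor point concerns the return-time convention. Both $N_\sigma$ and $N_f$ are defined over $\N$, so the index $0$ causes no trouble.

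There is no real obstacle here. All the substantive work is in the two lemmas, and the only care needed is the routine check that recurrence and transitivity properties transfer to the subsystem $Z$.
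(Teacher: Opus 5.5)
Your proposal is correct and is essentially the paper's proof. You obtain $a$ from Lemma~\ref{lem:binary-realization}, feed $a$ and $T$ into Lemma~\ref{prop:entropy-sync}, set $Z=\overline{\orb(z,\sigma)}$ and $W=\Cyl{1}_Z$, and chain $N_{\sigma|_Z}(z,W)\subseteq N_\sigma(z,\Cyl{1})\subseteq N_\sigma(a,\Cyl{1})\cap T\subseteq N_f(x,U)\cap T$ exactly as the paper does; your explicit check that $\Fps$-recurrence passes to the subsystem $Z$ just spells out a step the paper states without comment.
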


\begin{proof}
Let \(a\) be given by Lemma~\ref{lem:binary-realization} applied to $x$
and $U$, and let \(z\) be given by Lemma~\ref{prop:entropy-sync}
applied to \(a\) and \(T\). Then take \(Z=\overline{\orb(z,\sigma)}\)
and \(W=\Cyl{1}_Z\). Clearly \(W\) is a neighborhood of \(z\) in \(Z\)
by \(z_0=1\) and \(z\) is a transitive point in \((Z, \sigma|_Z)\).
Moreover, Lemma~\ref{prop:entropy-sync}~\textup{(\ref{item:sync-recurrence})}
implies that \(z\) is \(\Fps\)-recurrent in \((Z, \sigma|_Z)\). Furthermore,
Lemma~\ref{lem:binary-realization}~\textup{(\ref{item:a-entropy})} and
Lemma~\ref{prop:entropy-sync}~\textup{(\ref{item:sync-entropy})} give
\(h_{\mathrm{top}}(\sigma|_Z)=0.\) Finally
Lemma~\ref{prop:entropy-sync}~\textup{(\ref{item:sync-support})}
and Lemma~\ref{lem:binary-realization}~\textup{(\ref{item:a-support})}
yield
\(N_{\sigma|_Z}(z, W)\subseteq N_\sigma(z, \Cyl{1})
\subseteq N_\sigma(a, \Cyl{1})\cap T
\subseteq N_f(x, U)\cap T.\)
\end{proof}

In view of \eqref{eq:PR0-implication-1},
Theorem~\ref{thm:collapse} reduces to the following lemma.

\begin{lemma}\label{thm:Fps0=distal}
Let \((X,f)\) be a TDS and \(x\in X\). If
\(h_{\mathrm{top}}(f|_{\overline{\orb(x,f)}})=0\),
then \(x\) is $\Fps\PRzero$ if and only if
$x$ is distal.
\end{lemma}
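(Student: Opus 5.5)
The direction ``distal $\Rightarrow$ $\Fps\PRzero$'' is immediate. Distality is equivalent to $\Finf\PRzero$ by the result of Dong, Shao and Ye recorded in \eqref{eq:PR0-implication-1}, and $\Finf\PRzero\Rightarrow\Fps\PRzero$ because $\Fps\subseteq\Finf$. So the work is in the converse. We assume $x$ is $\Fps\PRzero$ and $h_{\mathrm{top}}(f|_{\overline{\orb(x,f)}})=0$, and we aim to show that $x$ is $\Finf\PRzero$, or directly that $x$ is $\mathrm{IP}^*$-recurrent / product recurrent with zero-entropy recurrent points.

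First we use the result of Dong, Shao and Ye that every $\Fps\PRzero$ point is minimal. Hence $X_x:=\overline{\orb(x,f)}$ is a minimal zero-entropy system, and Theorem~\ref{thm:zero-sync} applies to it. By Furstenberg's characterization, $x$ is distal if and only if $N_f(x,U)$ is an $\mathrm{IP}^*$-set for every neighborhood $U$ of $x$. Equivalently, it suffices to show that $x$ is product recurrent with every recurrent point; by the $\Finf\PRzero$ equivalence, it even suffices to test against recurrent points of zero-entropy systems.

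We argue by contradiction. Suppose $x$ is not distal. Then some neighborhood $U$ of $x$ has $N_f(x,U)$ disjoint from some IP-set. We want to combine this with a thick set $T$ so that the synchronized point $z$ from Theorem~\ref{thm:zero-sync} gives a counterexample. The cleanest route is to take $T\in\Ft$ with $N_f(x,U)\cap T=\varnothing$. If we have such a $T$, Theorem~\ref{thm:zero-sync} yields a zero-entropy subshift $Z$, an $\Fps$-recurrent point $z$, and a neighborhood $W$ of $z$ with $N_{\sigma}(z,W)\subseteq N_f(x,U)\cap T=\varnothing$. This contradicts the recurrence of $z$ itself, so $x$ must be distal.

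The existence of such a $T$ is exactly the statement that $N_f(x,U)$ is not syndetic, because $\kappa\Ft=\Fs$. That fails here, since $x$ is minimal and so every $N_f(x,U)$ is syndetic. This is the main obstacle: the plain disjointness trick does not apply, and the argument has to exploit the pair $(x,z)$ rather than $z$ alone.

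The refined plan is as follows. For non-distal minimal $x$, there is a point $x'\neq x$ in $X_x$ proximal to $x$. Choose disjoint neighborhoods $U\ni x$ and $U'\ni x'$, and let $T\in\Ft$ be a thick set of times along which $f^n(x)$ stays in $U'$, or at least outside $\overline U$ for long runs. Such a $T$ exists because proximality together with minimality of $X_x$ gives arbitrarily long intervals on which $f^n x$ shadows $f^n x'$ and hence avoids $U$. Replacing $U$ by a smaller neighborhood $U_0$ with $\overline{U_0}\subseteq U$, we apply Theorem~\ref{thm:zero-sync} with this $T$ and the set $N_f(x,U_0)$. The resulting $z$ is $\Fps$-recurrent, with returns to $W$ only at times $n$ where $f^n x\in U_0$ and $n\in T$. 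Then $(x,z)$ returns to $U_0\times W$ only at times in $N_f(x,U_0)\cap T$. We need this intersection to be empty, or at least the joint returns to fail. The delicate step is choosing $T$ to be thick while keeping $N_f(x,U_0)\cap T$ empty, using the long blocks where the orbit of $x$ stays near the orbit of $x'$ and away from $x$. We expect to extract these blocks from a proximal pair in which $x'$'s orbit lingers away from $U$ for arbitrarily long stretches; this holds for a suitable choice of $x'$, using an idempotent $u$ in a minimal left ideal with $ux\neq x$. Once such a $T$ is secured, $(x,z)$ is not recurrent, which contradicts $\Fps\PRzero$ and completes the proof.
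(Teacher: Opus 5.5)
\textbf{There is a genuine gap in the refined plan.} Your easy direction and the reduction to a minimal zero-entropy $X_x$ are fine. But you still apply Theorem~\ref{thm:zero-sync} to the point $x$ and a neighborhood $U_0$ of $x$. The joint returns of $(x,z)$ to $U_0\times W$ then lie in $N_f(x,U_0)\cap T$, and you need this set to be empty for some thick $T$. This is exactly what you ruled out one paragraph earlier: $N_f(x,U_0)$ is syndetic because $x$ is minimal, and a syndetic set meets every thick set. For the same reason, no thick set of times exists along which $f^n(x)$ stays outside $\overline U$. The inference that shadowing $f^n(x')$ forces $f^n(x)$ to avoid $U$ is also false, since $f^n(x')$ itself enters $U$ syndetically. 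Choosing $x'$ via an idempotent does not help: the step ``$T$ thick with $N_f(x,U_0)\cap T=\varnothing$'' fails for every choice.

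\textbf{The missing idea is to synchronize $z$ with the proximal partner, not with $x$.}
\begin{enumerate}
\item Negating distality (in the paper's definition) directly gives $y\in X_x\setminus\{x\}$ with $\liminf_{n} d(f^n x,f^n y)=0$; no Ellis semigroup is needed.
\item Take open sets $V\ni x$ and $U\ni y$ with $\overline V\cap\overline U=\varnothing$, and let $\varepsilon$ be half the distance between $\overline V$ and $\overline U$.
\item Put $T=\{n: d(f^nx,f^ny)<\varepsilon\}$. This set is thick by uniform continuity of $f,\dots,f^L$.
\item Apply Theorem~\ref{thm:zero-sync} to the minimal zero-entropy system $X_x$, the point $y$, the neighborhood $U\cap X_x$, and $T$.
\item Every $n\in N_{\sigma|_Z}(z,W)$ then satisfies $f^ny\in U$ and $d(f^nx,f^ny)<\varepsilon$, so $f^nx\notin V$.
\item Hence $N_{f\times(\sigma|_Z)}((x,z),V\times W)=\varnothing$, so $(x,z)$ is not recurrent, contradicting $\Fps\PRzero$.
\end{enumerate}
The set $N_f(y,U)\cap T$ need not be empty. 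It only has to avoid $N_f(x,V)$, and proximality on $T$ guarantees exactly that. This is the paper's argument.
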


\begin{proof}
$(\Longleftarrow)$. It follows directly from \eqref{eq:PR0-implication-1}.

\smallskip

$(\Longrightarrow)$. Let $X_{x}=\overline{\orb(x,f)}$.
Since \(x\) is \(\Fps\PRzero\),
by \cite[Theorem~5.6]{DSY} and
\(h_{\mathrm{top}}(f|_{X_{x}})=0\),
$f|_{X_{x}}$
is a minimal zero-entropy system.
Suppose, on the contrary, that \(x\) is not distal.
Then there exists \(y\in X_{x}\setminus\{x\}\) such that $\liminf_{n\to \infty}
d(f^{n}(x), f^{n}(y))=0$. Choose open neighborhoods \(V\) of \(x\)
and \(U\) of \(y\) with $\overline{U}\cap \overline{V}=\varnothing$,
and choose
\(\varepsilon=\frac{1}{2}\min\{d(x_1, x_2):
x_1\in\overline{V}, x_2\in\overline{U}\}>0\).
By $\liminf_{n\to \infty}
d(f^{n}(x), f^{n}(y))=0$, the set
$T=\{n\in \N: d(f^{n}(x), f^{n}(y))
<\varepsilon\}$ is thick.

Applying Theorem~\ref{thm:zero-sync} to the minimal zero-entropy system
\((X_{x}, f|_{X_{x}})\), the point \(y\), the neighborhood \(U\cap X_{x}\) of
$y$, and the thick set \(T\), we obtain a zero-entropy subshift
\((Z,\sigma|_Z)\) of $\Sigma_2$, a transitive
\(\Fps\)-recurrent point \(z\in Z\), and a neighborhood \(W\) of \(z\)
such that \(N_{\sigma|_Z}(z,W)\subseteq N_{f|_{X_{x}}}(y,U\cap X_{x})\cap T
\subseteq N_{f}(y, U)\cap T.\)
This implies that, for any \(n\in N_{\sigma|_Z}(z, W)\),
\(f^{n}(y)\in U\) and \(d(f^{n}(x), f^{n}(y))<\varepsilon\). Together with
the choice of $\varepsilon$, we get \(f^{n}(x)\notin V\). Consequently
\(
 N_{f\times(\sigma|_Z)}((x,z), V\times W)=\varnothing,
\)
so \((x,z)\) is not recurrent, contradicting
\(x\in \Fps \PRzero\). Therefore \(x\) is distal.
\end{proof}

\begin{proof}[Proof of Theorem~\ref{thm:collapse}]
It follows directly from Lemma~\ref{thm:Fps0=distal} and
\eqref{eq:PR0-implication-1}.
\end{proof}

\begin{remark}\label{Fps0=distai-Remark}
Choose a weakly mixing doubly minimal system
\((X, f)\) having zero topological entropy (\cite{Weiss}).
Then, for any minimal system \((Y, g)\), every pair in
\(X\times Y\) is recurrent (\cite[Theorem~0.4]{GlasnerWeiss2015}).
This implies that every point in $X$ is \(\Fs\PRzero\).
On the other hand, noting that every weakly mixing minimal system
has no distal points
(\cite[Theorem~9.12]{FurstenbergBook}),
Lemma~\ref{thm:Fps0=distal} thus implies
that every point of \(X\) is not \(\Fps\PRzero\). Therefore,
even among points in minimal zero-entropy systems, it holds that
\(\Fps\PRzero \rightovernotleft
\Fs\PRzero.\)
\end{remark}

\section[Proofs of Theorems~\ref{thm:disjointness} and \ref{thm:main}]
{Proofs of Theorems~\ref{thm:disjointness}
and \ref{thm:main}}

Theorem~\ref{thm:collapse} implies that the orbit closure
of every point in \(\Fps\PRzero\setminus\Fpubd\PRzero\) has positive
topological entropy. This explains why positive topological entropy
is indispensable for the strict separation in Theorem~\ref{thm:main}.

This section constructs two subshifts $(Y, \sigma|_{Y})$
and $(X, \sigma|_{X})$ satisfying the following conditions:
\begin{enumerate}
\renewcommand{\labelenumi}{\textup{(\roman{enumi})}}
  \item $(Y, \sigma|_{Y})\in E_{0}\setminus M_{0}$;
  \item $(X, \sigma|_{X})$ is minimal and has u.p.e.;
  \item There exist $a\in \operatorname{Tran}(Y, \sigma|_{Y})$
  and $\hat{x}\in X$, and
   neighborhoods $\Cyl{1}_{Y}$ and $\Cyl{1}_{X}$ of
   $a$ and $\hat{x}$, respectively, such that
   $N_{\sigma|_{Y}}(a, \Cyl{1}_{Y})\notin \Fps$
   and
   $N_{\sigma|_{Y}}(a, \Cyl{1}_{Y})\cap
   N_{\sigma|_{X}}(\hat{x}, \Cyl{1}_{X})=\varnothing$;
  \item \((X,\sigma|_{X})\not\perp(Y,\sigma|_{Y}).\)
\end{enumerate}
The final subsection combines these constructions to prove
Theorems~\ref{thm:disjointness} and \ref{thm:main}, establishing that
$(X, \sigma|_{X})\in \Mzero^{\perp}
\setminus \Ezero^{\perp}$ and
\({\hat{x}}\in\Fps\PRzero\setminus
\Fpubd\PRzero\).

\smallskip

Let
\begin{equation}
\label{eq:q-definition}
 q_{0}=1,\ q_{1}=4, \text{ and } q_{k+1}=2q_{k}^{2}
 \ (k\in \N).
\end{equation}
It can be verified that:
\begin{enumerate}
\renewcommand{\theenumi}{Q-\arabic{enumi}}
\renewcommand{\labelenumi}{\textup{(\theenumi)}}
\item\label{item:q-divisibility}
\(q_{i} | \frac{q_{k}}{2}\) for \(0\leq i<k\).
\item\label{item:q-estimates}
\(\sum_{1\leq j\leq k}q_{j-1}<2q_{k-1}\) for \(k\geq 1\), and
\(q_{k-1}\geq 2^{k}\) for \(k \geq 2\).
\end{enumerate}

For any \(k\in \N\), let
\[
{G_{k,m}}
 =
 {\left[
 mq_{k}+\frac{q_{k}}{2},
 mq_{k}+\frac{q_{k}}{2}+q_{k-1}-1
 \right]\cap \Zp}
 \ (m\in \Zp),\quad
 G_{k}=\bigcup_{m\in \Zp}{G_{k,m}},
\]
and
\begin{equation}
\label{eq:DA}
  G=\bigcup_{k\in \N} G_{k},
 \quad
 \Ascr=\Zp\setminus\! G.
\end{equation}
By the construction of $G_{k}$, it can be verified
\begin{equation}
\label{eq:Dk-structure}
 n\in G_{k} \text{ if and only if } n+q_{k}\in  G_{k}
 \ (n\in\Zp), \quad
 \#(G_{k,m})=q_{k-1} \ (m\in \Zp).
\end{equation}

\begin{lemma}
\(D_{*}(\Ascr)\geq \frac{1}{2}\).
\end{lemma}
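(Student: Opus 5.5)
We prove that the complement $G$ has upper density at most $\tfrac12$, using subadditivity over $k$, since $\#(\Ascr\cap[0,n])\geq (n+1)-\#(G\cap[0,n])$. Every $G_k$ is periodic with period $q_k$ by \eqref{eq:Dk-structure}. Each period block $[mq_k,(m+1)q_k-1]$ contains exactly one block $G_{k,m}$, which has $q_{k-1}$ elements. Hence the density of $G_k$ is exactly $q_{k-1}/q_k$.

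For finite windows we need a bound that is uniform in $n$. Take $N\ge 1$ and write $N=sq_k+r$ with $0\le r<q_k$. Then
\[
\#(G_k\cap[0,N-1])\le s\,q_{k-1}+\min\{r,q_{k-1}\}.
\]
Indeed, the partial period $[sq_k,sq_k+r-1]$ meets $G_{k,s}$ in at most $\min\{r,q_{k-1}\}$ points. We also use the trivial fact that $G_{k,s}$ starts at offset $q_k/2$, so it is not met at all when $r\le q_k/2$. It follows that
\[
\#(G_k\cap[0,N-1])\le \frac{q_{k-1}}{q_k}N+\min\{N,q_{k-1}\}.
\]
This crude bound costs too much when summed over $k$, so we need a sharper one. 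Since $G_{k,s}$ begins at offset $q_k/2$ and has length $q_{k-1}\le q_k/2$, a partial period of length $r$ contains at most $\max\{0,\min\{r-q_k/2,q_{k-1}\}\}\le 2q_{k-1}r/q_k$ points of $G_k$. This holds because on $r>q_k/2$ we have $q_{k-1}\le 2q_{k-1}r/q_k$. Hence
\[
\#(G_k\cap[0,N-1])\le \frac{2q_{k-1}}{q_k}N.
\]
This bound is still too weak, since $2q_0/q_1=1/2$ already.

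The better route is to handle $k=1$ exactly and treat the tail with the estimate above. For $k=1$ we have $q_0=1$ and $q_1=4$, so $G_1=\{2,6,10,\dots\}$ and $\#(G_1\cap[0,N-1])\le N/4+1$. For $k\ge 2$, use $q_k=2q_{k-1}^2$ to get $2q_{k-1}/q_k=1/q_{k-1}\le 2^{-k}$ by (Q-2). Then
\[
\sum_{k\ge2}\frac{2q_{k-1}}{q_k}\le \frac14+\frac18\cdot 0+\cdots .
\]
More precisely, the terms are $1/q_1=1/4$, $1/q_2=1/32$, and so on. Their total is at most $1/4+\sum_{k\ge3}2^{-k}\cdot\tfrac12$, which is roughly $0.28$. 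Combined with $G_1$ this exceeds $1/2$, so the sum needs tightening. For $k=2$ we compute exactly, as for $k=1$: $q_1=4$ and $q_2=32$, so $G_2$ consists of 4-blocks starting at $32m+16$, with density $1/8$ and error at most $4$. With exact densities and $O(q_{k-1})$ errors for $k\le K$, and the $1/q_{k-1}$ bound for $k>K$, we get
\[
D^*(G)\le \sum_{k\le K}\frac{q_{k-1}}{q_k}+\sum_{k>K}\frac1{q_{k-1}}.
\]
The first sum is $1/4+1/8+1/64+\cdots$, well below $1/2$. The tail tends to $0$ as $K\to\infty$, since $q_k$ grows doubly exponentially. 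Thus $D^*(G)\le \sum_k q_{k-1}/q_k<1/2$, and therefore $D_*(\Ascr)=1-D^*(G)\ge 1/2$.

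The main obstacle is organizing the error terms so that they are uniform in $N$ while the sum of the densities is used exactly. Fixing a finite $K$, bounding the first $K$ levels by density plus a constant, and bounding the remaining levels by $N/q_{k-1}$ resolves this. Letting $N\to\infty$ and then $K\to\infty$ gives the claim. The numerical fact $\sum_k q_{k-1}/q_k=\sum_k 1/(2q_{k-1})\le \tfrac14+\tfrac18+\sum_{k\ge3}2^{-k-1}<\tfrac12$ follows from (Q-2).
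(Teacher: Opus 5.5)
Your proof is correct. Its skeleton matches the paper's: bound $\#(G\cap[0,N])$ by $\sum_k\#(G_k\cap[0,N])$, estimate each level by its density $q_{k-1}/q_k$ times the length plus an error, and conclude with $\sum_{k\ge1}q_{k-1}/q_k\le\frac14+\sum_{k\ge2}\frac{1}{2q_{k-1}}\le\frac12$.

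The difference is in how the error terms are controlled.

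\begin{itemize}
\item The paper uses a cutoff that depends on $N$. Only levels with $q_k/2\le N$ meet $[0,N]$, and each contributes the crude additive error $q_{k-1}$. The total error is $\sum_{k\le K(N)}q_{k-1}<2q_{K(N)-1}\le 2\sqrt N$, because $q_{K(N)-1}=\sqrt{q_{K(N)}/2}$. This yields $\frac{\#(G\cap[0,N])}{N+1}\le\frac12+\frac{2\sqrt N}{N+1}$ in a single limit, with an explicit rate.
\item You fix a cutoff $K$ and keep the additive error for $k\le K$. For $k>K$ you use the offset $q_k/2$ to get the bound $\#(G_k\cap[0,N-1])\le\frac{2q_{k-1}}{q_k}N=\frac{N}{q_{k-1}}$, which is proportional to $N$ uniformly in $N$ and has a summable tail. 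You then let $N\to\infty$ and afterwards $K\to\infty$.
\end{itemize}

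Your route is a standard tail argument. It needs only the summability of $\sum_k 1/q_{k-1}$, not the comparison of $\sum_{k\le K(N)}q_{k-1}$ with $\sqrt N$. The cost is a double limit and no explicit rate.

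Two small corrections.
\begin{itemize}
\item The identity $\sum_k q_{k-1}/q_k=\sum_k 1/(2q_{k-1})$ fails at $k=1$. Since $q_1=4\neq 2q_0^2$, the first term is $\frac14$, not $\frac12$.
\item $\frac14+\frac18+\sum_{k\ge3}2^{-k-1}$ equals exactly $\frac12$. So your estimate gives $D^*(G)\le\frac12$, not $<\frac12$. That is all the lemma needs.
\end{itemize}

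In a final write-up, delete the exploratory dead ends: the bound with $2q_0/q_1=\frac12$ and the ``roughly $0.28$'' computation.
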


\begin{proof}
For \(N\geq q_{3}\), by $\min G_{k}=\frac{q_{k}}{2}$,
only the sets \(G_{k}\) satisfying \(\frac{q_{k}}{2}\leq N\) meet
\([0,N]\). For each such \(k\), by the construction of $G_{k}$,
we get
\[
 \#(G_{k} \cap [0,N]) \leq
 \left\lceil\frac{N+1}{q_{k}}\right\rceil\cdot q_{k-1}
 \leq(N+1)\frac{q_{k-1}}{q_{k}}+q_{k-1},
\]
implying
\[
 \frac{\#(G\cap[0, N])}{N+1}
 \leq \sum_{1\leq k\leq K(N)}\frac{\#(G_{k}\cap[0,N])}{N+1}
 \leq
 \sum_{1\leq k\leq K(N)}\frac{q_{k-1}}{q_{k}}
 +\frac{\sum_{1\leq k\leq K(N)}q_{k-1}}{N+1},
\]
where \(K(N)=\max\{k\in \Zp: \frac{q_{k}}{2}\leq N\}\).
Noting that \(q_{K(N)-1}=\sqrt{\frac{q_{K(N)}}{2}} \leq \sqrt{N}\),
together with \textup{(\ref{item:q-estimates})}, we have
\begin{align*}
 \frac{\#(G\cap[0,N])}{N+1}
 \leq & \sum_{k\geq1}\frac{q_{k-1}}{q_{k}}
 +\frac{2q_{K(N)-1}}{N+1}
 \leq
 \frac{q_{0}}{q_{1}}+\sum_{k\geq2}\frac{q_{k-1}}{q_{k}}+
 \frac{2\sqrt{N}}{N+1}\\
 = &
 \frac{1}{4}+\sum_{k\geq2}\frac{1}{2q_{k-1}}
 + \frac{2\sqrt{N}}{N+1}
 \leq
 \frac{1}{4}+\sum_{k\geq2}\frac{1}{2^{k+1}}
 + \frac{2\sqrt{N}}{N+1}\\
 = & \frac{1}{2}+ \frac{2\sqrt{N}}{N+1},
\end{align*}
and thus $D^{*}(G)=\limsup_{N\to \infty}
 \frac{\#(G\cap[0,N])}{N+1}\leq \frac{1}{2}$.
Therefore
\[
D_{*}(\Ascr)=1-D^{*}(G)\geq \frac{1}{2}>0.
\]
\end{proof}

\begin{lemma}
\label{lem:H-not-ps}
\(\Ascr\notin \Fps\).
\end{lemma}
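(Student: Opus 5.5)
We argue by contradiction, using the characterization recalled in the preliminaries: \(F\in\Fps\) if and only if \(\bigcup_{i=0}^{M}(F-i)\in\Ft\) for some \(M\in\Zp\). So suppose \(\Ascr\in\Fps\) and fix such an \(M\). Then for every length \(L\) there is an interval \(J\) of length \(L\) each of whose points lies within distance \(M\) to the left of some element of \(\Ascr\). Equivalently, \(\Ascr\) meets every window \([n,n+M]\) with \(n\in J\). To contradict this, it suffices to exhibit, inside every sufficiently long interval, a block of at least \(M+1\) consecutive integers contained in \(G=\Zp\setminus\Ascr\).

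The blocks \(G_{k,m}\) supply exactly this. Choose \(k\) with \(q_{k-1}\geq M+1\), which is possible since \(q_{k-1}\geq 2^{k}\) by (\ref{item:q-estimates}). The set \(G_k\subseteq G\) consists of the intervals \(G_{k,m}\), each of length \(q_{k-1}\), placed periodically with period \(q_k\) by \eqref{eq:Dk-structure}. Hence every interval of length at least \(q_k+q_{k-1}\) contains some whole \(G_{k,m}\), and thus a run of \(q_{k-1}\geq M+1\) consecutive elements of \(G\). Take \(L=q_k+q_{k-1}+M\) and let \(J\) be the corresponding interval of length \(L\) provided by thickness. Then \(J\) contains some \(G_{k,m}=[s,s+q_{k-1}-1]\). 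Put \(n=s\in J\). The window \([n,n+M]\subseteq G_{k,m}\subseteq G\) is disjoint from \(\Ascr\), so \(n\notin\bigcup_{i=0}^{M}(\Ascr-i)\). This contradicts \(J\subseteq\bigcup_{i=0}^{M}(\Ascr-i)\).

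Therefore \(\Ascr\notin\Fps\). The argument is elementary, and the only point needing care is the bookkeeping: the periodic structure \eqref{eq:Dk-structure} must guarantee a full block \(G_{k,m}\) inside any long interval, and \(q_{k-1}\) must exceed \(M\). Both follow directly from the definitions. Equivalently, one can phrase the proof as showing that \(G\) is thick in a uniform sense: for each \(k\) it contains runs of length \(q_{k-1}\) with gaps bounded by \(q_k\). The complement of such a set cannot be piecewise syndetic, since any syndetic set has bounded gaps while \(G\) contains arbitrarily long runs syndetically.
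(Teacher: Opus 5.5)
Your proof is correct and takes essentially the same route as the paper. Both arguments use the characterization $\bigcup_{i=0}^{M}(\Ascr-i)\in\Ft$ and the fact that the blocks $G_{k,m}$, of length $q_{k-1}>M$, recur with period $q_k$; hence the integers $n$ with $[n,n+M]\subseteq G$ form a syndetic set, which no thick set can avoid. The paper states this as the syndeticity of $\bigcap_{i=0}^{L}(G-i)$, taking $k=L+1$ and using $q_L>L$, while you find such an $n$ directly inside a long interval — the same argument in different words.
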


\begin{proof}
Suppose, on the contrary, that \(\Ascr\in \Fps\),
then there exists \(L\in\Zp\) such that
\(\bigcup_{i=0}^{L}(\Ascr-i)\) is thick.
This, together with $\bigcup_{i=0}^{L}(\Ascr-i)=
\bigcup_{i=0}^{L}(\Zp\setminus G-i)
=\bigcup_{i=0}^{L}(\Zp\setminus (G-i))
=\Zp\setminus\bigcap_{i=0}^{L}(G-i)
\subseteq \Zp\setminus
\bigcap_{i=0}^{L}(\bigcup_{m\in \N}{G_{L+1,m}}-i)
=\Zp\setminus
\bigcap_{i=0}^{L}(\bigcup_{m\in \N}({G_{L+1,m}}-i))$,
implies that $\bigcap_{i=0}^{L}(\bigcup_{m\in \N}
({G_{L+1,m}}-i))$ is not syndetic.
Meanwhile, noting that $\bigcap_{i=0}^{L}(\bigcup_{m\in \N}
({G_{L+1,m}}-i)) \supseteq
\bigcup_{m\in \N}(\bigcap_{i=0}^{L}({G_{L+1,m}}-i))
\supseteq \bigcup_{m\in \N}
[mq_{L+1}+\frac{q_{L+1}}{2},
 mq_{L+1}+\frac{q_{L+1}}{2}+q_{L}-1-L]
\supseteq \bigcup_{m\in \N}\{mq_{L+1}+\frac{q_{L+1}}{2}\}$
by \(q_{L}>L\), we have that
$\bigcap_{i=0}^{L}(\bigcup_{m\in \N}
({G_{L+1,m}}-i))$ is syndetic, which is a contradiction.
\end{proof}

\subsection{A subshift in $\boldsymbol{E_{0}\setminus M_{0}}$}
\label{sec:zero-entropy}

Choose a point \(a=(a_{i})_{i\in \Zp}\in \Sigma_2\)
as $a_{i}=1$ if and only if $i\in H$ and set
$Y=\overline{\orb(a,\sigma)}$.

We shall prove that \(a\) is an
\(\Fpubd\)-recurrent transitive point of the zero-entropy subshift
\((Y,\sigma|_{Y})\), that
\(N_{\sigma|_{Y}}(a,\Cyl{1}_{Y})\notin\Fps\), and consequently that
\((Y,\sigma|_{Y})\in\Ezero\setminus\Mzero\).

\begin{lemma}
\label{lem:zero-entropy}
Let $Y=\overline{\orb(a,\sigma)}$.
Then \(h_{\mathrm{top}}(\sigma|_{Y})=0.\)
\end{lemma}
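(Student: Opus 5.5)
We bound the complexity function \(p_a(n)\) and then use \eqref{eq:entropy-sigma}, which says \(h_{\mathrm{top}}(\sigma|_Y)=\lim_{n\to\infty}\frac1n\log p_a(n)\). So it suffices to show that \(p_a(n)\) grows subexponentially. The structure of \(a\) is that of a Toeplitz-like sequence. By \eqref{eq:Dk-structure}, the indicator of \(G_k\) is periodic with period \(q_k\). By (\ref{item:q-divisibility}), \(q_i\mid q_k\) for \(i<k\), so the indicator of \(G_1\cup\cdots\cup G_K\) is periodic with period \(q_K\).

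Fix \(n\) and choose \(K=K(n)\) to be the largest index with \(q_{K-1}\le n\) (roughly). We compare each length-\(n\) window \(a_t\cdots a_{t+n-1}\) with the corresponding window of the \(q_K\)-periodic sequence \(b^{(K)}\), where \(b^{(K)}_i=1\) iff \(i\notin G_1\cup\cdots\cup G_K\). The windows of \(b^{(K)}\) take at most \(q_K\) values. Here \(q_K=2q_{K-1}^2\le 2n^2\), or at worst a polynomial in \(n\) after adjusting the choice of \(K\).

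The difference between \(a\) and \(b^{(K)}\) on \([t,t+n-1]\) comes from the sets \(G_k\) with \(k>K\). The blocks \(G_{k,m}\) with \(k>K\) have length \(q_{k-1}\ge q_K\) and are spaced \(q_k\) apart. Since \(q_{k}\ge q_{K+1}=2q_K^2\), I expect to show that \(q_{K+1}/2\) and the gaps exceed \(n\), so a window of length \(n\) meets at most a bounded number of these intervals. Indeed, the intervals \(G_{k,m}\) for different \(k>K\) are nested in scale, with each period much larger than the previous block length. Hence \([t,t+n-1]\cap\bigcup_{k>K}G_k\) is a union of at most two (say a bounded number \(C\)) subintervals. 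Each is determined by its endpoints, giving at most \(O(n^{2C})\) possibilities.

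It follows that \(p_a(n)\le q_K\cdot O(n^{2C})\), which is polynomial in \(n\). Therefore \(\frac1n\log p_a(n)\to0\), and \(h_{\mathrm{top}}(\sigma|_Y)=0\).

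The main obstacle is the combinatorial claim that a length-\(n\) window meets only boundedly many maximal intervals of \(\bigcup_{k>K}G_k\). Two blocks \(G_{k,m}\) and \(G_{k',m'}\) with \(K<k<k'\) could overlap or be adjacent. Even so, their union within the window remains a union of boundedly many intervals, since within the window each \(k>K\) with \(q_k/2>n\)-scale contributes at most one block.

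The clean way to organize this is as follows. Choose \(K\) with \(q_K>n\). For each \(k>K\), the window meets at most one \(G_{k,m}\), because the spacing is \(q_k>n\). I would first try to show that only \(k=K+1\) and at most one larger \(k\) can actually meet a given window. If this fails, I would instead bound the number of relevant \(k\) by \(O(\log n)\). That gives \(p_a(n)\le q_K\,n^{O(\log n)}\), which is still subexponential.

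For the polynomial bound on \(q_K\), take \(K\) minimal with \(q_K>n\). Then \(q_{K-1}\le n\), so \(q_K=2q_{K-1}^2\le 2n^2\), and the estimate follows.
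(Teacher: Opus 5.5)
Your architecture is the same as the paper's. You split each window $[t,t+n-1]$ into two parts. The first comes from $G_1\cup\cdots\cup G_K$; it is $q_K$-periodic, with $q_K$ polynomial in $n$. The second comes from $\bigcup_{k>K}G_k$; it should admit only polynomially many patterns.

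\textbf{The gap.} The gap is at exactly the step you flag as the main obstacle. You justify that $[t,t+n-1]\cap\bigcup_{k>K}G_k$ has boundedly many components by bounding how many indices $k>K$ have $G_k$ meeting the window. That bound is false. By (Q-1), $q_k\mid q_{k+1}/2$, so each block $G_{k+1,m}$ is precisely an aligned interval $[Mq_k,(M+1)q_k-1]$. This is a full period of $G_k$, so it contains the block $G_{k,M}$. Iterating, a block of $G_{K+r}$ contains a block of $G_{K+r-1}$, which contains one of $G_{K+r-2}$, and so on. Hence for every $r$ there are points in $G_{K+1}\cap\cdots\cap G_{K+r}$. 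For instance, with $q_2=32$, $q_3=2048$, $q_4=2\cdot 2048^2$, one checks $2048^2+1040\in G_2\cap G_3\cap G_4$.

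So even a window of length $1$ can meet arbitrarily many $G_k$ with $k>K$. Neither ``only $K+1$ and at most one larger $k$'' nor your $O(\log n)$ fallback holds. The observation that each $k$ contributes at most one block therefore gives no bound on the number of components.

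\textbf{The missing idea.} Control the \emph{shape} of the union rather than the number of indices. You already noted that every block of $G_k$ with $k>K$ has length $q_{k-1}\ge q_K>n$. Such a block cannot fit inside $[t+1,t+n-2]$. So if it meets the window, it contains $t$ or $t+n-1$, i.e.\ it meets the window in an initial or a terminal segment. Hence $[t,t+n-1]\cap\bigcup_{k>K}G_k$ is the union of one initial segment and one terminal segment, however many $k$ are involved. This gives at most $(n+1)^2$ patterns and $p_a(n)\le q_K(n+1)^2$, which is polynomial in $n$.

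\textbf{How the paper does it.} The paper argues slightly differently. By (Q-1), every endpoint $mq_j+\frac{q_j}{2}$ and $mq_j+\frac{q_j}{2}+q_{j-1}$ with $j>K$ is a multiple of $q_K$. So a window of length $n<q_K$ contains at most one transition point of $\bigcup_{j>K}G_j$. The trace is therefore empty, full, a prefix, or a suffix, giving at most $2n$ choices. Taking $K$ minimal with $q_K>4n$, the paper obtains $p_a(n)\le 32n^2\cdot 2n=64n^3$. With either fix, your proof goes through.
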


\begin{proof}
Fix any \(n\in\N\). Let
\(K=\min\{k\in\Zp:q_{k}>4n\}\geq2\) and
\(\partial G_{j}=\{
mq_{j}+\frac{q_{j}}{2},
mq_{j}+\frac{q_{j}}{2}+q_{j-1}:m\in\Zp
\}\) for \(j>K\). From
\textup{(\ref{item:q-divisibility})} and \(q_{K}>4n\), it follows
\(\bigcup_{j>K}\partial G_{j}\subseteq q_{K}\Zp\) and
\(\#([s,s+n-1]\cap\bigcup_{j>K}\partial G_{j})\leq 1\)
($\forall s\in\Zp$).

For any \(s\in\Zp\), put
\[
L_{s}=\left\{i\in[0,n-1]:
s+i\in\bigcup_{1\leq k\leq K}G_{k}\right\}
\text{ and }
U_{s}=\left\{i\in[0,n-1]:
s+i\in\bigcup_{j>K}G_{j}\right\}.
\]

By \textup{(\ref{item:q-divisibility})} and
\eqref{eq:Dk-structure}, we have
\[
i\in\bigcup_{1\leq k\leq K}G_{k}
\text{ if and only if }
i+q_{K}\in\bigcup_{1\leq k\leq K}G_{k}
\quad (\forall i\in\Zp),
\]
implying \(L_{s+q_{K}}=L_{s}\ (\forall s\in\Zp).\)
Together with the choice of \(K\) and
\eqref{eq:q-definition}, we get
\begin{equation}\label{eq:Ls-Card}
\#(\{L_{s}:s\in\Zp\})\leq
q_{K}=2q_{K-1}^{2}\leq2(4n)^{2}=32n^{2}.
\end{equation}

For any \(1\leq i<n\) with
\(\#(\{i-1,i\}\cap U_{s})=1\), we consider the following two cases:
\begin{itemize}
  \item If \(i-1\notin U_{s}\) and \(i\in U_{s}\), then there exists \(j>K\)
  such that \(s+i-1\notin G_{j}\) and \(s+i\in G_{j}\), and thus
  \(s+i\in\partial G_{j}\);
  \item If \(i-1\in U_{s}\) and \(i\notin U_{s}\), then there exists \(j>K\)
  such that \(s+i-1\in G_{j}\) and \(s+i\notin G_{j}\), and thus
  \(s+i\in\partial G_{j}\).
\end{itemize}
In either case, \(s+i\in\bigcup_{j>K}\partial G_{j}\). Therefore
\(\#(\left\{i\in[1,n-1]:
\#(\{i-1,i\}\cap U_{s})=1\right\})
\leq\#(\big\{i\in[1,n-1]:
s+i\in\bigcup_{j>K}\partial G_{j}\big\})
\leq
\#([s,s+n-1]\cap\bigcup_{j>K}\partial G_{j})
\leq 1,\)
implying that
\begin{itemize}
  \item If $\#(\left\{i\in[1,n-1]:
\#(\{i-1,i\}\cap U_{s})=1\right\})=0$,
then $i-1\in U_{s}$ if and only if $i\in U_{s}$ for any $i\in [1, n-1]$,
and thus $U_{s}=\varnothing$ or $U_{s}=[0, n-1]$;
  \item If $\#(\left\{i\in[1,n-1]:
\#(\{i-1,i\}\cap U_{s})=1\right\})=1$, then
there exists $r\in[1,n-1]$ such that
$i-1\in U_{s}$ if and only if $i\in U_{s}$ for any
$i\in[1,n-1]\setminus\{r\}$, whereas
$r-1\in U_{s}$ if and only if $r\notin U_{s}$. Thus
$U_{s}=[0,r-1]$ or $U_{s}=[r,n-1]$.
\end{itemize}
In particular,
\(
U_{s}\in
\{\varnothing,[0,n-1]\}
\cup
\{[0,r-1],[r,n-1]:1\leq r<n\},
\)
so
\[
\#(\{U_{s}:s\in\Zp\})\leq2n.
\]
Together with \eqref{eq:Ls-Card},
noting that
\(
\{i\in[0, n-1]: a_{s+i}=0\}=
\{i\in[0, n-1]: s+i\in G\}
=L_{s}\cup U_{s} \ (\forall s\in \Zp),
\)
we have
\[
p_{a}(n)\leq
\#(\{L_{s}:s\in\Zp\})\cdot
\#(\{U_{s}:s\in\Zp\})\leq 64 n^{3}.
\]
This, together with \eqref{eq:entropy-sigma},
implies
\[
h_{\mathrm{top}}(\sigma|_{Y})
=\lim_{n\to\infty}\frac{\log p_{a}(n)}{n}=0.
\]
\end{proof}

\begin{lemma}
\label{lem:a-recurrent}
Let $Y=\overline{\orb(a,\sigma)}$.
Then \(a\) is an \(\Fpubd\)-recurrent point of
$\sigma|_{Y}$.
\end{lemma}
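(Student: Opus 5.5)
Fix a neighborhood of $a$ in $Y$; it contains a cylinder $\Cyl{a_0\cdots a_{M}}_Y$ for some $M\in\N$. We must show that $R=N_{\sigma}(a,\Cyl{a_0\cdots a_M})$ has positive upper Banach density. We will in fact show $D^{*}(R)>0$, using the periodic structure of the low levels $G_1,\dots,G_K$ together with the sparseness of the high levels.

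First choose $K$ with $q_K>4(M+1)$, and set $P=q_K$. By (\ref{item:q-divisibility}) and \eqref{eq:Dk-structure}, the set $\bigcup_{k\leq K}G_k$ is invariant under translation by $P$. Hence, for every $n\in P\Zp$, the word $a_n\cdots a_{n+M}$ coincides with $a_0\cdots a_M$ except possibly at positions $i$ with $n+i\in\bigcup_{j>K}G_j$.

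Next observe that $[0,M]\cap\bigcup_{j>K}G_j=\varnothing$, since $\min G_j=q_j/2\geq q_{K+1}/2>M$. Therefore
\[
R\supseteq\Bigl\{n\in P\Zp:\ [n,n+M]\cap \bigcup_{j>K}G_j=\varnothing\Bigr\}=:R'.
\]
It then remains to bound from above the density of those $n\in P\Zp$ for which $[n,n+M]$ meets some $G_j$ with $j>K$. Such an $n$ lies in $\bigcup_{j>K}\bigl(G_j-[0,M]\bigr)$. Each $G_{j,m}$ is an interval of length $q_{j-1}\geq q_K>4(M+1)$, so thickening it by $M$ at most doubles its length. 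Consequently the density of $\bigcup_{j>K}(G_j-[0,M])$ is at most about $2\sum_{j>K}q_{j-1}/q_j=\sum_{j>K}1/q_{j-1}$, which is small.

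The subtlety is that we need density relative to the arithmetic progression $P\Zp$, not absolute density. Since $P=q_K$ divides $q_j/2$ for $j>K$ and also divides $q_{j-1}$ when $j-1\geq K$, the endpoints of each $G_{j,m}$ lie in $P\Zp$. So each thickened block $G_{j,m}-[0,M]$ contains about $q_{j-1}/P+1\leq 2q_{j-1}/P$ multiples of $P$. The proportion of bad $n\in P\Zp$ within a period $q_j$ is then at most $2q_{j-1}/q_j=1/q_{j-1}$. Summing over $j>K$ gives a relative upper density of bad points of at most $\sum_{j\geq K}1/q_j\leq\sum_{j\ge K} 2^{-j-1}<1/2$, by (\ref{item:q-estimates}) as in the proof that $D_*(\Ascr)\geq\frac12$. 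The boundary terms for finite $N$ are handled exactly as in that lemma, with the $\sqrt N$-type error.

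Hence $D_*(R')\geq \frac{1}{2P}>0$, so $R\in\Fpubd$, and $a$ is $\Fpubd$-recurrent. The main obstacle is the careful counting of multiples of $P$ in the bad blocks, uniformly in $N$. If that becomes messy, an alternative is a union bound of the absolute density of $\bigcup_{j>K}(G_j-[0,M])$, which is at most $\sum_{j>K}2/q_{j-1}<1/(2P)$ after enlarging $K$, against the density $1/P$ of $P\Zp$. This also yields $D_*(R)>0$.
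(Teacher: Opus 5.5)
Your main argument is correct and is essentially the paper's proof. You restrict to multiples of $q_K$, use the $q_K$-periodicity of $\bigcup_{k\le K}G_k$ together with $\min G_j=q_j/2>M$ to get $R'\subseteq R$ (the paper's Claim~2), and bound the proportion of multiples of $q_K$ whose window meets some $G_j$ with $j>K$, using the $\sqrt{N}$ boundary error (the paper's Claim~1, with the same conclusion $D_*\ge 1/(2q_K)$).

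Only your fallback at the end is wrong as stated. Already $\sum_{j>K}2/q_{j-1}\ge 2/P$. Moreover, $G_{K+1}-[0,M]$ alone has absolute density $(q_K+M)/q_{K+1}>1/(2P)$ however large $K$ is, because $P=q_K$ grows with $K$. So the relative count through the divisibility of the block endpoints is genuinely needed, or at least a sharp absolute bound strictly below $1/P$.
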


\begin{proof}
Fix any \(r\in \N\). By \(q_{j+1}\geq 2q_{j}\) for \(j\in\Zp\), we have that,
for any $K\in \N$,
\[
 \sum_{j>K}\frac{1}{q_{j}}
 \leq \frac{1}{2q_{K}^{2}}
 +\frac{1}{2^{2}q_{K}^{2}}+ \cdots
 +\frac{1}{2^{j}q_{K}^{2}}+\cdots
 =
 \frac{1}{q_{K}^{2}}.
\]
This, together with \textup{(\ref{item:q-estimates})}, implies
\begin{align*}
 & \sum_{j>K}\frac{q_{j-1}+r+q_{K}}{q_{j}}
 =
 \sum_{j>K}\frac{q_{j-1}}{q_{j}}
 +(r+q_{K})\sum_{j>K}\frac{1}{q_{j}}\\
 \leq &
 \sum_{j>K}\frac{1}{2q_{j-1}}
 +\frac{r+q_{K}}{q_{K}^{2}}
 \leq \frac{1}{2q_{K-1}^{2}}+
 \frac{r+q_{K}}{q_{K}^{2}}
 \to 0 \ (K\to +\infty).
\end{align*}
Then, there exists \(K_{0}\in \N\) such that
\begin{equation}
\label{eq:K-choice}
 \frac{q_{K_{0}+1}}{2}>r
 \text{ and } \sum_{j>K_{0}}
 \frac{q_{j-1}+r+q_{K_{0}}}{q_{j}}<\frac{1}{2}.
\end{equation}

For any \(j>K_{0}\), let
\(E_{j}=
 \{t\in q_{K_{0}}\Zp : [t,t+r-1]\cap G _{j}\neq\varnothing\},
\) $E^{r}=\bigcup_{j>K_{0}}E_{j}$, and \({R^{r}}=
 q_{K_{0}}\Zp\setminus E^{r}\).

\begin{claim}\label{clm:Rr-density}

\(D_{*}(R^{r})\geq\frac{1}{2q_{K_{0}}}.\)
\end{claim}

\textbf{Proof of Claim~\ref{clm:Rr-density}}.
By the construction of $E_{j}$, we have that,
for any $m\in \Zp$ and any $j>K_{0}$, $E_{j}\cap[m q_{j}, (m+1)q_{j}-1]
 \subseteq
 q_{K_{0}}\Zp\cap \left[
 m q_{j}+\frac{q_{j}}{2}-r+1,
 m q_{j}+\frac{q_{j}}{2}+q_{j-1}-1
 \right],$ and thus
 \begin{equation*}
 \#(E_{j}\cap[m q_{j}, (m+1)q_{j}-1])
 \leq \frac{q_{j-1}+r}{q_{K_{0}}}+1,
 \end{equation*}
 implying that, for any $N\in \N$,
 \begin{equation}\label{eq:Ej-cap-M-Card}
 \#(E_{j}\cap[0, N])
 \leq \left\lceil\frac{N}{q_{j}}\right\rceil\cdot
 \left(\frac{q_{j-1}+r}{q_{K_{0}}}+1\right)\leq
 {\left(\frac{N}{q_{j}}+1\right)}\cdot
 \left(\frac{q_{j-1}+r}{q_{K_{0}}}+1\right).
\end{equation}

Meanwhile, by the construction of $E_{j}$, there exist $0\leq i\leq r-1$ such that
$\min E_{j}+i\in G_{j}$, implying $\min E_{j}+(r-1)\geq \min E_{j}+i\geq \min G_{j}
=\frac{q_{j}}{2}$, and thus $\min E_{j}\geq \frac{q_{j}}{2}-(r-1)$. Therefore,
$E_{j}\cap [0, N]=\varnothing$ for $j>K_{0}$ such that $\frac{q_{j}}{2}-(r-1)>N$.
This, together with \textup{(\ref{item:q-estimates})}, \eqref{eq:K-choice}, and
\eqref{eq:Ej-cap-M-Card}, implies
\begin{align*}
 & \frac{1}{N+1}
 \#(E^{r}\cap[0, N])\\
 \leq & \frac{1}{N+1}
 \sum_{j>K_{0}}\#(E_{j}\cap[0, N])
 = \frac{1}{N+1}
 \sum_{\{j>K_{0}: E_{j}\cap [0, N]\neq \varnothing\}}
 \#(E_{j}\cap[0, N])\\
 \leq &
 \frac{1}{N+1}
 \sum_{\{j>K_{0}: q_{j}\leq 2(N+(r-1))\}}\#(E_{j}\cap[0, N])
 =\frac{1}{N+1}
 \sum_{K_{0}<j\leq J_{N}}\#(E_{j}\cap[0, N])\\
 \leq &
 \frac{1}{q_{K_{0}}}
 \sum_{K_{0}<j\leq J_{N}}\frac{q_{j-1}+r+q_{K_{0}}}{q_{j}}
 +\frac{1}{(N+1) q_{K_{0}}}
 \sum_{K_{0}<j\leq J_{N}}(q_{j-1}+r+q_{K_{0}})\\
 \leq & \frac{1}{2q_{K_{0}}}
 +\frac{1}{(N+1) q_{K_{0}}}
 \sum_{K_{0}<j\leq J_{N}}q_{j-1} (1+r+q_{K_{0}})
 \leq \frac{1}{2q_{K_{0}}}+
 \frac{2(1+r+q_{K_{0}})q_{J_{N}-1}}{(N+1) q_{K_{0}}}\\
 \leq & \frac{1}{2q_{K_{0}}}+
 \frac{2(1+r+q_{K_{0}})\sqrt{N+(r-1)}}{(N+1) q_{K_{0}}},
\end{align*}
where $J_{N}=\max\{j>K_{0}: q_{j}\leq 2(N+(r-1))\}$.
Thus
\[
D^{*}(E^{r})=\limsup_{N\to \infty}\frac{1}{N+1}
 \#(E^{r}\cap[0, N])\leq \frac{1}{2q_{K_{0}}}.
 \]
Hence
\[
 D_{*}(R^{r})
 \geq\frac{1}{q_{K_{0}}}
 -D^{*}(E^{r})\geq
 \frac{1}{2q_{K_{0}}}.
\]

\begin{claim}\label{clm:return-inclusion}
$R^{r}\cap\N \subseteq
N_{\sigma|_{Y}}(a,\Cyl{a_{0}a_{1}\cdots a_{r-1}}_{Y})$.
\end{claim}

\textbf{Proof of Claim~\ref{clm:return-inclusion}}.
For any \(t\in R^{r}\cap \N\), by
\(R^{r}\subseteq q_{K_{0}}\Zp\), \textup{(\ref{item:q-divisibility})
gives \(q_{k}| t\) for \(1\leq k\leq K_{0}\).}  Together with
\eqref{eq:Dk-structure}, we have that, for any
$0\leq i<r$ and any $1\leq k\leq K_{0}$, $i\in G_{k}$ if
and only if $t+i\in G_{k}$, and thus
\begin{equation}\label{eq:t+i-equivalent-i}
t+i\in\bigcup_{1\leq k\leq K_{0}} G_{k}
\text{ if and only if }
i\in\bigcup_{1\leq k\leq K_{0}} G_{k}.
\end{equation}

On the other hand,
$\frac{q_{K_{0}+1}}{2}>r$ in \eqref{eq:K-choice}, together with
$\min G_{k}=\frac{q_{k}}{2}$, implies
\([0,r-1]\cap\bigcup_{k>K_{0}} G_{k}=\varnothing,\)
and the definition of \(R^{r}\) gives
\([t,t+r-1]\cap\bigcup_{k>K_{0}} G_{k}=\varnothing.\)
This, together with \eqref{eq:t+i-equivalent-i}
and $G=\bigcup_{k\in \N}G_{k}$, yields that,
for any $0\leq i<r$, $t+i\in G$ if and only if $i\in G$.
Thus, by the choice of $a$, we obtain
\(a_{t}a_{t+1}\cdots a_{t+r-1}=a_{0}a_{1}\cdots a_{r-1}\),
implying $(\sigma|_{Y})^{t}(a)\in \Cyl{a_{0}a_{1}\cdots a_{r-1}}_{Y}$,
i.e., $t\in N_{\sigma|_{Y}}(a, \Cyl{a_{0}a_{1}\cdots a_{r-1}}_{Y})$.
Therefore
\[
R^{r}\cap \N \subseteq
N_{\sigma|_{Y}} (a,\Cyl{a_{0}a_{1}\cdots a_{r-1}}_{Y}).
\]

Summing up Claims~\ref{clm:Rr-density} and \ref{clm:return-inclusion},
for any $r\in \N$,
\[
 BD^{*}(
 N_{\sigma|_{Y}}(a, \Cyl{a_{0}a_{1}\cdots a_{r-1}}_{Y}))
 \geq D_{*}( N_{\sigma|_{Y}}(a, \Cyl{a_{0}a_{1}\cdots a_{r-1}}_{Y}))
 \geq D_{*}({R^{r}}\cap\N)
 =D_{*}({R^{r}})>0,
\]
implying that
\(a\) is \(\Fpubd\)-recurrent.
\end{proof}

\begin{remark}
\label{rem:Y-system}
Lemma~\ref{lem:H-not-ps} implies that \(H\) is not syndetic. Then,
for any $n\in \N$, there exists $i_{n}\in \N$ such that $[i_{n}, i_{n}+n]\subseteq
\Zp\setminus H$, implying $a_{i_{n}}=\cdots =a_{i_{n}+n}=0$, and thus
$0^{\infty}=\lim_{n\to \infty} \sigma^{i_{n}}(a)$.
Moreover, the point \(0^{\infty}\) is
the unique minimal point of \((Y,\sigma|_{Y})\). Indeed,
suppose that there exists a minimal point \(z\in Y\setminus \{0^{\infty}\}\).
Replacing \(z\) by a shift of it, we may assume that \(z_{0}=1\).
Noting that $\Cyl{1}_{Y}$ is a neighborhood of $z$,
by the minimality of $z$, there exists \(L\in\N\) such
that
\[
 [n,n+L]\cap N_{\sigma|_{Y}}(z,\Cyl{1}_{Y})
 \neq \varnothing
 \quad (\forall n\in\Zp).
\]
Meanwhile, for any \(N \in \N\), since \(z\in Y=\overline{\orb(a,\sigma)}\),
there exists \(s_{N} \in \Zp\) such that, for any \(0\leq t\leq N+L\),
\(a_{s_{N}+t}=z_{t}\). Therefore, for any \(0\leq n\leq N\),
there exists \(0\leq i\leq L\) such that
\(s_{N}+n+i\in H\), and hence
\[
 [s_{N},s_{N}+N]\cap \Zp\subseteq \bigcup_{i=0}^{L}(H-i)
 \quad (\forall N\in \N),
\]
i.e., \(\bigcup_{i=0}^{L}(H-i)\) is thick.
Thus $H\in \Fps$, contradicting
Lemma~\ref{lem:H-not-ps}.
\end{remark}

\begin{theorem}
\label{thm:Y-system}
Let $Y=\overline{\orb(a,\sigma)}$.
Then the subshift $(Y, \sigma|_{Y})$ satisfies the following conditions:
\begin{enumerate}
\renewcommand{\labelenumi}{\textup{(\roman{enumi})}}
\renewcommand{\theenumi}{\roman{enumi}}
\item\label{item:Y-return-set}
$N_{\sigma|_{Y}}(a, \Cyl{1}_{Y})
=\Ascr\cap\N \notin \Fps$ for the
open neighborhood $\Cyl{1}_{Y}$ of $a$.
\item\label{item:Y-class} $(Y,\sigma|_{Y})\in
       \Ezero\setminus\Mzero
       \subseteq
       \Ezero\setminus\mathscr{M}_{0}$.
\end{enumerate}
\end{theorem}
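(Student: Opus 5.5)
Part (\ref{item:Y-return-set}) is essentially bookkeeping. By definition $a_i=1$ iff $i\in\Ascr$, so for $n\in\N$ we have $\sigma^n(a)\in\Cyl{1}_Y$ iff $a_n=1$ iff $n\in\Ascr$. Hence $N_{\sigma|_Y}(a,\Cyl{1}_Y)=\Ascr\cap\N$.

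I also need $a\in\Cyl{1}_Y$, i.e.\ $0\in\Ascr$. This holds because $\min G_k=q_k/2\geq 2$ for every $k\in\N$, so $0\notin G$. Finally, $\Ascr\cap\N\notin\Fps$: if it were piecewise syndetic, then so would be its superset $\Ascr$ (families are hereditary upward), contradicting Lemma~\ref{lem:H-not-ps}.

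For part (\ref{item:Y-class}), I would verify three things.

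\emph{Transitivity.} $(Y,\sigma|_Y)$ is transitive with $a\in\operatorname{Tran}(Y,\sigma|_Y)$, since $Y$ is the orbit closure of $a$.

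\emph{Zero entropy.} This is Lemma~\ref{lem:zero-entropy}.

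\emph{$E$-system.} By Lemma~\ref{lem:a-recurrent}, $a$ is $\Fpubd$-recurrent, so Lemma~\ref{lem:ME}~(\ref{item:ME-pubd}) applied to the transitive point $a$ gives that $(Y,\sigma|_Y)$ is an $E$-system. Hence $(Y,\sigma|_Y)\in\Ezero$.

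\emph{Not in $\Mzero$.} One route is to use Lemma~\ref{lem:ME}~(\ref{item:ME-ps}): if $(Y,\sigma|_Y)$ were an $M$-system, then the transitive point $a$ would be $\Fps$-recurrent, so $N_{\sigma|_Y}(a,\Cyl{1}_Y)\in\Fps$, contradicting part (\ref{item:Y-return-set}). Alternatively, Remark~\ref{rem:Y-system} shows that $0^\infty$ is the unique minimal point. Since $\Cyl{1}_Y$ is a nonempty open set avoiding $0^\infty$, the minimal points are not dense.

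\emph{The inclusion.} $\Ezero\setminus\Mzero\subseteq\Ezero\setminus\mathscr{M}_0$ follows from $\mathscr{M}_0\subseteq\Mzero$. In fact $Y$ is not minimal, since it contains the proper subsystem $\{0^\infty\}$ while $a\neq 0^\infty$.

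No step is a real obstacle, since all the substantive work sits in Lemmas~\ref{lem:H-not-ps}, \ref{lem:zero-entropy} and \ref{lem:a-recurrent}. The only delicate points are the bookkeeping between $\Zp$ and $\N$ in return-time sets, and checking $0\in\Ascr$ so that $\Cyl{1}_Y$ is indeed a neighborhood of $a$.
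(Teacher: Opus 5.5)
Your proof is correct and follows the paper's argument essentially verbatim. Part (i) comes from the definition of $a$ together with Lemma~\ref{lem:H-not-ps}, and part (ii) from Lemmas~\ref{lem:zero-entropy}, \ref{lem:a-recurrent} and \ref{lem:ME}, with the failure of the $M$-property obtained exactly as in the paper via Lemma~\ref{lem:ME}~(\ref{item:ME-ps}) and part (i). Your extra checks that $0\in\Ascr$ (since $\min G_k=q_k/2\geq 2$) and that $\Ascr\cap\N\notin\Fps$ follows from $\Ascr\notin\Fps$ by upward heredity simply make explicit what the paper treats as clear, and your alternative route via Remark~\ref{rem:Y-system} is also valid.
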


\begin{proof}
(i) Clearly $\Cyl{1}_{Y}$ is an open neighborhood of $a$.
By the construction of $a$, Lemma~\ref{lem:H-not-ps} gives
\begin{equation}
\label{eq:a-return-set}
 N_{\sigma|_{Y}}(a,\Cyl{1}_{Y})=\{n\in \N:
 ((\sigma|_{Y})^{n}(a))_{0}=1\}=\Ascr\cap\N
 \notin \Fps.
\end{equation}

(ii)
Clearly $a\in\operatorname{Tran}(Y,\sigma|_{Y})$.
Lemma~\ref{lem:a-recurrent},
Lemma~\ref{lem:ME}~\textup{(\ref{item:ME-pubd})},
and Lemma~\ref{lem:zero-entropy} give
\((Y,\sigma|_{Y})\in\Ezero.\)
Meanwhile, by \eqref{eq:a-return-set},
Lemma~\ref{lem:ME}~\textup{(\ref{item:ME-ps})} shows that
\((Y,\sigma|_{Y})\) is not an \(M\)-system. Therefore
\((Y,\sigma|_{Y})\in\Ezero\setminus\Mzero
\subseteq
\Ezero\setminus\mathscr{M}_{0}.\)
\end{proof}

\subsection{A minimal subshift with u.p.e.}
\label{sec:upe}

This subsection constructs a minimal subshift
\((X,\sigma|_{X})\) with u.p.e.\ and a transitive point
\({\hat{x}}\in X\) satisfying \({\hat{x}_{0}}=1\) and
\(\{n\in\N:{\hat{x}_{n}}=1\}\subseteq G\). We also prove that
\((X,\sigma|_{X})\) is not disjoint from the subshift
\((Y,\sigma|_{Y})\) constructed in the preceding subsection.

For a finite collection \(\mathscr{W}\) of finite words
on $\Sigma$ and \(n\in \Zp\), let
\[
 \mathscr{C}_{\mathscr{W}}(n)=
 \left\{{w_{1}\cdots w_{s}}:
 s\in \Zp,\ w_{i}\in\mathscr{W}\ (1\leq i\leq s),\
 \sum_{i=1}^{s}|w_{i}|=n
 \right\},
\]
and
\[
\mathscr{C}_{\mathscr{W}}=\bigcup_{n\in \Zp}
\mathscr{C}_{\mathscr{W}}(n)
=\left\{{w_{1}\cdots w_{s}}:
s\in \Zp,\ w_{i}\in\mathscr{W}\ (1\leq i\leq s)\right\}.
\]
It is easy to see that if $\mathscr{W}_{1}\subseteq \mathscr{W}_{2}$, then for any $n\in \Zp$,
$\mathscr{C}_{\mathscr{W}_{1}}(n)\subseteq \mathscr{C}_{\mathscr{W}_{2}}(n)$
and $\mathscr{C}_{\mathscr{W}_{1}}\subseteq \mathscr{C}_{\mathscr{W}_{2}}$.

\begin{lemma}
\label{lem:alignment}
Let \(\mathscr{W}\) be a finite collection of finite words
containing a word of length \(L\) and a word of length \(L+1\)
for some $L\in \N$.  If words \(u\) and \(v\) of the
same length occur in members of \(\mathscr{W}\), then
there exist \(d\in\N\), \(0\leq\alpha\leq d-|u|\),
and \({c^{(u)},c^{(v)}}\in\mathscr{C}_{\mathscr{W}}(d)\)
such that
\[
 {(c^{(u)})_{\alpha}(c^{(u)})_{\alpha+1}\cdots
 (c^{(u)})_{\alpha+|u|-1}=u}
 \text{ and }
 {(c^{(v)})_{\alpha}(c^{(v)})_{\alpha+1}\cdots
 (c^{(v)})_{\alpha+|u|-1}=v}.
\]
\end{lemma}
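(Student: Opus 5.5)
We will place $u$ and $v$ at the same position $\alpha$ inside two concatenations of words from $\mathscr{W}$ having the same total length $d$. Fix words $w,w'\in\mathscr{W}$ such that $u$ occurs in $w$ at position $i$ and $v$ occurs in $w'$ at position $j$, so $w_i\cdots w_{i+|u|-1}=u$ and $w'_j\cdots w'_{j+|u|-1}=v$. Let $p\in\mathscr{W}$ have length $L$ and $p'\in\mathscr{W}$ have length $L+1$. The only thing to arrange is that the prefixes placed before $w$ and before $w'$ have lengths differing by exactly $j-i$. After that, the suffixes are adjusted so that the two total lengths coincide.

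\textbf{Step 1 (prefix lengths).} Since $\gcd(L,L+1)=1$, every integer $N\geq L(L-1)$ is a nonnegative combination $sL+t(L+1)$; this is the Frobenius/Chicken McNugget bound. Assume without loss of generality that $j\geq i$. Choose a large integer $M\geq L(L-1)$, and build prefixes $P\in\mathscr{C}_{\mathscr{W}}(M+j-i)$ and $P'\in\mathscr{C}_{\mathscr{W}}(M)$ by concatenating copies of $p$ and $p'$. This is possible because both lengths are at least $L(L-1)$. Set $\alpha=M+j$. Then $u$ occurs in $Pw$ at position $|P|+i=M+j=\alpha$, and $v$ occurs in $P'w'$ at position $|P'|+j=\alpha$.

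\textbf{Step 2 (equalizing total lengths).} The current lengths are $\ell_1=M+j-i+|w|$ and $\ell_2=M+|w'|$. Pick $d$ large enough that both $d-\ell_1$ and $d-\ell_2$ are at least $L(L-1)$. Append suffixes $S\in\mathscr{C}_{\mathscr{W}}(d-\ell_1)$ and $S'\in\mathscr{C}_{\mathscr{W}}(d-\ell_2)$, again built from $p$ and $p'$. Put $c^{(u)}=PwS$ and $c^{(v)}=P'w'S'$. Both lie in $\mathscr{C}_{\mathscr{W}}(d)$, and both contain the required occurrences at position $\alpha$. The constraint $0\leq\alpha\leq d-|u|$ holds because $\alpha+|u|\leq |P|+|w|\leq d$.

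The main point needing care is the Frobenius step, which shows that all sufficiently large lengths are realizable using only the two words of lengths $L$ and $L+1$. Explicitly, for $N=qL+r$ with $0\leq r<L$ and $q\geq r$, we can write $N=(q-r)L+r(L+1)$, and $q\geq L-1\geq r$ is guaranteed once $N\geq L(L-1)$. The case $j<i$ is symmetric: swap which prefix receives the extra length $i-j$. Everything else is bookkeeping of indices.
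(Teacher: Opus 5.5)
Your proof is correct and follows essentially the same route as the paper: you use the Frobenius bound for the coprime lengths $L$ and $L+1$ to pad in front of the words containing $u$ and $v$ so that both occurrences land at a common position $\alpha$, and then pad behind so both concatenations have the same total length $d$. The differences are only bookkeeping. The paper fixes $\alpha=L^2-L+\max\{\alpha_u,\alpha_v\}+1$ directly and pads by $\alpha-\alpha_u$ and $\alpha-\alpha_v$, which avoids your case split on $j\geq i$. Your explicit derivation of the Frobenius bound stands in for the paper's citation of Sylvester's theorem.
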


\begin{proof}
Fix \(w_{L}, w_{L+1}\in\mathscr{W}\) with
\(|w_{L}|=L\) and \(|w_{L+1}|=L+1\). By hypothesis,
there exist \(w^{(u)}, w^{(v)}\in\mathscr{W}\) and
\(\alpha_{u}, \alpha_{v}\in\Zp\) such that
\((w^{(u)})_{\alpha_{u}}(w^{(u)})_{\alpha_{u}+1}\cdots
 (w^{(u)})_{\alpha_{u}+|u|-1}=u\) and
 \((w^{(v)})_{\alpha_{v}}(w^{(v)})_{\alpha_{v}+1}$
 $\cdots
 (w^{(v)})_{\alpha_{v}+|v|-1}=v.\)
By \(\gcd(L,L+1)=1\), Sylvester's theorem
\cite[Theorem~2.1.1]{RamirezAlfonsin} gives
\([L^{2}-L, +\infty)\cap\Zp
 \subseteq
 \{rL+s(L+1): r,s\in\Zp\}.\)
In particular, for
\(\alpha= L^{2}-L+\max\{\alpha_{u},\alpha_{v}\}+1\),
there exist
\(r_{u},s_{u},r_{v},s_{v}\in\Zp\) satisfying
\(\alpha-\alpha_{u}=r_{u}L+s_{u}(L+1)\) and
\(\alpha-\alpha_{v}=r_{v}L+s_{v}(L+1).\)
Then, take
\(\tilde{c}^{(u)}=w_{L}^{r_{u}}w_{L+1}^{s_{u}}w^{(u)}\) and
\(\tilde{c}^{(v)}=w_{L}^{r_{v}}w_{L+1}^{s_{v}}w^{(v)}.\)
Clearly,
\begin{equation}
\label{eq:Constructions-Cu-Cv}
 {(\tilde{c}^{(u)})_{\alpha}(\tilde{c}^{(u)})_{\alpha+1}\cdots
 (\tilde{c}^{(u)})_{\alpha+|u|-1}=u}
 \text{ and }
 {(\tilde{c}^{(v)})_{\alpha}(\tilde{c}^{(v)})_{\alpha+1}\cdots
 (\tilde{c}^{(v)})_{\alpha+|u|-1}=v}.
\end{equation}

Next, choose \(d= L^{2}-L+\max\{|\tilde{c}^{(u)}|, |\tilde{c}^{(v)}|\}
+\alpha\geq \alpha+|u|.\)
Then there exist
\(r'_{u},s'_{u},r'_{v},s'_{v}\in\Zp\) satisfying
\(d-|\tilde{c}^{(u)}|=r'_{u}L+s'_{u}(L+1)\)
and \(d-|\tilde{c}^{(v)}|=r'_{v}L+s'_{v}(L+1)\).
This, together with the constructions
of $\tilde{c}^{(u)}$ and $\tilde{c}^{(v)}$
and \eqref{eq:Constructions-Cu-Cv}, implies
\(c^{(u)}=\tilde{c}^{(u)} w_{L}^{r'_{u}}w_{L+1}^{s'_{u}}\in
\mathscr{C}_{\mathscr{W}}(d)\),
\(c^{(v)}=\tilde{c}^{(v)} w_{L}^{r'_{v}}w_{L+1}^{s'_{v}}\in
\mathscr{C}_{\mathscr{W}}(d)\), and
\({(c^{(u)})_{\alpha}(c^{(u)})_{\alpha+1}\cdots
 (c^{(u)})_{\alpha+|u|-1}=u}\),
\({(c^{(v)})_{\alpha}(c^{(v)})_{\alpha+1}\cdots
 (c^{(v)})_{\alpha+|u|-1}=v}\).
\end{proof}

\textcolor{red}{The following proposition uses this alignment lemma to
construct the finite words underlying the required subshift.}

\begin{proposition}
\label{prop:block-construction}
There exist strictly increasing \(L_{m}\in\N\), \(\mathcal{V}_{m}
\subseteq \{0,1\}^{L_{m}}\), \(P_{m},A_{m}\in\{0,1\}^{L_{m}}\),
\(B_{m}\in\{0,1\}^{L_{m}+1}\), and finite collections \(\mathscr{P}_{m}\)
of pairs \(\pi=\{u_{\pi},v_{\pi}\}\) of distinct words of the same
length and sets \(I_{\pi,m}\) with
\begin{equation}
\label{eq:P-m-and-Ipim}
 \mathscr{P}_{m}\subseteq \mathscr{P}_{m+1}\ (m \in \N),
 \quad
 \varnothing\neq I_{\pi,m}\subseteq
 \{0,\ldots,L_{m}-|u_{\pi}|\}
 \ ({m \geq 2}, \pi\in\mathscr{P}_{m}),
\end{equation}
satisfying, for each \(m\in\N\), the following conditions:
\begin{enumerate}
\renewcommand{\theenumi}{C\arabic{enumi}}
\renewcommand{\labelenumi}{\textup{(\theenumi)}}
\item
\label{cond:concatenation}
$\mathscr{W}_{m+1}\subseteq \mathscr{C}_{\mathscr{W}_{m}}$
and $\mathscr{W}_{m}\subseteq \bigcap_{w\in \mathscr{W}_{m+1}}
\Sub(w)$, where
\(\mathscr{W}_{m}=\mathcal{V}_{m}\cup\{P_{m},A_{m},B_{m}\}\) and
\(\mathscr{W}_{m+1}=\mathcal{V}_{m+1}
\cup\{P_{m+1}, A_{m+1}, B_{m+1}\}\).
\item
\label{cond:prefix}
\((A_{m+1})_{0}(A_{m+1})_{1}\cdots(A_{m+1})_{L_{m}-1}=A_{m}\).
\item
\label{cond:support}
\(\{i\in\Zp:(P_{m})_{i\bmod L_{m}}=1\}\subseteq G\),
where \(G\) is the set defined in \eqref{eq:DA}.
\item
\label{cond:coordinates}
\((P_{m})_{0}=0\), \((A_{m})_{0}=1\), and
 \((A_{m})_{i}=(P_{m})_{i}\) \((1\leq i<L_{m})\).
\item
\label{cond:pairs}
 \(\{\{u,v\}:u,v\in\Sub(A_{m}),\ u\neq v,\
 1\leq |u|=|v|\leq m\}\subseteq \mathscr{P}_{m+1}\).
\item
\label{cond:independence}
{If \(m\geq2\), then for each
\(\pi=\{u_{\pi}, v_{\pi}\}\in\mathscr{P}_{m}\) and each map}
\(\eta\colon I_{\pi,m}\to\{u_{\pi},v_{\pi}\}\), there exists
\(w\in\mathcal{V}_{m}\) such that
\[
w_{i}w_{i+1}\cdots w_{i+|u_{\pi}|-1}=\eta(i)
 \quad (i\in I_{\pi,m}).
\]
\item
\label{cond:propagation}
{If \(m\geq2\), then for each
\(\pi\in\mathscr{P}_{m}\),}
\[
 \frac{\#(I_{\pi,m+1})}{L_{m+1}}
 \geq
 (1-2^{-m-4})\frac{\#(I_{\pi,m})}{L_{m}}.
 \]
\end{enumerate}
\end{proposition}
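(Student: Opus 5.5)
The plan is to build everything by induction on $m$. Throughout, each $L_m$ will be taken equal to some $q_{k_m}$, with $k_1<k_2<\cdots$ increasing rapidly. This choice is what makes the support condition (C3) manageable. Indeed, by \eqref{eq:Dk-structure} each $G_k$ is $q_k$-periodic, and by (Q-1) we have $q_{k_m}\mid q_{k_{m+1}}/2$. Hence a word $P_m$ of length $L_m=q_{k_m}$ whose $1$'s lie in $\bigcup_{k\le k_m}G_k\cap[0,L_m)$ has its $L_m$-periodic extension supported in $G$. The same then holds for any concatenation $P_m^{N}$ with $NL_m=L_{m+1}$. The words $A_m$, $B_m$ and the elements of $\mathcal V_m$ are not required to satisfy (C3); only $P_m$ is. So the $1$'s that are forced into $P_{m+1}$ by the requirement $\mathscr W_m\subseteq\Sub(P_{m+1})$ will be placed inside one window $G_{k_{m+1},0}=[q_{k_{m+1}}/2,\,q_{k_{m+1}}/2+q_{k_{m+1}-1})$. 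This window has length $q_{k_{m+1}-1}$, which we make as large as we like. Since it is $q_{k_{m+1}}$-periodic, (C3) holds for $P_{m+1}$.

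For the base step, take $L_1=q_{k_1}$, let $P_1$ be $0$ at coordinate $0$ and the indicator of $G_{k_1}$ elsewhere on $[0,L_1)$, and let $A_1$ be $P_1$ with first symbol changed to $1$. Put $B_1=P_10$, and let $\mathcal V_1$ be any finite set of words of length $L_1$. Conditions (C4) and (C3) are immediate.

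For the inductive step, fix $k=k_{m+1}$ so large that $q_{k-1}$ exceeds every length needed below by a factor $2^{m+10}$. Then carry out the following steps in order.

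\begin{enumerate}
\item Build a ``showcase'' word $S\in\mathscr C_{\mathscr W_m}$ that contains every word of $\mathscr W_m$.
\item Build $P_{m+1}$ as a concatenation of copies of $P_m$, with one stretch inside the window $G_{k,0}$ replaced by $S$ and the remaining positions padded with copies of $P_m$, in total length $L_{m+1}=q_k$. The lengths can be matched exactly, since $L_m$ and $L_m+1$ generate all large integers by Sylvester's theorem. Because $P_{m+1}$ begins with $P_m$, the word $A_{m+1}$ obtained by flipping the first symbol to $1$ begins with $A_m$, which gives (C2) and (C4).
\item Take $B_{m+1}$ to be $P_{m+1}$ with one copy of $P_m$ lying outside the window replaced by $B_m$. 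This makes its length $L_{m+1}+1$, keeps it in $\mathscr C_{\mathscr W_m}$, and $B_{m+1}$ is not subject to (C3).
\item Set $\mathscr P_{m+1}=\mathscr P_m$ together with all pairs from (C5).
\end{enumerate}

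The set $\mathcal V_{m+1}$ is built as follows. Its words consist, outside a short initial segment containing $S$, of arbitrary concatenations of words of $\mathcal V_m$ placed at fixed positions forming a grid. Inside a further short segment, for each new pair $\pi=\{u,v\}\in\mathscr P_{m+1}\setminus\mathscr P_m$, we insert aligned blocks $c^{(u)}$ or $c^{(v)}$ supplied by Lemma~\ref{lem:alignment}, applied with $\mathscr W=\mathscr W_m$, which contains lengths $L_m$ and $L_m+1$. Padding with $P_m$ and $B_m$ again fixes the total length to exactly $L_{m+1}$. The index sets are defined as follows.

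\begin{itemize}
\item For $\pi\in\mathscr P_m$, let $I_{\pi,m+1}$ be the union, over the grid positions $p$ of $\mathcal V_m$-copies, of the sets $p+I_{\pi,m}$.
\item For a new pair $\pi$, let $I_{\pi,m+1}$ be its alignment offsets.
\end{itemize}

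Then (C6) follows from (C6) at level $m$, since the choices in distinct copies are free, and from the two-choice alignment. Condition (C1) holds because everything is a concatenation from $\mathscr W_m$, and every member of $\mathscr W_{m+1}$ contains the showcase $S$.

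The step I expect to be the main obstacle is (C7), together with keeping all lengths equal to $L_{m+1}$. The fraction of $[0,L_{m+1})$ not covered by grid copies of $\mathcal V_m$ consists of the showcase, the alignment segment and the padding. This fraction has to be at most $2^{-m-4}$, and it is here that the choice of $k$ with $q_{k-1}$ enormous relative to $|S|$ and to the alignment lengths enters. The padding also has to be arranged so that grid copies sit at the same positions in every word of $\mathcal V_{m+1}$; otherwise the density ratio $\#(I_{\pi,m+1})/L_{m+1}$ could drop. I would first try putting all the non-grid material into one block of fixed length at the start of each word, and then checking the density bound.
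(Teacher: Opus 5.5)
Your proposal is correct and is essentially the paper's own construction. The paper likewise takes $L_{m+1}=q_{k_{m+1}}$. It pads the showcase word (the paper's $K_m$) with $P_m$ and $B_m$ via Sylvester's theorem to fill exactly the window $G_{k_{m+1},0}$ of $P_{m+1}$; this needs $k_{m+1}\geq k_m+2$, so that $L_m\mid q_{k_{m+1}-1}$ and the copies of $P_m$ after the window stay aligned. It obtains $A_{m+1}$ by flipping the first symbol. It builds $\mathcal V_{m+1}$ from the showcase, Lemma~\ref{lem:alignment} blocks for the new pairs, a fixed grid of $\mathcal V_m$-copies and a fixed-length padding, so that (C7) reduces to the non-grid part having length at most $2^{-m-4}L_{m+1}$.

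The remaining differences are cosmetic. The paper takes $\mathscr P_1=\varnothing$ and $B_{m+1}=K_mF_m^{(B)}$, and puts the padding $F_m$ at the end. It also inserts $t_\pi\geq1$ alignment blocks per new pair, whereas a single block already suffices for the statement as written.
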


\begin{proof}
We construct the stated objects inductively.

For \(m=1\), put
\[
 k_{1}=1,\quad L_{1}=q_{1}=4,\quad
 P_{1}=0010,\quad A_{1}=1010,\quad B_{1}=00000,\quad
 \mathcal{V}_{1}=\{0,1\}^{4},\quad \mathscr{P}_{1}=\varnothing.
\]
Then
\[
  \{i\in\Zp:(P_{1})_{i\bmod L_{1}}=1\}=G_{1},\quad
  (P_{1})_{0}=0,\quad (A_{1})_{0}=1,\quad
  (A_{1})_{i}=(P_{1})_{i}\ (1\leq i<L_{1}),
\]
and so conditions (\ref{cond:support}) and
(\ref{cond:coordinates}) hold for \(m=1\).

\medskip

Assume that, for some \(m\in\N\), all the objects with indices at
most \(m\) have been constructed. First, enumerate
\(\mathscr{W}_{m}=\{w_{1},\ldots,w_{\#(\mathscr{W}_{m})}\}\), and set
\[
 K_{m}=w_{1}w_{2}\cdots w_{\#(\mathscr{W}_{m})},\
 \mathscr{Q}_{m}= \{\{u,v\}:u,v\in\Sub(A_{m}),\ u\neq v,\
 1\leq |u|=|v|\leq m\}.
\]
It can be verified that
\begin{itemize}
  \item \(\mathscr{W}_{m}\subseteq \Sub(K_{m}).\)
  \item For any \(\pi\in\mathscr{Q}_{m}\), by Lemma~\ref{lem:alignment},
  there exist \(d_{\pi}\in\N\), \(0\leq\alpha_{\pi}\leq d_{\pi}-|u_{\pi}|\),
  and \(c_{\pi}^{0}\), \(c_{\pi}^{1}\in \mathscr{C}_{\mathscr{W}_{m}}(d_{\pi})\)
such that
\[
\begin{cases}
 (c_{\pi}^{0})_{\alpha_{\pi}}(c_{\pi}^{0})_{\alpha_{\pi}+1}\cdots
 (c_{\pi}^{0})_{\alpha_{\pi}+|u_{\pi}|-1}=u_{\pi},\\
 (c_{\pi}^{1})_{\alpha_{\pi}}(c_{\pi}^{1})_{\alpha_{\pi}+1}\cdots
 (c_{\pi}^{1})_{\alpha_{\pi}+|u_{\pi}|-1}=v_{\pi}.
\end{cases}
\]
\end{itemize}

Since \(q_{k}\), \(q_{k-1} \to +\infty\) as \(k\to +\infty\)
(by \eqref{eq:q-definition}) and \(\mathscr{Q}_{m}\) is finite,
there exists \(k_{m+1}\geq k_{m}+2\) such that
\({q_{k_{m+1}-1}-|K_{m}|\geq L_{m}^{2}-L_{m}}\),
\({|K_{m}|\leq 2^{-m-7} q_{k_{m+1}}}\),
\(L_{m}^{2}\leq 2^{-m-7} q_{k_{m+1}}\), and
\(\frac{2^{-m-8}q_{k_{m+1}}}{\#(\mathscr{Q}_{m})d_{\pi}}
\geq 1\) (\(\forall \pi \in \mathscr{Q}_{m}\)).
Choose \(L_{m+1}=q_{k_{m+1}}\). Clearly $L_{m+1}>L_{m}$.
Moreover, Sylvester's theorem
\cite[Theorem~2.1.1]{RamirezAlfonsin} gives
\([L_{m}^{2}-L_{m},+\infty)\cap\Zp
 \subseteq \{rL_{m}+s(L_{m}+1):r,s\in\Zp\}.\)
Noting that \({q_{k_{m+1}-1}-|K_{m}|\geq L_{m}^{2}-L_{m}}\)
and \(L_{m+1}+1-|K_{m}|
 \geq(1-2^{-m-7})L_{m+1}
 \geq 2^{-m-7}L_{m+1}
 \geq L_{m}^{2}>L_{m}^{2}-L_{m}\), together with
 $|P_{m}|=L_{m}$ and $|B_{m}|=L_{m}+1$, we
 have that there exist
\({F_{m}^{(P)}}\in\mathscr{C}_{\{P_{m},B_{m}\}}
 (q_{k_{m+1}-1}-|K_{m}|)\) and
\({F_{m}^{(B)}}\in\mathscr{C}_{\{P_{m},B_{m}\}}
 (L_{m+1}+1-|K_{m}|).\)
According to \(L_{m} | \frac{L_{m+1}}{2}\) and
\(L_{m} | q_{k_{m+1}-1}\), we
define
\begin{align*}
 P_{m+1}
 = & P_{m}^{{\frac{L_{m+1}}{2L_{m}}}}K_{m}{F_{m}^{(P)}}
  P_{m}^{{\frac{L_{m+1}/2-q_{k_{m+1}-1}}{L_{m}}}},\\
A_{m+1}
 = & A_{m}P_{m}^{{\frac{L_{m+1}}{2L_{m}}}-1}K_{m} {F_{m}^{(P)}}
  P_{m}^{{\frac{L_{m+1}/2-q_{k_{m+1}-1}}{L_{m}}}},\\
B_{m+1}= & K_{m} {F_{m}^{(B)}}.
\end{align*}
It is easy to see that
\begin{equation}
\label{eq:P-A-B-Def}
P_{m+1}, A_{m+1}\in \{0, 1\}^{L_{m+1}}
\cap \mathscr{C}_{\mathscr{W}_{m}}
\text{ and }
B_{m+1}\in \{0, 1\}^{L_{m+1}+1}
\cap \mathscr{C}_{\mathscr{W}_{m}}.
\end{equation}

Next, for each \(\pi\in\mathscr{Q}_{m}\), set
\begin{equation}
\label{eq:t-pi-sm}
 t_{\pi}=
 \left\lfloor
 \frac{2^{-m-7}L_{m+1}}{\#(\mathscr{Q}_{m})d_{\pi}}
 \right\rfloor
 \text{ and }
 S_{m}=|K_{m}|+\sum_{\pi\in\mathscr{Q}_{m}}t_{\pi} d_{\pi}.
\end{equation}
Then
\begin{equation}
\label{eq:t-estimates}
\begin{cases}
 \displaystyle{t_{\pi}\geq
 \frac{2^{-m-7}L_{m+1}}{\#(\mathscr{Q}_{m})d_{\pi}}-1\geq
 \frac{2^{-m-8}L_{m+1}}{\#(\mathscr{Q}_{m})d_{\pi}}
 \geq 1,} \\
 \displaystyle{\sum_{\pi\in\mathscr{Q}_{m}}t_{\pi} d_{\pi}
 \leq \sum_{\pi\in\mathscr{Q}_{m}}
 \frac{2^{-m-7}L_{m+1}}{\#(\mathscr{Q}_{m})d_{\pi}}d_{\pi}
 = {2^{-m-7}L_{m+1}}.}
 \end{cases}
\end{equation}
Choose the unique integer \(h_{m}\in \N\) satisfying
\(L_{m}^{2}-L_{m}\leq h_{m}<L_{m}^{2}\) and
\(h_{m}\equiv L_{m+1}-S_{m}\pmod{L_{m}},\)
and put
\begin{equation}\label{eq:xi-m-choice}
\xi_{m}=\frac{{L_{m+1}}-S_{m}-h_{m}}{L_{m}}\in \mathbb{Z}.
\end{equation}
Then, by \eqref{eq:t-estimates},
$|K_{m}|\leq 2^{-m-7}L_{m+1}$, and $h_{m}
<L_{m}^{2}\leq 2^{-m-7}L_{m+1}$, we get
\begin{equation}
\label{eq:xi-m-estimate}
 \xi_{m} L_{m}={L_{m+1}}-S_{m}-h_{m}
 \geq {(1-3\cdot2^{-m-7}) L_{m+1}}
 \geq {(1-2^{-m-4}) L_{m+1}}>0,
\end{equation}
implying \(\xi_{m}\in\N\) by \eqref{eq:xi-m-choice}.
Moreover, by \(h_{m}\in\{rL_{m}+s(L_{m}+1):r,s\in\Zp\}\),
\(|P_{m}|=L_{m}\), and \(|B_{m}|=L_{m}+1\), there exists
\(F_{m}\in\mathscr{C}_{\{P_{m}, B_{m}\}}(h_{m})
\subseteq \mathscr{C}_{\mathscr{W}_{m}}(h_{m})\).
\medskip

Then, enumerate
\(\mathscr{Q}_{m}=\{\pi_{1},\ldots,\pi_{\#(\mathscr{Q}_{m})}\}\), and set
\[
 \mathcal{V}_{m+1}
 =\left\{K_{m}
 \Bigg(
 \prod_{a=1}^{\#(\mathscr{Q}_{m})}
 \prod_{\ell=1}^{t_{\pi_{a}}}
 c_{\pi_{a}}^{\zeta_{\pi_{a},\ell}}
 \Bigg)
 v^{(1)}\cdots v^{(\xi_{m})}F_{m}:
 \begin{array}{l}
 \zeta_{\pi_{a},\ell}\in\{0,1\}
 \ (1\leq a\leq\#(\mathscr{Q}_{m}), 1\leq\ell\leq t_{\pi_{a}}),\\
 v^{(j)}\in\mathcal{V}_{m}\ (1\leq j\leq \xi_{m})
 \end{array}
 \right\}.
\]
Then $\mathcal{V}_{m+1}\subseteq \{0, 1\}^{L_{m+1}}\cap \mathscr{C}_{\mathscr{W}_{m}}$ by
\(|K_{m}|+{\sum_{a=1}^{\#(\mathscr{Q}_{m})}t_{\pi_{a}}d_{\pi_{a}}}
 +\xi_{m} L_{m}+ |F_{m}|={L_{m+1}}\) according to \eqref{eq:t-pi-sm}
 and \eqref{eq:xi-m-choice}.
Meanwhile, let
\begin{equation}
\label{eq:s-pi-a-ell}
 s_{\pi_{a},\ell}
 =|K_{m}|+{\sum_{b=1}^{a-1}t_{\pi_{b}}d_{\pi_{b}}}
 +(\ell-1)d_{\pi_{a}}
 \quad
 (1\leq a\leq\#(\mathscr{Q}_{m}),\ 1\leq\ell\leq t_{\pi_{a}}),
\end{equation}
and
\begin{equation}
\label{eq:rj-and-Pm+1}
 r_{j}=S_{m}+(j-1)L_{m}
 \quad(1\leq j\leq\xi_{m}),\quad
 \mathscr{P}_{m+1}=\mathscr{P}_{m}\cup
 (\mathscr{Q}_{m}\setminus \mathscr{P}_{m}).
\end{equation}

Finally, for any \(\pi\in\mathscr{P}_{m+1}\), define
\[
I_{\pi,m+1}=
\begin{cases}
\bigcup_{j=1}^{\xi_{m}}(r_{j}+I_{\pi,m}), & \pi\in\mathscr{P}_{m}, \\
\{s_{\pi,\ell}+\alpha_{\pi}: 1\leq\ell\leq t_{\pi}\}, &
\pi\in\mathscr{Q}_{m}\setminus\mathscr{P}_{m}.
\end{cases}
\]
Clearly $I_{\pi, 2}\neq \varnothing$ for any $\pi\in \mathscr{P}_2
=\mathscr{Q}_1\setminus \mathscr{P}_1$ by $\mathscr{P}_1=\varnothing$
and \(t_{\pi}\geq 1\) (see~\eqref{eq:t-estimates}). By the inductive
definitions of \(I_{\pi,m+1}\) and \(\mathscr{P}_{m+1}\), and
\(t_{\pi}\geq 1\) for \(\pi\in\mathscr{Q}_{m}\), it is easy to see that
\(I_{\pi,m+1}\neq\varnothing\) for any
\(\pi\in\mathscr{P}_{m+1}\).
Now, we first verify $I_{\pi, m+1}\subseteq
\{0,\ldots,L_{m+1}-|u_{\pi}|\}$.

If \(\pi\in\mathscr{P}_{m}\), according to
\(r_{j+1}-r_{j}=L_{m}\) by \eqref{eq:rj-and-Pm+1}
and \(I_{\pi,m}\subseteq \{0,\ldots,L_{m}-|u_{\pi}|\},\)
the sets \(r_{j}+I_{\pi,m}\), \(1\leq j\leq\xi_{m}\), are
pairwise disjoint. Thus
\begin{equation}
\label{eq:I-propagation-cardinality}
 \#(I_{\pi,m+1})
 =\sum_{j=1}^{\xi_{m}}\#(r_{j}+I_{\pi,m})
 =\xi_{m}\#(I_{\pi,m})
 \quad (\pi \in \mathscr{P}_{m}).
\end{equation}
Moreover, $\max I_{\pi, m+1}=r_{\xi_{m}}+\max I_{\pi, m}
 \leq r_{\xi_{m}}+L_{m}-|u_{\pi}|
 =S_m+\xi_{m}L_{m}-|u_{\pi}|
 =L_{m+1}-h_{m}-|u_{\pi}|
 \leq L_{m+1}-|u_{\pi}|$, implying
 $I_{\pi, m+1}\subseteq
 \{0,\ldots,L_{m+1}-|u_{\pi}|\}$.

If \(\pi\in\mathscr{Q}_{m}\setminus\mathscr{P}_{m}\),
by \eqref{eq:t-pi-sm}, \eqref{eq:xi-m-estimate}, and
\eqref{eq:s-pi-a-ell},
then
\[
 \max I_{\pi,m+1}
 =s_{\pi,t_{\pi}}+\alpha_{\pi}
 \leq s_{\pi,t_{\pi}}+d_{\pi}-|u_{\pi}|
 \leq S_{m}-|u_{\pi}|
 \leq L_{m+1}-|u_{\pi}|,
\]
implying
\(I_{\pi,m+1}\subseteq \{0,\ldots,L_{m+1}-|u_{\pi}|\}\).

\smallskip

Next, we verify conditions
(\ref{cond:concatenation})--(\ref{cond:propagation}):

\smallskip

(C1) By \eqref{eq:P-A-B-Def} and the construction of
$\mathcal{V}_{m+1}$,
\[
 \mathscr{W}_{m+1}=\mathcal{V}_{m+1}\cup\{P_{m+1}, A_{m+1}, B_{m+1}\}
 \subseteq\mathscr{C}_{\mathscr{W}_{m}},
 \quad
 \mathscr{W}_{m}\subseteq \Sub(K_{m})
 \subseteq\bigcap_{w\in\mathscr{W}_{m+1}}\Sub(w).
\]

(C2) By $A_{m+1}
 = A_{m}P_{m}^{{\frac{L_{m+1}}{2L_{m}}}-1}K_{m} {F_{m}^{(P)}}
  P_{m}^{{\frac{L_{m+1}/2-q_{k_{m+1}-1}}{L_{m}}}}$,
\[
 (A_{m+1})_{0}(A_{m+1})_{1}\cdots(A_{m+1})_{L_{m}-1}=A_{m}.
\]

(C3) By
\(P_{m+1}
 =P_{m}^{\frac{L_{m+1}}{2L_{m}}}K_{m}F_{m}^{(P)}
  P_{m}^{\frac{L_{m+1}/2-q_{k_{m+1}-1}}{L_{m}}},\)
\(\big|P_{m}^{\frac{L_{m+1}}{2L_{m}}}\big|
 =\frac{L_{m+1}}2,\)
\(|K_{m}F_{m}^{(P)}|=q_{k_{m+1}-1},\) and
\(G_{k_{m+1},0}
 =\left[\frac{L_{m+1}}2,
  \frac{L_{m+1}}2+q_{k_{m+1}-1}-1\right]\),
we get
\[
 (P_{m+1})_{i}=(P_{m})_{i\bmod L_{m}}
 \quad
 (i\in\{0,\ldots,L_{m+1}-1\}
 \setminus G_{k_{m+1},0}).
\]
This, together with $L_{m}\mid L_{m+1}$ and
\(G_{k_{m+1}}=\bigcup_{r\in\Zp}
 (rL_{m+1}+G_{k_{m+1},0}),\) implies
\[
 (P_{m+1})_{i\bmod L_{m+1}}
 =(P_{m})_{i\bmod L_{m}}
 \quad(i\in\Zp\setminus G_{k_{m+1}}).
\]
Therefore, by the induction hypothesis,
\[
 \{i\in\Zp:(P_{m+1})_{i\bmod {L_{m+1}}}=1\}
 \subseteq
 \{i\in\Zp:(P_{m})_{i\bmod L_{m}}=1\}\cup G_{k_{m+1}}
 \subseteq G\cup G_{k_{m+1}}
 = G.
\]

(C4) The definitions of \(P_{m+1}\) and \(A_{m+1}\), together with the
induction hypothesis (C4), give
\[
 (P_{m+1})_{0}=0,\quad (A_{m+1})_{0}=1,\quad
 (A_{m+1})_{i}=(P_{m+1})_{i}\quad
 (1\leq i< L_{m+1}).
\]

(C5) By the definitions of \(\mathscr{Q}_{m}\) and \(\mathscr{P}_{m+1}\),
\[
 \{\{u,v\}:u,v\in\Sub(A_{m}),\ u\neq v,\
 1\leq|u|=|v|\leq m\}
 =\mathscr{Q}_{m}\subseteq \mathscr{P}_{m} \cup
 (\mathscr{Q}_{m}\setminus \mathscr{P}_m)=
 \mathscr{P}_{m+1}.
\]

(C6) First, it is not difficult to check that condition~(\ref{cond:independence})
holds for $m=2$. Next, let $m\geq 2$. For each $\pi=\{u_{\pi}, v_{\pi}\}
\in \mathscr{P}_{m+1}$ and each map $\eta: I_{\pi, m+1}\to
\{u_{\pi}, v_{\pi}\}$,
by the definition of $I_{\pi, m+1}$, we consider
the following two cases:

C6-1) If $\pi \in \mathscr{P}_m$, by $I_{\pi, m+1}
=\bigcup_{j=1}^{\xi_{m}}(r_{j}+I_{\pi,m})$,
for any $1\leq j\leq \xi_{m}$, we define the map
$\eta_{j}: I_{\pi, m}\to \{u_{\pi}, v_{\pi}\}$ by
$\eta_{j}(i)=\eta(r_j+i)$ ($i\in I_{\pi, m}$).
Clearly each $\eta_{j}$ is well defined.
For each $\eta_{j}$, by condition~(\ref{cond:independence})
in the induction hypothesis, there exists
\(\tilde{v}^{(j)}\in\mathcal{V}_{m}\) ($1\leq j\leq \xi_m$) such that
\[
 (\tilde{v}^{(j)})_{i}(\tilde{v}^{(j)})_{i+1}\cdots (\tilde{v}^{(j)})_{i+|u_{\pi}|-1}=\eta_{j}(i)
 \quad (i\in I_{\pi,m}).
\]
Put
\[
 \tilde{w}=K_m\Bigg(
 \prod_{a=1}^{\#(\mathscr{Q}_{m})}
 \prod_{\ell=1}^{t_{\pi_a}}c_{\pi_a}^{0}\Bigg)
 \tilde{v}^{(1)}\cdots\tilde{v}^{(\xi_m)}F_m
 \in\mathcal{V}_{m+1}.
\]
Then, for any $i\in I_{\pi,m}$ and any $1\leq j\leq \xi_{m}$,
\[
 \tilde{w}_{r_{j}+i}\tilde{w}_{r_{j}+i+1}\cdots
 \tilde{w}_{r_{j}+i+|u_{\pi}|-1}
 =(\tilde{v}^{(j)})_{i}(\tilde{v}^{(j)})_{i+1}\cdots
 (\tilde{v}^{(j)})_{i+|u_{\pi}|-1}=\eta_{j}(i)
 =\eta(r_{j}+i).
\]
Consequently, by $I_{\pi, m+1}
=\bigcup_{j=1}^{\xi_{m}}(r_{j}+I_{\pi,m})$,
\[
 \tilde{w}_i\tilde{w}_{i+1}\cdots
 \tilde{w}_{i+|u_{\pi}|-1}=\eta(i)
 \quad (i\in I_{\pi,m+1}).
\]

C6-2) If $\pi\in \mathscr{Q}_m\setminus \mathscr{P}_m$,
then \(\pi=\pi_{a}\) for some
\(a\in\{1,\ldots,\#(\mathscr{Q}_{m})\}\).
Let
\[
\zeta_{\pi_{a},\ell}
 =
 \begin{cases}
 0, & \eta(s_{\pi_{a},\ell}+\alpha_{\pi_{a}})=u_{\pi_{a}},\\
 1, & \eta(s_{\pi_{a},\ell}+\alpha_{\pi_{a}})=v_{\pi_{a}},
 \end{cases}
 \quad(1\leq\ell\leq t_{\pi_{a}}),
\]
and
\[
\zeta_{\pi_b,\ell}=0 \quad
(b\in\{1,\ldots,\#(\mathscr{Q}_{m})\}\setminus\{a\},\
1\leq\ell\leq t_{\pi_b}),
\]
and choose
\(\tilde{v}^{(j)}\in\mathcal{V}_{m}\) for
\(1\leq j\leq\xi_m\). Put
\[
 \tilde{w}=K_m
 \Bigg(
 \prod_{b=1}^{\#(\mathscr{Q}_{m})}
 \prod_{\ell=1}^{t_{\pi_b}}
 c_{\pi_b}^{\zeta_{\pi_b,\ell}}
 \Bigg)
 \tilde{v}^{(1)}\cdots\tilde{v}^{(\xi_m)}F_m
 \in\mathcal{V}_{m+1}.
\]
Then, by the defining properties of
\(c_{\pi_a}^{0}\) and \(c_{\pi_a}^{1}\),
\begin{align*}
 &\tilde{w}_{s_{\pi_{a},\ell}+\alpha_{\pi_{a}}}
 \tilde{w}_{s_{\pi_{a},\ell}+\alpha_{\pi_{a}}+1}\cdots
 \tilde{w}_{s_{\pi_{a},\ell}+\alpha_{\pi_{a}}+|u_{\pi_{a}}|-1}\\
 &\quad=
 (c_{\pi_a}^{\zeta_{\pi_a,\ell}})_{\alpha_{\pi_a}}
 (c_{\pi_a}^{\zeta_{\pi_a,\ell}})_{\alpha_{\pi_a}+1}\cdots
 (c_{\pi_a}^{\zeta_{\pi_a,\ell}})_{\alpha_{\pi_a}+|u_{\pi_a}|-1}\\
 &\quad=\eta(s_{\pi_{a},\ell}+\alpha_{\pi_{a}})
 \quad (1 \leq \ell \leq t_{\pi_{a}}).
\end{align*}
Consequently, by
\(I_{\pi_a,m+1}=\{s_{\pi_a,\ell}+\alpha_{\pi_a}:
1\leq\ell\leq t_{\pi_a}\}\),
\[
 \tilde{w}_{i}\tilde{w}_{i+1}\cdots
 \tilde{w}_{i+|u_{\pi_a}|-1}=\eta(i)
 \quad(i\in I_{\pi_a,m+1}).
\]

(C7) For \(m\geq2\) and \(\pi\in\mathscr{P}_{m}\),
\eqref{eq:I-propagation-cardinality} and
\eqref{eq:xi-m-estimate} give
\[
 \frac{\#(I_{\pi,m+1})}{L_{m+1}}
 =\frac{{\xi_{m}}L_{m}}{{L_{m+1}}}
 \frac{\#(I_{\pi,m})}{L_{m}}
 \geq(1-2^{-m-4})\frac{\#(I_{\pi,m})}{L_{m}}.
\]

This completes the induction.
\end{proof}

\begin{corollary}
\label{cor:propagation-density}
Let $\{L_{m}\}$, $\{I_{\pi, m}\}$, and $\{\mathscr{P}_m\}$
be given by Proposition~\ref{prop:block-construction}.
Then, for any \(m\geq2\), any \(\pi\in\mathscr{P}_{m}\),
and any \(n\geq m\),
\[
 \frac{\#(I_{\pi,n})}{L_{n}}
 \geq
 \frac{\#(I_{\pi,m})}{L_{m}}(1-2^{-m-3})>0.
\]
\end{corollary}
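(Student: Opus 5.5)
Since \(\mathscr{P}_{m}\subseteq\mathscr{P}_{n}\) for all \(n\geq m\) by \eqref{eq:P-m-and-Ipim}, condition (\ref{cond:propagation}) of Proposition~\ref{prop:block-construction} can be applied at every level \(k\) with \(m\leq k<n\). Iterating it telescopically gives
\[
 \frac{\#(I_{\pi,n})}{L_{n}}
 \geq \frac{\#(I_{\pi,m})}{L_{m}}\prod_{k=m}^{n-1}(1-2^{-k-4}),
\]
with the empty product equal to \(1\) when \(n=m\). The plan is to check that condition (\ref{cond:propagation}) really applies at each step and then to bound the infinite product from below by \(1-2^{-m-3}\).

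For the first point, fix \(m\leq k<n\). Since \(\pi\in\mathscr{P}_{m}\subseteq\mathscr{P}_{k}\) and \(k\geq m\geq 2\), condition (\ref{cond:propagation}) at level \(k\) gives
\[
 \frac{\#(I_{\pi,k+1})}{L_{k+1}}\geq(1-2^{-k-4})\frac{\#(I_{\pi,k})}{L_{k}}.
\]
Chaining these inequalities from \(k=m\) to \(k=n-1\) yields the displayed product bound. For the product itself, use the elementary inequality \(\prod_{k}(1-x_k)\geq 1-\sum_k x_k\) for \(x_k\in[0,1]\), which follows by induction on the number of factors. This gives
\[
 \prod_{k=m}^{n-1}(1-2^{-k-4})\geq 1-\sum_{k\geq m}2^{-k-4}=1-2^{-m-3}.
\]

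Positivity comes from \eqref{eq:P-m-and-Ipim}: for \(m\geq 2\) we have \(I_{\pi,m}\neq\varnothing\), so \(\#(I_{\pi,m})/L_{m}>0\), and \(1-2^{-m-3}>0\). The argument is routine. The only points needing care are that membership in \(\mathscr{P}_{k}\) persists for all \(k\geq m\), so that (\ref{cond:propagation}) is applicable at each level, and that \(n=m\) is covered trivially.
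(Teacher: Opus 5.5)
Your proposal is correct and follows essentially the same route as the paper: you chain condition (\ref{cond:propagation}) from level $m$ to level $n-1$, then bound $\prod_{k=m}^{n-1}(1-2^{-k-4})$ below by $1-\sum_{k\geq m}2^{-k-4}=1-2^{-m-3}$ using $\prod(1-x_k)\geq 1-\sum x_k$. You also spell out two points the paper leaves implicit: that $\pi\in\mathscr{P}_m\subseteq\mathscr{P}_k$ for every $k\geq m$, so (\ref{cond:propagation}) applies at each step, and that positivity comes from $I_{\pi,m}\neq\varnothing$.
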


\begin{proof}
By
\((1-x)(1-y)=1-x-y+xy\geq 1-x-y\) for \(x, y\in [0,1]\),
induction gives
\[
 \prod_{j=m}^{n-1}(1-2^{-j-4})
 \geq
 1-\sum_{j=m}^{n-1}2^{-j-4}
 \geq
 1-\sum_{j=m}^{\infty}2^{-j-4}
 =1-2^{-m-3}>0.
\]
Combining with condition (\ref{cond:propagation}), we get
\begin{align*}
 \frac{\#(I_{\pi,n})}{L_{n}}
 &\geq (1-2^{-n-3})\frac{\#(I_{\pi,n-1})}{L_{n-1}}
 \geq (1-2^{-n-3})(1-2^{-n-2})
       \frac{\#(I_{\pi,n-2})}{L_{n-2}}\\
 &\geq\cdots\geq
 \frac{\#(I_{\pi,m})}{L_{m}}
 \prod_{j=m}^{n-1}(1-2^{-j-4})
 \geq
 \frac{\#(I_{\pi,m})}{L_{m}} (1-2^{-m-3})>0.
\end{align*}
\end{proof}

We now pass from the finite-word construction to the point
generating the required minimal subshift.

\begin{proposition}
\label{prop:minimal-point}
Let the words \(\{A_{m}\}\) be given by
Proposition~\ref{prop:block-construction}. Then there exists a
unique \(\hat{x}=(\hat{x}_{i})_{i\in\Zp}\in\Sigma_{2}\) such that
\begin{equation}
\label{eq:hat-x-prefixes}
 \hat{x}_{0}\hat{x}_{1}\cdots \hat{x}_{L_{m}-1}
 =A_{m}\quad (\forall m\in\N),
\end{equation}
and
\[
{\hat{x}_{0}=1},
 \quad
{\{n\in\N:\hat{x}_{n}=1\}\subseteq G}.
\]
Moreover, $\hat{x}$ is a minimal point of
$\sigma$, i.e., \(X=\overline{\orb({\hat{x}},\sigma)}\)
is a minimal subshift.
\end{proposition}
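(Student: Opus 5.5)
Existence and uniqueness of \(\hat{x}\) come from (\ref{cond:prefix}). Since the lengths \(L_{m}\) strictly increase and each \(A_{m+1}\) begins with \(A_{m}\), the words \(A_{m}\) form a nested chain of prefixes. Their common extension therefore determines every coordinate \(\hat{x}_{i}\) uniquely: one takes any \(m\) with \(L_{m}>i\). Then \(\hat{x}_{0}=(A_{1})_{0}=1\) by (\ref{cond:coordinates}).

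For the support condition, fix \(n\in\N\) with \(\hat{x}_{n}=1\) and choose \(m\) with \(L_{m}>n\). Then \((A_{m})_{n}=1\) with \(1\le n<L_{m}\), so (\ref{cond:coordinates}) gives \((P_{m})_{n}=1\). Since \(n\bmod L_{m}=n\), condition (\ref{cond:support}) gives \(n\in G\).

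Minimality is where the real work is. I will use the standard criterion: \(\hat{x}\) is a minimal point if and only if every word \(u\) occurring in \(\hat{x}\) occurs in \(\hat{x}\) with bounded gaps. This is equivalent to \(N_{\sigma}(\hat{x},\Cyl{\hat{x}_{0}\cdots\hat{x}_{r-1}})\in\Fs\) for all \(r\), by the characterization of minimal points recalled in the preliminaries.

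So fix \(r\) and a level \(m\) with \(L_{m}\geq r\). Then \(\hat{x}_{0}\cdots\hat{x}_{r-1}\) is a prefix of \(A_{m}\), and it suffices to show that \(A_{m}\) occurs syndetically in \(\hat{x}\). By (\ref{cond:concatenation}), \(A_{m}\in\mathscr{W}_{m}\subseteq\Sub(w)\) for every \(w\in\mathscr{W}_{m+1}\). Again by (\ref{cond:concatenation}), iterated, every \(w\in\mathscr{W}_{n}\) for \(n\ge m+1\) lies in \(\mathscr{C}_{\mathscr{W}_{m+1}}\), i.e.\ it is a concatenation of words from \(\mathscr{W}_{m+1}\). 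Each of these blocks has length at most \(L_{m+1}+1\) and contains \(A_{m}\).

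Now \(\hat{x}\) is the limit of the prefixes \(A_{n}\in\mathscr{W}_{n}\). So for every \(n>m+1\), the prefix \(\hat{x}_{0}\cdots\hat{x}_{L_{n}-1}=A_{n}\) is a concatenation of \(\mathscr{W}_{m+1}\)-blocks, each of length at most \(L_{m+1}+1\) and each containing \(A_{m}\). Hence every window of length \(2(L_{m+1}+1)\) lying inside \([0,L_{n}-1]\) contains a complete block, and therefore an occurrence of \(A_{m}\). Letting \(n\to\infty\) shows that the gaps between occurrences of \(A_{m}\) in \(\hat{x}\) are bounded by \(2(L_{m+1}+1)\).

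The one point needing care is consistency of the decompositions across levels. Since \(\mathscr{W}_{n}\subseteq\mathscr{C}_{\mathscr{W}_{n-1}}\subseteq\cdots\), I will argue by induction that \(\mathscr{C}_{\mathscr{W}_{n}}\subseteq\mathscr{C}_{\mathscr{W}_{m+1}}\): a concatenation of concatenations is a concatenation. Once that is in place, syndeticity follows. Hence \(\hat{x}\) is uniformly recurrent, and \(X=\overline{\orb(\hat{x},\sigma)}\) is minimal.
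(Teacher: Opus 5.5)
Your proof is correct and follows essentially the same argument as the paper. Existence and uniqueness come from (C2), and the support claim from (C4) together with (C3). For minimality, each prefix $A_n$ is tiled by $\mathscr{W}_{m+1}$-blocks of length at most $L_{m+1}+1$, each containing $A_m$, so the gaps are at most $2(L_{m+1}+1)$.

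The only difference is in that last step. You run the window argument directly inside the finite prefixes $A_n$ and let $n\to\infty$. The paper instead first uses a diagonal extraction over the finite set $\mathscr{W}_{n+1}$ to get an infinite decomposition $\hat{x}=w_1w_2\cdots$. Your version is slightly more direct because it skips that extraction.
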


\begin{proof}
{Condition (\ref{cond:prefix}) and \(L_{m}\to +\infty\)
give the unique \(\hat{x}\in\Sigma_{2}\) satisfying
\eqref{eq:hat-x-prefixes}.} Condition
(\ref{cond:coordinates}) gives \((A_{m})_{0}=1\) for each
\(m\in \N\), and thus $\hat{x}_{0}=(A_{m})_{0}=1$. {For any
\(n\in\N\), choose \(m\in \N\) such that \(n<L_{m}\).
Then, from condition (\ref{cond:coordinates}),
it follows \({\hat{x}_{n}=(A_{m})_{n}}=(P_{m})_{n},\)
which, together with condition
(\ref{cond:support}), shows that
\(\hat{x}_{n}=1\) implies \(n\in G\).
Therefore}
\({\{n\in\N:\hat{x}_{n}=1\}\subseteq G}.\)

To prove that $\hat{x}$ is a minimal point,
it suffices to check that, for any
\(u=\hat{x}_{0}\hat{x}_{1}\cdots\hat{x}_{n-1}\), \(n\in\N\),
\(N_{\sigma|_{X}}(\hat{x},\Cyl{u}_{X})\)
is syndetic.

Fix any \(n\in\N\). Repeated use of condition
(\ref{cond:concatenation}) yields
\begin{equation}
\label{eq:An-subword-Wn+1}
 A_{n}\in\mathscr{W}_{n}
 \subseteq\bigcap_{w\in\mathscr{W}_{n+1}}\Sub(w)
 \text{ and }
 A_{\ell}\in\mathscr{C}_{\mathscr{W}_{n+1}}(L_{\ell})
 \quad (\forall \ell>n+1),
\end{equation}
implying that, for any \(\ell>n+1\), there exist \(s_{\ell}\in\N\) and
\(w^{(\ell)}_{1},\ldots,w^{(\ell)}_{s_{\ell}}\in\mathscr{W}_{n+1}\)
such that
\(A_{\ell}=w^{(\ell)}_{1}w^{(\ell)}_{2}\cdots w^{(\ell)}_{s_{\ell}}.\)
Moreover, since \(\lvert w^{(\ell)}_{j}\rvert\leq L_{n+1}+1\), we have
\[
 s_{\ell}\geq\frac{L_{\ell}}{L_{n+1}+1}\to +\infty
 \quad
 (\ell\to +\infty).
\]
Then choose a strictly increasing \((\ell_{r})_{r\in \N}\subseteq (n+1, +\infty)\)
such that \(s_{\ell_{r}}\geq r\), and arrange the corresponding decompositions as
\[
\begin{array}{c|ccccc}
 A_{\ell_{1}}&w^{(\ell_{1})}_{1}&\cdots\\
 A_{\ell_{2}}&w^{(\ell_{2})}_{1}&w^{(\ell_{2})}_{2}&\cdots\\
 A_{\ell_{3}}&w^{(\ell_{3})}_{1}&w^{(\ell_{3})}_{2}&w^{(\ell_{3})}_{3}&\cdots\\
 \vdots&\vdots&\vdots&\vdots&\ddots
\end{array}
\]
Since the entries in each column belong to the finite set
\(\mathscr{W}_{n+1}\), there exist subsequences
\((\ell^{(i)}_{r})_{r\in\N}\), \(i\in\N\), of
\((\ell_{r})_{r\in\N}\) satisfying the following conditions:
\begin{itemize}
  \item For any \(i\in\N\),
  \((\ell^{(i+1)}_{r})_{r\in\N}\) is a subsequence of
  \((\ell^{(i)}_{r})_{r\in\N}\);
  \item For any \(i\in\N\), there exists
  \(w_{i}\in\mathscr{W}_{n+1}\) such that
  \(w_{i}^{(\ell^{(i)}_{1})}=w_{i}^{(\ell^{(i)}_{2})}
   =\cdots=w_{i}^{(\ell^{(i)}_{r})}=\cdots=w_{i}.\)
\end{itemize}
In particular, the diagonal sequence \((\ell^{(r)}_{r})_{r\in\N}\) is a subsequence
of \((\ell_{r})_{r\in\N}\) and satisfies
\[
 w^{(\ell^{(r)}_{r})}_{i}=w_{i}
 \quad (1\leq i\leq r),
\]
i.e.,
\[
\begin{array}{c|ccccc}
 A_{\ell^{(1)}_{1}}&w_{1}&\cdots\\
 A_{\ell^{(2)}_{2}}&w_{1}&w_{2}&\cdots\\
 A_{\ell^{(3)}_{3}}&w_{1}&w_{2}&w_{3}&\cdots\\
 \vdots&\vdots&\vdots&\vdots&\ddots
\end{array}
\]
Noting that $A_{\ell^{(r)}_{r}}=\hat{x}_{0}\hat{x}_{1}\cdots
 \hat{x}_{L_{\ell^{(r)}_{r}}-1}$ by \eqref{eq:hat-x-prefixes},
 we obtain
\(\hat{x}=w_{1}w_{2}\cdots.\)
Meanwhile, by \(n\leq L_{n}\), \eqref{eq:hat-x-prefixes} gives
\(u=(A_{n})_{0}(A_{n})_{1}\cdots(A_{n})_{n-1}\).
This, together with $A_{n}\in\bigcap_{w\in\mathscr{W}_{n+1}}\Sub(w)$
(by \eqref{eq:An-subword-Wn+1}) and $w_i\in \mathscr{W}_{n+1}$
($\forall i\in \N$), implies that, for any $i\in \N$, $u\in \Sub(w_i)$.
Since \(|w_{i}|\leq L_{n+1}+1\) for any \(i\in\N\),
\[
 [s,s+2(L_{n+1}+1)]\cap
 N_{\sigma|_{X}}(\hat{x},\Cyl{u}_{X})\neq\varnothing
 \quad (\forall s\in\N).
\]
Thus \(N_{\sigma|_{X}}(\hat{x},\Cyl{u}_{X})\) is syndetic, so
\(\hat{x}\) is minimal.
Consequently \(X\) is a minimal subshift.
\end{proof}

\begin{corollary}
\label{cor:constructed-words-in-language}
For any \(m \in \N\),
\(\mathcal{V}_{m}\subseteq\Lang_{L_{m}}(X).\)
\end{corollary}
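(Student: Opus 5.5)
To prove $\mathcal{V}_{m}\subseteq\Lang_{L_{m}}(X)$, I will show that every word of $\mathcal{V}_{m}$ occurs in $\hat{x}$. Since $X=\overline{\orb(\hat{x},\sigma)}$, any word occurring in $\hat{x}$ is the initial block of some $\sigma^{j}(\hat{x})\in X$. Such a word therefore belongs to $\Lang(X)$, and since each element of $\mathcal{V}_{m}$ has length $L_{m}$, it lies in $\Lang_{L_{m}}(X)$.

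To see that every $v\in\mathcal{V}_{m}$ occurs in $\hat{x}$, I use condition (\ref{cond:concatenation}) at level $m$. It gives $\mathscr{W}_{m}\subseteq\bigcap_{w\in\mathscr{W}_{m+1}}\Sub(w)$. Since $A_{m+1}\in\mathscr{W}_{m+1}$, every word of $\mathscr{W}_{m}=\mathcal{V}_{m}\cup\{P_{m},A_{m},B_{m}\}$, in particular every $v\in\mathcal{V}_{m}$, is a subword of $A_{m+1}$. By \eqref{eq:hat-x-prefixes}, $A_{m+1}=\hat{x}_{0}\cdots\hat{x}_{L_{m+1}-1}$. Hence each $v\in\mathcal{V}_{m}$ occurs at some position $j\le L_{m+1}-L_{m}$ of $\hat{x}$, that is, $\sigma^{j}(\hat{x})\in\Cyl{v}_{X}$. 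So $v\in\Lang_{L_{m}}(X)$.

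The only thing to check is that condition (\ref{cond:concatenation}) applies for every $m\in\N$, including $m=1$, where $\mathcal{V}_{1}=\{0,1\}^{4}$. In the construction, $K_{1}$ is the concatenation of all words of $\mathscr{W}_{1}$, and $K_{1}$ is a subword of $A_{2}$. So all sixteen words of length $4$ occur in $\hat{x}$, and condition (\ref{cond:concatenation}) does cover this case. There is no real obstacle; the argument is essentially immediate from (\ref{cond:concatenation}) together with \eqref{eq:hat-x-prefixes}.
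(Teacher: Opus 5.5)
Your proof is correct and takes the same route as the paper: (C1) at level $m$, applied to $A_{m+1}\in\mathscr{W}_{m+1}$, shows that each $w\in\mathcal{V}_{m}$ is a subword of $A_{m+1}$, and \eqref{eq:hat-x-prefixes} identifies $A_{m+1}$ with a prefix of $\hat{x}$. Your separate check of the case $m=1$ is accurate, since $K_{1}$ occurs in $A_{2}$, but it is not needed, because the proposition already states (C1) for every $m\in\N$.
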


\begin{proof}
For any \(w\in\mathcal{V}_{m}\subseteq
\mathscr{W}_{m}\), condition
(\ref{cond:concatenation}), applied to
\(A_{m+1}\in\mathscr{W}_{m+1}\), gives
\(w\in\Sub(A_{m+1})\). This, together with
\eqref{eq:hat-x-prefixes}, yields
\(\mathcal{V}_{m}\subseteq \Lang_{L_{m}}(X)\).
\end{proof}

\begin{proposition}
\label{prop:upe}
The TDS \((X,\sigma|_{X})\) in Proposition
\ref{prop:minimal-point} has u.p.e.
\end{proposition}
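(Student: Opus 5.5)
We must show every pair of distinct points $(y_1,y_2)$ of $X$ is an entropy pair. We reduce to words: fix disjoint closed neighborhoods $U_1,U_2$ of $y_1,y_2$ in $X$. Shrinking them, we may take $U_1=\Cyl{u}_X$ and $U_2=\Cyl{v}_X$, where $u\neq v$ are words of the same length $p$. Since $X$ is minimal and generated by $\hat{x}$, both $u$ and $v$ occur in $\hat{x}$. By \eqref{eq:hat-x-prefixes} they therefore occur in $A_{m_0}$ for some $m_0\geq p$. By (\ref{cond:pairs}) the pair $\pi=\{u,v\}$ lies in $\mathscr{P}_{m_0+1}$, and by \eqref{eq:P-m-and-Ipim} it lies in $\mathscr{P}_m$ for all $m\geq m_0+1$. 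Put $m_1=\max\{m_0+1,2\}$ and $\theta=\#(I_{\pi,m_1})/L_{m_1}>0$. By Corollary~\ref{cor:propagation-density}, $\#(I_{\pi,n})\geq \tfrac12\theta L_n$ for every $n\geq m_1$.

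Next we derive an entropy lower bound for the cover $\mathcal{U}=\{X\setminus U_1,X\setminus U_2\}$. Fix $n\geq m_1$. By (\ref{cond:independence}) and Corollary~\ref{cor:constructed-words-in-language}, for every map $\eta\colon I_{\pi,n}\to\{u,v\}$ there is a word $w^\eta\in\Lang_{L_n}(X)$ with $w^\eta_i\cdots w^\eta_{i+p-1}=\eta(i)$ for $i\in I_{\pi,n}$. Choose a point $x^\eta\in\Cyl{w^\eta}_X$. An element of $\bigvee_{j=0}^{L_n-1}\sigma^{-j}\mathcal{U}$ has the form $\bigcap_j\sigma^{-j}(X\setminus U_{e_j})$. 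If it contains $x^\eta$, then $e_i$ is forced to be the index opposite to $\eta(i)$ for every $i\in I_{\pi,n}$: membership $\sigma^i x^\eta\in U_{\eta(i)}$ excludes $X\setminus U_{\eta(i)}$. Hence each element of the join contains $x^\eta$ for at most one $\eta$, and
\[
N\Bigl(\bigvee_{j=0}^{L_n-1}\sigma^{-j}\mathcal{U}\Bigr)\geq 2^{\#(I_{\pi,n})}\geq 2^{\theta L_n/2}.
\]
Since $h_{\mathrm{top}}(\sigma|_X,\mathcal{U})$ is a limit, we may evaluate it along $L_n\to\infty$. This gives $h_{\mathrm{top}}(\sigma|_X,\mathcal{U})\geq \tfrac{\theta}{2}\log 2>0$.

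Finally we pass from the cylinders back to arbitrary neighborhoods. For general disjoint closed neighborhoods $U_1',U_2'$ of $y_1,y_2$, choose cylinder neighborhoods $U_i\subseteq U_i'$ as above. Then $\{X\setminus U_1',X\setminus U_2'\}\succcurlyeq\{X\setminus U_1,X\setminus U_2\}$, so by monotonicity its entropy is at least the positive bound just obtained. Thus $(y_1,y_2)\in E_2(X,\sigma|_X)$ for every off-diagonal pair, which is u.p.e.

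The step I expect to need the most care is the cylinder reduction in the first paragraph. The neighborhoods must be cylinders of a common length $p$ with $u\neq v$, and in the one-sided shift they must genuinely be sub-neighborhoods. Taking $p$ large enough that the $p$-prefixes of $y_1$ and $y_2$ differ and the cylinders sit inside $U_1',U_2'$ handles this. We also need $u,v\in\Sub(A_{m_0})$ with $m_0\geq p$, which is exactly what (\ref{cond:pairs}) requires. The counting argument itself is routine, since (\ref{cond:independence}) provides full independence on $I_{\pi,n}$.
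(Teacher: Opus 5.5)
Your proposal is correct and follows essentially the same route as the paper. Both arguments reduce to disjoint cylinders $[u]_X,[v]_X$ and place $\{u,v\}$ in some $\mathscr{P}_{m_1}$ via (\ref{cond:pairs}). Both then use (\ref{cond:independence}) with Corollary~\ref{cor:constructed-words-in-language} to produce $2^{\#(I_{\{u,v\},n})}$ points, each element of the join containing at most one of them, and conclude with Corollary~\ref{cor:propagation-density} and monotonicity of cover entropy under refinement. The differences are cosmetic: you take $m_0\geq p$ with $u,v\in\Sub(A_{m_0})$ directly rather than passing from $A_r$ to $A_{m_1-1}$ via (\ref{cond:prefix}), and you use the cruder lower bound $\theta/2$.
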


\begin{proof}
For any \((p,q)\in X\times X\setminus \{(x, x): x\in X\}\),
and any disjoint closed neighborhoods \(U\) and \(V\) of
\(p\) and \(q\), there exist two distinct words \(u\), \(v\)
with $|u|=|v|\geq 1$ such that \(p\in\Cyl{u}_{X}\subseteq U\) and
\(q\in\Cyl{v}_{X}\subseteq V.\) By
$X=\overline{\orb({\hat{x}},\sigma)}$ and \eqref{eq:hat-x-prefixes},
there exists an \(r\in \N\)
such that \(u, v\in \Sub(A_{r})\).
Choose \(m_{1}=\max\{r,|u|\}+1\).
From condition (\ref{cond:prefix}), it follows
\((A_{m_{1}-1})_{0}(A_{m_{1}-1})_{1}\cdots(A_{m_{1}-1})_{L_{r}-1}=A_{r},\)
so condition (\ref{cond:pairs}) gives
\({\{u,v\}\in\mathscr{P}_{m_{1}}}.\)
Clearly \(\#(I_{\{u,v\},n})\geq 1\) for each \(n\geq m_{1}\) by
\eqref{eq:P-m-and-Ipim}.

Consider the open cover
\(\mathcal{U}_{u,v}=\{X\setminus\Cyl{u}_{X}, X\setminus\Cyl{v}_{X}\}.\)
Given any fixed \(n \geq m_{1}\),
for each map \({\eta}\colon I_{{\{u,v\}},n}\to\{u,v\},\)
by condition (\ref{cond:independence}), there exists
\(w_{{\eta}}\in\mathcal{V}_{n}\) satisfying
\begin{equation}
\label{eq:w-eta=eta}
 (w_{{\eta}})_{i}
 (w_{{\eta}})_{i+1}\cdots
 (w_{{\eta}})_{i+|u|-1}
 ={\eta}(i)
 \quad (i\in I_{\{u, v\}, n}).
\end{equation}
By Corollary~\ref{cor:constructed-words-in-language} and
\(\sigma(X)\subseteq X\),
there exists
\(z_{{\eta}}\in X\) such that
\begin{equation}
\label{eq:z-eta=w-eta}
 (z_{{\eta}})_{0}(z_{{\eta}})_{1}\cdots
 (z_{{\eta}})_{L_{n}-1}=w_{{\eta}}.
\end{equation}
For any \(\eta, \eta': I_{{\{u,v\}},n}\to\{u,v\}\)
with \({\eta}\neq {\eta}'\), there exists some
\(i\in I_{{\{u,v\}},n}\) such that
\((w_{{\eta}})_{i}(w_{{\eta}})_{i+1}\)
 \(\cdots(w_{{\eta}})_{i+|u|-1}
 ={\eta}(i)\neq {\eta}'(i)
 =(w_{{\eta}'})_{i}
 (w_{{\eta}'})_{i+1}\cdots
 (w_{{\eta}'})_{i+|u|-1},\)
implying $w_{\eta}\neq w_{\eta'}$,
and thus $z_{\eta}\neq z_{\eta'}$.
Consequently,
\[
 \#(\{z_{{\eta}}:
 \eta\in \{u, v\}^{I_{\{u, v\},n}}\})
 =\#(\{u, v\}^{I_{\{u, v\},n}})
 =2^{\#(I_{\{u, v\}, n})}.
\]

On the other hand, by \eqref{eq:w-eta=eta} and
\eqref{eq:z-eta=w-eta},
we have that, for any \(i\in I_{{\{u,v\}},n}\),
\[
 \begin{cases}
 z_{{\eta}}\in\sigma^{-i}(X\setminus\Cyl{u}_{X})
 \Longleftrightarrow \sigma^{i}(z_{{\eta}})\in X\setminus \Cyl{u}_{X}
 \Longleftrightarrow \sigma^{i}(z_{{\eta}})\in \Cyl{v}_{X}
 \Longleftrightarrow \eta(i)=v,\\
 z_{{\eta}}\in\sigma^{-i}(X\setminus\Cyl{v}_{X})
 \Longleftrightarrow \sigma^{i}(z_{{\eta}})\in X\setminus \Cyl{v}_{X}
 \Longleftrightarrow \sigma^{i}(z_{{\eta}})\in \Cyl{u}_{X}
 \Longleftrightarrow \eta(i)=u,
 \end{cases}
\]
implying
\(\#(A\cap \{z_{{\eta}}:
 \eta\in \{u,v\}^{I_{\{u,v\},n}}\}) \leq 1\)
 (\(\forall A\in\bigvee_{i\in I_{\{u, v\}, n}}
 \sigma^{-i}\mathcal{U}_{u,v}\)),
and thus
\[
 N\Bigg(
 \bigvee_{i\in I_{{\{u,v\}},n}}\sigma^{-i}\mathcal{U}_{u,v}
 \Bigg) \geq \#(\{z_{{\eta}}:
 \eta\in \{u,v\}^{I_{\{u,v\},n}}\})
 = 2^{\#(I_{{\{u,v\}},n})}.
\]
Since
\(\bigvee_{i=0}^{L_{n}-1}\sigma^{-i}\mathcal{U}_{u,v}\) refines
{\(\bigvee_{i\in I_{\{u,v\},n}}
\sigma^{-i}\mathcal{U}_{u,v}\)} by
$I_{\{u,v\}, n}\subseteq
 \{0, \ldots, L_{n}-1\}$,
this, together with
Corollary~\ref{cor:propagation-density},
implies
\begin{equation}
\label{eq:cover-lower-bound}
 N\left(
 \bigvee_{i=0}^{L_{n}-1}\sigma^{-i}\mathcal{U}_{u,v}
 \right)\geq 2^{\#(I_{\{u, v\}, n})}
\geq
 2^{\frac{\#(I_{\{u, v\}, m_{1}})}{L_{m_{1}}}
 (1-2^{-{m_{1}}-3}) L_{n}}
 \geq 2^{\frac{(1-2^{-{m_{1}}-3})L_{n}}{L_{m_{1}}}}
 \quad (\forall n \geq m_{1}).
\end{equation}
Again, by the fact that \(\{X\setminus U, X\setminus V\}\)
refines \(\mathcal{U}_{u,v}\), we get
\begin{equation*}
%%\label{eq:positive-cover-entropy}
\begin{aligned}
 & h_{\mathrm{top}}(\sigma|_{X},\{X\setminus U, X\setminus V\})
 \geq h_{\mathrm{top}}(\sigma|_{X}, \mathcal{U}_{u,v})
 =
 \lim_{n\to\infty}\frac{1}{n}
 \log {N}\left(
 \bigvee_{i=0}^{n-1}\sigma^{-i}\mathcal{U}_{u,v}
 \right)\\
 & \quad =
 \lim_{n\to\infty}\frac{1}{L_{n}}
 \log {N}\left(
 \bigvee_{i=0}^{L_{n}-1}\sigma^{-i}\mathcal{U}_{u,v}
 \right)\geq
 \frac{1-2^{-{m_{1}}-3}}{L_{{m_{1}}}}
 \log 2> 0.
\end{aligned}
\end{equation*}

Thus \((p,q)\) is an entropy pair, and hence
\((X,\sigma|_{X})\) has u.p.e.
\end{proof}

\begin{theorem}
\label{thm:upe-system}
There exists \({\hat{x}}\in \Sigma_{2}\) satisfying
the following conditions:
\begin{enumerate}
\renewcommand{\labelenumi}{\textup{(\roman{enumi})}}
\renewcommand{\theenumi}{\roman{enumi}}
\item\label{item:upe-support} \({{\hat{x}_{0}}=1}\) and
\({\{n\in\N: {\hat{x}_{n}}=1\}\subseteq G}\);
\item\label{item:upe-minimal} The TDS \((X,\sigma|_{X})\) is minimal and has u.p.e.,
where \(X=\overline{\orb({\hat{x}},\sigma)}\).
\end{enumerate}
\end{theorem}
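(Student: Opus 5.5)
This theorem is essentially a packaging of the two preceding propositions, so the plan is to take \(\hat{x}\) to be the point produced by Proposition~\ref{prop:minimal-point}, built from the words \(\{A_m\}\) of Proposition~\ref{prop:block-construction}, and read off both items.

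For item~(\ref{item:upe-support}), Proposition~\ref{prop:minimal-point} already states \(\hat{x}_0=1\) and \(\{n\in\N:\hat{x}_n=1\}\subseteq G\). Recall the reasoning behind it: \(\hat{x}\) has every \(A_m\) as a prefix by \eqref{eq:hat-x-prefixes}. Then (\ref{cond:coordinates}) gives \((A_m)_0=1\) and \((A_m)_n=(P_m)_n\) for \(1\le n<L_m\), and (\ref{cond:support}) places the \(1\)'s of \(P_m\) inside \(G\). I would cite this directly rather than re-derive it.

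For item~(\ref{item:upe-minimal}), minimality of \(X=\overline{\orb(\hat{x},\sigma)}\) is the second half of Proposition~\ref{prop:minimal-point}. That argument writes \(\hat{x}\) as an infinite concatenation of words of \(\mathscr{W}_{n+1}\), each of length at most \(L_{n+1}+1\) and each containing \(A_n\) by (\ref{cond:concatenation}). This gives syndetic returns to every prefix cylinder. Uniform positive entropy is exactly Proposition~\ref{prop:upe}. Its proof goes as follows. Any off-diagonal pair \((p,q)\) is separated by cylinders \([u]_X,[v]_X\) with \(u\ne v\) and \(|u|=|v|\), both occurring in some \(A_r\). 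By (\ref{cond:prefix}) and (\ref{cond:pairs}) the pair \(\{u,v\}\) lies in \(\mathscr{P}_{m_1}\). The independence property (\ref{cond:independence}) then yields \(2^{\#I_{\{u,v\},n}}\) distinct points of \(X\), which are words of \(\mathcal{V}_n\subseteq\Lang_{L_n}(X)\) by Corollary~\ref{cor:constructed-words-in-language}. Corollary~\ref{cor:propagation-density} keeps \(\#I_{\{u,v\},n}/L_n\) bounded below, so the cover \(\{X\setminus U,X\setminus V\}\) has positive entropy.

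Since every ingredient is in place, the proof is a short assembly. The only point needing care, and the place I would double-check, is that the \(\hat{x}\) in Proposition~\ref{prop:upe} is the same unique point as in Proposition~\ref{prop:minimal-point}. This holds because \(X\) in Proposition~\ref{prop:upe} is defined as that orbit closure. I would also confirm that \(\mathcal{V}_n\subseteq\Lang(X)\) uses \(A_{n+1}\) as a prefix of \(\hat{x}\), which (\ref{cond:concatenation}) together with \eqref{eq:hat-x-prefixes} supplies.
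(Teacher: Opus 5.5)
Your proposal is correct and takes the same route as the paper. The paper proves this theorem by citing Propositions~\ref{prop:block-construction}, \ref{prop:minimal-point}, and \ref{prop:upe} directly. Your account of how item~(i), minimality, and u.p.e.\ follow from (C1)--(C7) and Corollaries~\ref{cor:propagation-density} and \ref{cor:constructed-words-in-language} is accurate, as is your check that the $X$ in Proposition~\ref{prop:upe} is the orbit closure of the same $\hat{x}$.
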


\begin{proof}
It follows directly from Propositions~\ref{prop:block-construction},
\ref{prop:minimal-point}, and \ref{prop:upe}.
\end{proof}

\begin{remark}
It follows from \cite[Theorem~3.2~\textup{(1)}]{GlasnerYeLocal} and
\cite[Theorem~3.6]{HuangShaoYeProximal} that every minimal TDS with
u.p.e.\ has no distal points. In particular, the system
\((X, \sigma|_{X})\) has no distal points.
\end{remark}

We conclude by relating this minimal u.p.e.\ subshift to
the zero-entropy subshift constructed in
Subsection~\ref{sec:zero-entropy}.

\begin{theorem}
\label{thm:X-not-perp-Y}
\((X,\sigma|_{X})\not\perp(Y,\sigma|_{Y}).\)
\end{theorem}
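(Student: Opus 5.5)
The plan is to exhibit a proper joining of \((X,\sigma|_{X})\) and \((Y,\sigma|_{Y})\), built from the orbit of the pair \((\hat{x},a)\). Let \(J=\overline{\orb((\hat{x},a),\sigma\times\sigma)}\subseteq X\times Y\). This set is nonempty, closed and \(\sigma\times\sigma\)-invariant. Its first projection is \(\overline{\orb(\hat{x},\sigma)}=X\) and its second projection is \(\overline{\orb(a,\sigma)}=Y\), since projections commute with taking orbit closures in compact spaces. Hence \(J\) is a joining, and it remains to show that \(J\neq X\times Y\).

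For properness I use the open set \(\Cyl{1}_{X}\times\Cyl{1}_{Y}\), which is nonempty in \(X\times Y\) because \(\hat{x}_{0}=1\) and \(a_{0}=1\). Suppose \(n\in\Zp\) satisfies \((\sigma^{n}\hat{x},\sigma^{n}a)\in\Cyl{1}\times\Cyl{1}\), that is, \(\hat{x}_{n}=1\) and \(a_{n}=1\).

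\begin{itemize}
\item If \(n\geq1\), Theorem~\ref{thm:upe-system}~(\ref{item:upe-support}) gives \(n\in G\). But \(a_{n}=1\) means \(n\in\Ascr=\Zp\setminus G\), which is a contradiction.
\item If \(n=0\), the point \((\hat{x},a)\) itself lies in \(\Cyl{1}_{X}\times\Cyl{1}_{Y}\).
\end{itemize}

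So the orbit meets this open set only at time \(0\), and the case \(n=0\) needs more care. Note that \(0\notin G\), since \(\min G_{k}=q_{k}/2>0\), so \(a_{0}=1\) is consistent.

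To handle time \(0\), I instead use the open set \(O=\Cyl{\hat{x}_{0}\cdots\hat{x}_{\ell}}_{X}\times\Cyl{1}_{Y}\) for some \(\ell\). This set is nonempty and contains \((\hat{x},a)\). The point \((\hat{x},a)\) is isolated in its orbit from \(O\) except at time \(0\), so \(J\cap O=\{(\hat{x},a)\}\cup(\text{limit points})\). Limit points of \(\{(\sigma^{n}\hat{x},\sigma^{n}a):n\geq1\}\) lie in the closed set \(\Cyl{1}\times\Cyl{1}\) only if infinitely many \(n\geq1\) land there, and none do. Hence \(J\cap(\Cyl{1}_{X}\times\Cyl{1}_{Y})=\{(\hat{x},a)\}\).

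It then suffices to show that \(\Cyl{1}_{X}\times\Cyl{1}_{Y}\) contains at least two points of \(X\times Y\). Since \(X\) is an infinite minimal subshift (it has positive entropy), \(\Cyl{1}_{X}\) is an infinite set, because minimality gives \(\sigma^{n}\hat{x}\in\Cyl{1}_{X}\) for syndetically many \(n\), and these points are distinct because \(X\) is infinite. So there is \(x'\in\Cyl{1}_{X}\) with \(x'\neq\hat{x}\), and \((x',a)\in X\times Y\setminus J\). Therefore \(J\) is a proper joining and \((X,\sigma|_{X})\not\perp(Y,\sigma|_{Y})\).

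The main point requiring care is this isolation argument at time \(0\), in particular checking that \(\Cyl{1}_{X}\times\Cyl{1}_{Y}\) is not the single point \((\hat{x},a)\). Everything else follows directly from \(N_{\sigma}(a,\Cyl{1})=\Ascr\cap\N\) (Theorem~\ref{thm:Y-system}) and Theorem~\ref{thm:upe-system}(\ref{item:upe-support}).
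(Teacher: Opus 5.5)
Your proof is correct, but it uses a different joining from the paper's.

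\textbf{The paper's route.} The paper takes $J=\omega_{\sigma\times\sigma}((\hat{x},a))$. No point $(\sigma\times\sigma)^{n}(\hat{x},a)$ with $n\geq 1$ lies in the clopen box $\Cyl{1}_{X}\times\Cyl{1}_{Y}$, so the $\omega$-limit set misses the box entirely and properness is immediate. The price is checking that both projections are onto. As in Proposition~\ref{prop:product-recurrence}, this needs that $\hat{x}$ is minimal and that $a$ is recurrent (Lemma~\ref{lem:a-recurrent}).

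\textbf{Your route.} You take the full orbit closure. Surjectivity of its projections is automatic, and no recurrence of $a$ is used. The cost is that your $J$ contains $(\hat{x},a)$ itself, which lies in the box. You handle this correctly: you show $J\cap(\Cyl{1}_{X}\times\Cyl{1}_{Y})=\{(\hat{x},a)\}$, and then that the box contains another point because $X$ is an infinite minimal subshift. In other words, $(\hat{x},a)$ is not isolated in $X\times Y$.

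\textbf{Convention caveat.} Since $(\hat{x},a)$ is not recurrent, your $J$ satisfies only $(\sigma\times\sigma)(J)\subsetneq J$. So your argument relies on joinings being required merely to be forward invariant. That is the convention in force in this paper: subsystems are defined through $\sigma(X_1)\subseteq X_1$, and the remark that disjoint systems must be surjective depends on it. Hence there is no gap. Still, the paper's $\omega$-limit set satisfies $(\sigma\times\sigma)(J)=J$, so its argument works under either convention.

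\textbf{Minor points.} The detour through the set $O$ is never used and can be deleted. Also, limit points of $\{(\sigma^{n}\hat{x},\sigma^{n}a):n\geq 1\}$ avoid the box because the box is open, so a single visit would already give a contradiction. It is not because the box is closed.
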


\begin{proof}
Let
\(J=\omega_{\sigma\times\sigma}((\hat{x},a)).\)
Similar to the argument in the proof of
Proposition~\ref{prop:product-recurrence}, it is easy to see that
\(J\) is a joining of \((X,\sigma|_{X})\) and \((Y,\sigma|_{Y})\).
By Theorem~\ref{thm:Y-system}~\textup{(\ref{item:Y-return-set})}
and Theorem~\ref{thm:upe-system}~\textup{(\ref{item:upe-support})},
we have
\[
 \{(\sigma\times\sigma)^{n}({\hat{x}},a)
 : n\in \N\}\subseteq (X\times Y)\setminus
 (\Cyl{1}_{X}\times\Cyl{1}_{Y}).
\]
Noting that \(\Cyl{1}_{X}\times\Cyl{1}_{Y}\)
is a nonempty clopen subset of \(X\times Y\),
we get
\[
J=\omega_{\sigma\times\sigma}((\hat{x},a))
\subseteq \overline{\{(\sigma\times\sigma)^{n}({\hat{x}},a)
 : n\in \N\}}\subseteq (X\times Y)\setminus
 (\Cyl{1}_{X}\times\Cyl{1}_{Y})\subsetneq X\times Y,
\] so \(J\neq X\times Y\). Hence
\((X,\sigma|_{X})\not\perp(Y,\sigma|_{Y}).\)
\end{proof}

\subsection[Proofs of Theorems~\ref{thm:disjointness} and \ref{thm:main}]
{Proofs of Theorems~\ref{thm:disjointness} and \ref{thm:main}}
\label{Sec:4}

\begin{proof}[Proof of Theorem~\ref{thm:disjointness}]
Let \(X\), \(\hat{x}\), \(Y\), and \(a\) be given by
Theorems~\ref{thm:Y-system} and \ref{thm:upe-system}.
By Theorem~\ref{thm:upe-system} \textup{(\ref{item:upe-minimal})},
\((X,\sigma|_{X})\) is minimal and has u.p.e., and hence is a minimal
diagonal TDS. This, together with Lemma~\ref{thm:HPY}, implies
\((X,\sigma|_{X})\in\Mzero^{\perp}.\)

Meanwhile, Theorem~\ref{thm:Y-system}~\textup{(\ref{item:Y-class})}
gives \((Y,\sigma|_{Y})\in\Ezero\). Together with
Theorem~\ref{thm:X-not-perp-Y}, this implies
\((X,\sigma|_{X})\notin \Ezero^{\perp}\).
Therefore $(X, \sigma|_{X})\in \Mzero^{\perp}
\setminus \Ezero^{\perp}$.
\end{proof}

\begin{proof}[Proof of Theorem~\ref{thm:main}]
Let \({\hat{x}}\) and
\(X=\overline{\orb({\hat{x}},\sigma)}\)
be given by Theorem~\ref{thm:upe-system}. Since
\((X,\sigma|_{X})\) is minimal and has u.p.e., it is a
minimal diagonal TDS.  This, together with
Proposition~\ref{prop:product-recurrence}, yields
${\hat{x}} \in \Fps \PRzero.$

Let \(a\in Y\) be the point defined in Subsection~\ref{sec:zero-entropy}.
By Lemmas~\ref{lem:zero-entropy} and
\ref{lem:a-recurrent}, \(a\) is an \(\Fpubd\)-recurrent point in the
zero-entropy TDS \((Y,\sigma|_{Y})\). For the open neighborhood
\(\Cyl{1}_{X}\times \Cyl{1}_{Y}\) of $({\hat{x}}, a)$,
Theorem~\ref{thm:Y-system}~\textup{(\ref{item:Y-return-set})}
and Theorem~\ref{thm:upe-system}~\textup{(\ref{item:upe-support})} give
\[
 N_{(\sigma|_{X})\times(\sigma|_{Y})}(({\hat{x}},a),
 \Cyl{1}_{X}\times \Cyl{1}_{Y})
 =N_{\sigma|_{X}}({\hat{x}},\Cyl{1}_{X})
 \cap N_{\sigma|_{Y}}(a, \Cyl{1}_{Y})
 \subseteq (G\cap \N)\cap (H\cap \N)
 =\varnothing,
\]
implying \(({\hat{x}},a)\notin
\mathrm{Rec}(X\times Y,(\sigma|_{X})\times(\sigma|_{Y}))\),
and thus \({\hat{x}}\notin\Fpubd\PRzero\).

Therefore
\({\hat{x}}\in\Fps\PRzero\setminus
\Fpubd\PRzero.\)
\end{proof}

\end{document}